\documentclass[reqno,12pt]{amsart}
\usepackage{bbm}
\usepackage[all,pdf]{xy}
\usepackage{epsfig}
\usepackage{amsmath}
\usepackage{amssymb}
\usepackage{amscd}
\usepackage{graphicx}
\usepackage{enumerate}
\usepackage{comment}
\usepackage[colorlinks=true]{hyperref}
\hypersetup{urlcolor=red, citecolor=blue}
\makeatletter
\@namedef{subjclassname@2020}{%
	\textup{2020} Mathematics Subject Classification}

\topmargin=0pt

\oddsidemargin=0pt

\evensidemargin=0pt

\textwidth=15cm

\textheight=22cm

\raggedbottom

\overfullrule5pt

\newtheorem{thm}{Theorem}[section]
\newtheorem{lem}[thm]{Lemma}

\newtheorem{cl}[thm]{Claim}
\newtheorem{prop}[thm]{Proposition}

\newtheorem{cor}[thm]{Corollary}
\newtheorem{defn}[thm]{Definition}

\newtheorem{rem}[thm]{Remark}

\newtheorem{conj}[thm]{Conjecture}

\def \N {\mathbb N}

\def \Z {\mathbb Z}

\def\A {\mathcal A}

\def\V {\mathcal V}

\def\X {\mathcal{X}}
\def\F {\mathcal{F}}
\parskip 1.0ex
\numberwithin{equation}{section}

\def\*{(*)}

\begin{document}
	\title[Independence  and mean sensitivity]{Independence and mean sensitivity in minimal systems under group actions}
	
	\author{Chunlin Liu}
	\address[C. Liu]{School of Mathematical Sciences, Dalian University of Technology, Dalian, 116024, P.R. China}
	\email{chunlinliu@mail.ustc.edu.cn}	
	
	\author{Leiye Xu}
	\address[L. Xu]{School of Mathematical Sciences, University of Science and Technology of China, Hefei, Anhui, 230026, P.R. China}
	\email{leoasa@mail.ustc.edu.cn}
	
	\author{Shuhao Zhang}
	\address[S. Zhang]{School of Mathematical Sciences, University of Science and Technology of China, Hefei, Anhui, 230026, P.R. China}
	\email{yichen12@mail.ustc.edu.cn}

	\subjclass[2020]{Primary: 37B05, 37A15}

	\keywords{Independence set, IT-tuples, regular extension, mean sensitivity}
	
	\thanks{}
	\begin{abstract}
		 In this paper, we mainly study the relation between regularity, independence  and mean sensitivity for minimal systems.
		In the first part, we show that if a minimal system is incontractible, or  local Bronstein with an invariant Borel probability measure, then the regularity  is strictly bounded by the infinite independence. In particular, the following two types of minimal systems are applicable to our result:
		\begin{enumerate}
			\item The acting group of the minimal system is a virtually nilpotent group.
			\item The minimal system is a proximal extension  of its  maximal equicontinuous factor and admits an invariant Borel probability measure. 
		\end{enumerate}
		Items~(1) and~(2) correspond to Conjectures~1 and~2 from Huang, Lian, Shao, and Ye (J.~Funct.~Anal., 2021); item~(1) verifies Conjecture~1 in the virtually nilpotent case, and item~(2) gives an affirmative answer to Conjecture~2.
		
		\medskip
		
		In the second part,	for a minimal system acting by an amenable group, under the local Bronstein condition, we  establish  parallel results regarding weak mean sensitivity and establish that every mean-sensitive tuple is an IT-tuple. 
	\end{abstract}
	
	\maketitle
	\section{Introduction}
	\subsection{Background}
	Throughout this paper, we assume that $G$ is an infinite countable discrete group. By a topological dynamical system (abbreviated as tds), we mean a pair $(X, G)$, where $G$ acts on a compact metric space $X$ by homeomorphisms. A (Borel) probability measure $\mu$ on $X$ is said to be ($G$-)invariant if $\mu(gA) = \mu(A)$ for all $g \in G$ and all Borel subsets $A$ of  $ X$ and it is ergodic if $\mu$ is invariant and $\mu\left(\cup_{g \in G} gA\right) \in \{0, 1\}$ for all Borel subsets $A $ of $ X$.

	In \cite{B1}, Blanchard introduced the concept of entropy pairs as a tool to identify where entropy resides within a tds, and demonstrated that a tds has positive entropy if and only if it possesses essential entropy pairs. This approach, now widely known as local entropy theory (see \cite{GH, GY} for surveys), has become a valuable tool of entropy theory.
	Building on this foundation, Blanchard et al. \cite{B2} extended the concept by defining entropy pairs for invariant probability measures. Glasner and Weiss \cite{GW} further developed the notion of entropy tuples, while Huang and Ye \cite{HY} introduced entropy tuples  for invariant  probability measures.
	Concurrently, the study of sequence entropy tuples and tame systems evolved significantly \cite{G1, G2, H1, HLSY1, Q,QY}, alongside the development of a theory of local complexity \cite{BHM, HMY, HY2}.

	Kerr and Li extensively studied independence in \cite{KL3, KH}, uncovering its connection to (sequence) entropy tuples and tameness (i.e., the enveloping semigroup of this system is separable and Fr\'echet \cite{G3}). The concept of independence sets was introduced in \cite{KH} for a tds $(X,G)$ as follows. Given $K\in\N$, for a tuple $\mathcal{A}=(A_1,\cdots, A_K)$ of subsets of $X$,
	we say that a set $J\subset G$ is an independence set for $\A$ if for every nonempty finite subset
	$I\subset J$ and any function $\sigma : I\to  \{1, 2,\cdots, K\}$, we have
	$$\cap_{g\in I}g^{-1}A_{\sigma(g)}\neq\emptyset.$$
	A $K$-tuple $ (x_k)_{k=1}^K\in X^K$ is a $K$-IT-tuple (IT-pair for $K=2$) if 
	for any open
	neighborhood $U_k$ of $x_k$, the tuple $(U_1,U_2,\cdots ,U_K)$ has an infinite independence set. If in addition, $x_i\neq x_j$ for any $i\neq j$, then  $ (x_k)_{k=1}^K$ is called an essential $K$-IT-tuple. Recently, Gómez, León-Torres and  Muñoz-López \cite{GO2025} constructed minimal subshifts that have essential $K$-IT-tuples but do not have  essential $(K+1)$-IT-tuples.
	
	Let  $\pi_{eq}: X\to X_{eq}$ be the factor
	map to the maximal equicontinuous factor of $(X,G)$.  Here the existence of $\pi_{eq}$  can be obtained in	a constructive way via the regionally proximal equivalence relation \cite{Veech}. 	For $K\in\N\cup\{\infty\}$,  $\pi_{eq}$ is said to be almost $K$ to one if $\{y\in X_{eq} :|\pi_{eq}^{-1}(y)|=K\}$ is a residual subset of $X_{eq}$. 	 If $(X,G)$ is minimal, then $(X_{eq},G)$ is also minimal. Consequently, there exists a unique invariant measure $\nu_{eq}$ on $X_{eq}$.
	In this case, $\pi_{eq}$ is said to be  regular $K$ to one if $\nu_{eq}(\{y\in X_{eq}:|\pi_{eq}^{-1}(y)|=K\})=1$.
	
	It is intriguing that the cardinality of the fibers of the maximal equicontinuous factor map $\pi_{{eq}}$ is closely related to the existence of sequence entropy tuples or IT-tuples. Huang, Li, Shao, and Ye~\cite{HLSY1} showed that for a minimal $\mathbb{Z}$-tds admitting no sequence entropy pairs, the factor map $\pi_{{eq}}$ is almost one to one. Garc\'{\i}a-Ramos~\cite{Felipe} further strengthened this result by proving that $\pi_{{eq}}$ is regular one to one.
	For minimal tame (i.e., it has no essential IT-pairs) actions of abelian groups, the almost one to one property of $\pi_{{eq}}$ was independently established by Huang~\cite{H1} and by Kerr and Li~\cite{KH}. A breakthrough toward the non-abelian case was made by Glasner, who proved in~\cite[Corollary 5.4]{G1} that if $(X, G)$ is a minimal tame system  admitting an invariant probability measure, then $\pi_{{eq}}$ is an almost one to one extension. However, he left open the question of whether this extension $\pi_{{eq}}$  is regular one to one \cite[Problem 5.7]{G1}. 
		This question was subsequently answered affirmatively by Fuhrmann, Glasner, J\"ager, and Oertel in \cite[Theorem 1.2]{FG}.
	 In \cite{HLSY}, Huang, Lian, Shao and Ye further developed the method in \cite{FG}, and showed that if a minimal
	system is an almost finite to
	one extension of its maximal equicontinuous factor and has
	no essential $K$-IT-tuples for some $K\ge  2$, then
	it has only finitely many ergodic measures.	In the same paper, they  proposed the following two conjectures  \cite[Conjectures 1 and 2]{HLSY}. 
	\begin{conj}\label{conj1}
		Let $(X,G)$ be a minimal tds that admits no essential $K$-IT-tuples, where $G$ is amenable. Then $\pi_{eq}$ is almost finite to one.
	\end{conj}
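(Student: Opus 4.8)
The statement is a conjecture; I will sketch how one can establish it in the incontractible case---equivalently, the case reached in this paper, which by item~(1) covers virtually nilpotent $G$---proving in fact that $\pi_{eq}$ is \emph{regular} (a fortiori almost) finitely-to-one. The overall strategy is to argue by contradiction inside the fibre-cardinality dichotomy: for a factor map of minimal systems $y\mapsto|\pi_{eq}^{-1}(y)|$ is $G$-invariant and has the Baire property on the minimal $X_{eq}$, hence is constant equal to some $k\in\N\cup\{\infty\}$ on a residual set (and, since $X_{eq}$ is uniquely ergodic, also $\nu_{eq}$-a.e.\ equal to some $k'$). One wants $k<\infty$; assuming $k=\infty$, one builds an essential $N$-IT-tuple for every $N$, contradicting the hypothesis.

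\emph{Step 1 (reduction to incontractibility).} For $G$ amenable an invariant measure exists, so item~(2) is ``local Bronstein with an invariant measure'' tautologically. For item~(1) one must show that a minimal $(X,G)$ with $G$ virtually nilpotent and no essential $K$-IT-tuple is incontractible, i.e.\ $(X^n,G)$ has a dense set of minimal points for every $n$. If not, some nonempty open $U\subseteq X^n$ meets no minimal subset; using the polynomial orbit growth of virtually nilpotent groups---so that Glasner-type Ellis-semigroup and recurrence estimates are available---one argues that such a ``contractible window'' forces essential $K$-IT-tuples for every $K$ (this is where genuinely contractible minimal systems, such as doubly minimal ones, get excluded), contradicting the hypothesis.

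\emph{Step 2 (infinite fibres $\Rightarrow$ long independence sets).} This is the core, and the adaptation to general group actions of the method of \cite{FG} as developed in \cite{HLSY}. Fix $N$. Since $k=\infty$, the $N$-fold fibre product $V=\{(x_1,\dots,x_N)\in X^N:\pi_{eq}(x_i)=\pi_{eq}(x_j)\ \forall i,j\}$ contains, over some base point, a tuple with pairwise-distinct coordinates; invoking incontractibility (or the local Bronstein condition), which supplies a dense set of minimal points in $V$, one selects a minimal point $\bar x=(x_1,\dots,x_N)\in V$ whose coordinates are distinct and all lie over a single point $a\in X_{eq}$---an almost periodic $N$-tuple. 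Now use that $X_{eq}$ is a minimal isometric $G$-system, i.e.\ a homogeneous space of a compact group: the approximate return times $\{g\in G:ga\approx a\}$ form a large (syndetic, Bohr-type) set along which $g$ maps $\pi_{eq}^{-1}(a)$ into a small neighbourhood of itself, and since this fibre is infinite there is room to steer the $N$ branches $x_1,\dots,x_N$ independently. A Sauer--Shelah / van der Waerden-type extraction---carried out over the compact-group factor and lifted through the incontractible (or Bronstein) extension---then yields an infinite $J\subseteq G$ such that for every finite $I\subseteq J$ and every $\sigma\colon I\to\{1,\dots,N\}$ there is $z\in X$ with $gz\in U_{\sigma(g)}$ for all $g\in I$, where $U_j$ is any prescribed small neighbourhood of $x_j$. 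Thus $\bar x$ is an essential $N$-IT-tuple; letting $N\to\infty$ gives the desired contradiction, so $k<\infty$ and, running the measure-theoretic half of the dichotomy in parallel, $\pi_{eq}$ is regular (hence almost) finitely-to-one, which yields Conjecture~\ref{conj1}.

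\emph{Main obstacle.} The heart of the difficulty is Step~2 in the non-$\mathbb Z$ setting: over $\mathbb Z$ one has Bohr-set arithmetic and van der Waerden-type configurations at hand, but for a general countable $G$ these must be replaced by a structure theory for minimal isometric $G$-systems as homogeneous spaces of compact groups, together with a simultaneous approximate-recurrence statement there, and all of this must be transported through an extension that is only incontractible (or Bronstein) while keeping the $N$ coordinates uniformly separated as the pattern $\sigma$ grows. A secondary obstacle is Step~1: obtaining incontractibility for virtually nilpotent $G$ without an a priori bound on IT-tuples, where polynomial orbit growth is the essential input and where genuinely contractible minimal systems must be ruled out.
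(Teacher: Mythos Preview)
Your proposal has genuine gaps and diverges from the paper's actual mechanism.

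\emph{Step 1.} Incontractibility for virtually nilpotent $G$ is an unconditional theorem of Glasner \cite{G75}: every minimal $(X,G)$ with $G$ virtually nilpotent is incontractible, with no IT-tuple hypothesis and no appeal to polynomial growth. Your sketched argument (``a contractible window forces essential $K$-IT-tuples'') is both circular and unnecessary.

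\emph{Step 2.} Here there is a real gap. You assert that incontractibility (or local Bronstein) ``supplies a dense set of minimal points in $V$'', the $N$-fold fibre product over $X_{eq}$. Neither hypothesis says this: incontractibility gives dense minimal points in $X^N$, not in the closed proper invariant subset $V$; local Bronstein is an approximation statement for minimal points in $RP_2$, lifted to $RP_K$ via Theorem~\ref{rp}, and does not hand you a minimal $N$-tuple with pairwise distinct coordinates over a single base point. Even granting such a tuple, the phrase ``Sauer--Shelah / van der Waerden-type extraction'' is precisely the whole theorem and is left as a slogan. The difficulty is that the $N$ branches are constrained to a single fibre, so standard independence-extraction arguments in $X^N$ do not apply, and you have not indicated any replacement.

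For comparison, the paper does something structurally different. It works in the hyperspace $2^X$, isolates the set $\mathcal{X}_{eq}^{\text{meas}}$ of limit fibres visible with positive $\nu_{eq}$-measure, and establishes an \emph{interior saturation property} (Proposition~\ref{prop-2}, Lemma~\ref{lem-2}): for a thickly syndetic set of $g$, the translate $g\overline{U}$ of a small open set contains, up to $\epsilon$, the full fibre over a ball of definite radius in $X_{eq}$. The infinite independence set is then built by an explicit induction (Claim~\ref{C-11}) on the Ellis group $E(X_{eq})$: one tracks positivity of Haar measure of an intersection $\cap_m \tilde Z_* h_m^{-1}$, uses Lemma~\ref{ll} to ensure this positivity survives each refinement, and combines it with the interior saturation property to place the next $g_*$. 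This hyperspace/Haar-measure machinery is what replaces your ``steering'' step, and your sketch does not engage with it or with any comparable device.
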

	\begin{conj}\label{conj} Let $(X, G)$ be a minimal tds, where $G$ is amenable.  If $\pi_{eq}$ is proximal and not almost one to one, then for each $K\ge 2$, there
		is an essential $K$-IT-tuple.
	\end{conj}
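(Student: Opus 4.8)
\textbf{A plan to prove Conjecture~\ref{conj}.} First I would reduce the conjecture to the structural theorem announced in the abstract. Since $G$ is amenable, $(X,G)$ carries a $G$-invariant Borel probability measure $\mu$; because $(X_{eq},G)$ is equicontinuous and minimal it is uniquely ergodic, and its unique invariant measure $\nu_{eq}=(\pi_{eq})_{*}\mu$ is ergodic (being the only invariant measure of a minimal system, it is extremal, hence ergodic). A proximal extension is \emph{local Bronstein} in the relevant sense: over every $y\in X_{eq}$ the relative proximal cell of each $x\in\pi_{eq}^{-1}(y)$ is the whole fibre, hence dense in it. So $(X,G)$ is a minimal, local Bronstein system with an invariant measure, and the structural theorem applies. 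To exploit ``$\pi_{eq}$ not almost one to one'' I would note that $y\mapsto\operatorname{diam}\pi_{eq}^{-1}(y)$ is upper semicontinuous (as $\pi_{eq}$ is a closed map of compact metric spaces), so $C:=\{y:|\pi_{eq}^{-1}(y)|=1\}=\bigcap_{n\ge1}\{y:\operatorname{diam}\pi_{eq}^{-1}(y)<1/n\}$ is a $G$-invariant $G_{\delta}$ set; in a minimal system a nonempty invariant $G_{\delta}$ set is dense, hence residual, so the failure of the almost one to one property forces $C=\emptyset$, i.e.\ \emph{every} fibre of $\pi_{eq}$ has at least two points. By ergodicity of $\nu_{eq}$ the $G$-invariant Borel function $y\mapsto|\pi_{eq}^{-1}(y)|\in\N\cup\{\infty\}$ is $\nu_{eq}$-a.e.\ equal to a constant $K_{0}$, and by the previous sentence $K_{0}\ge2$; thus $\pi_{eq}$ is regular $K_{0}$ to one with $K_{0}\ge2$. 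Plugging $K_{0}\ge2$ into the structural theorem --- in the form ``a regular fibre cardinality $\ge2$ forces essential $K$-IT-tuples for all $K$'' --- gives exactly the conclusion of Conjecture~\ref{conj}.

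\textbf{Proving the structural input.} For this I would run the ``regular extension'' machinery of Fuhrmann--Glasner--J\"ager--Oertel and of Huang--Lian--Shao--Ye \cite{FG,HLSY}, pushed to an arbitrary countable amenable $G$. Fix $K\ge2$ and assume for contradiction that $(X,G)$ has no essential $K$-IT-tuple (this also kills essential $L$-IT-tuples for all $L\ge K$, since dropping coordinates of an essential IT-tuple produces one of smaller order). Consider the poset of intermediate factors $X\to Z\to X_{eq}$ that are \emph{regular over} $X_{eq}$ (the $\nu_{eq}$-fibre cardinality is a.e.\ constant and the conditional measures vary measurably/continuously in a Lebesgue-density sense), and use Zorn's lemma to pick a maximal such $Z$. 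The crux is a \emph{dichotomy/induction lemma}: either this $Z$ already forces $\pi_{eq}$ itself to be regular with a.e.\ fibre cardinality $<K$, or the obstruction to extending regularity one layer further --- using the local Bronstein condition together with the invariant measure --- yields $K$ distinct points inside a single fibre of $\pi_{eq}$ whose arbitrarily small neighbourhoods admit arbitrarily long finite independence sets; a compactness/diagonalisation argument together with a Kerr--Li type combinatorial lemma \cite{KH} upgrading ``arbitrarily long finite'' to ``infinite'' then produces an essential $K$-IT-tuple, contradicting the assumption. Running this for each $K$ shows that as soon as the regular fibre cardinality $K_{0}$ exceeds $1$ one obtains essential $K$-IT-tuples for all $K\ge2$; since in Conjecture~\ref{conj} we have $K_{0}\ge2$, every $K$ is reached. (The base case $K=2$ is essentially Glasner's theorem that a minimal tame system with an invariant measure has almost one to one $\pi_{eq}$ \cite{G1}, read contrapositively.)

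\textbf{Where the difficulty lies.} The routine parts are the reductions in the first paragraph and the Zorn-lemma packaging; the real work is the dichotomy/induction lemma, hard in two distinct ways. First, one must prove that ``no essential $(K+1)$-IT-tuple'' genuinely rigidifies the fibre structure --- that bounded independence prevents the $\nu_{eq}$-fibre cardinality from increasing past $K$ along the regular tower --- and this is exactly where the local Bronstein hypothesis and the invariant measure must be used in tandem (via relative unique ergodicity of the proximal part and a martingale/density estimate on the fibres of the maximal regular factor). Second, the combinatorial step converting ``arbitrarily long finite independence sets for $(U_{1},\dots,U_{K})$'' into a genuine infinite independence set has to be executed for general countable amenable $G$, where the single-orbit tools available for $\mathbb Z$-actions are gone and one must argue along F\o lner sets while keeping all the quantifiers uniform over the Zorn-maximal tower; this bookkeeping, rather than any single estimate, is the main obstacle I anticipate.
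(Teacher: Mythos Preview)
Your reduction in the first paragraph contains a genuine gap. You correctly argue that ``not almost one to one'' forces every fibre of $\pi_{eq}$ to have at least two points, hence the $\nu_{eq}$-a.e.\ constant fibre cardinality $K_0$ satisfies $K_0\ge 2$. But the structural theorem you then invoke does \emph{not} have the form ``$K_0\ge 2$ forces essential $K$-IT-tuples for all $K$''. Theorem~\ref{main1'} (and your own dichotomy lemma) yield only the implication ``no essential $K$-IT-tuple $\Rightarrow$ $K_0<K$'', i.e.\ ``$K_0\ge K\Rightarrow$ essential $K$-IT-tuple''. From $K_0\ge 2$ this gives essential IT-\emph{pairs}, nothing more; your sentence ``Running this for each $K$ shows that as soon as $K_0>1$ one obtains essential $K$-IT-tuples for all $K\ge 2$'' is a non sequitur. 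If, say, $K_0=2$, your dichotomy is perfectly consistent with the absence of essential $3$-IT-tuples.

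What is missing is exactly the step the paper supplies: for a \emph{proximal} extension between minimal systems, ``not almost one to one'' is much stronger than ``$K_0\ge 2$'' --- it forces $\pi_{eq}$ to be almost $\infty$ to one (this is \cite[Corollary~2.17]{HLSY}, quoted in case~(d) of the introduction). Once the generic fibre is infinite, Theorem~\ref{main1'} gives that $\nu_{eq}$-a.e.\ fibre is an infinite IT-set, and picking $K$ distinct points in such a fibre yields an essential $K$-IT-tuple for every $K$. You never use proximality beyond the (correct) observation that it implies the local Bronstein condition; the additional leverage it provides --- ruling out finite regular fibre cardinality --- is precisely what closes the argument.

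As a secondary remark, your proposed proof of the structural input (Zorn's lemma over a tower of intermediate regular factors, in the style of \cite{FG,HLSY}) is quite different from what the paper actually does. The paper works in the hyperspace $2^X$: it isolates a class of closed sets with an ``interior saturation property'' (Proposition~\ref{prop-2}), and then constructs infinite independence sets directly via the Ellis group $E(X_{eq})$ and its Haar measure (Claim~\ref{C-11}), with no tower of intermediate factors and no upgrade from finite to infinite independence. Even if your Zorn/dichotomy route could be made to work, it would still only deliver ``$K_0\ge K\Rightarrow$ essential $K$-IT-tuple'', so the gap above would remain.
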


In this paper, we study the dynamics of hyperspace systems and develop a new method for constructing infinite independence sets. As a corollary of our main result (Theorem~\ref{main1'}), we provide an affirmative answer to Conjecture \ref{conj1} in the case where $G$ is virtually nilpotent (Indeed, $\pi_{{eq}}$ is regular finite to one in this case). Meanwhile, we   provide an affirmative answer to Conjecture \ref{conj}.

	\medskip
	
	As another focus of this paper, we investigate sensitivity through localization theory. Motivated by local entropy theory, Ye and Zhang \cite{YZ} introduced the concept of sensitive tuples. Li, Tu, and Ye \cite{LTY} studied sensitivity in its mean form, while more recently, Li, Tu, Ye, and Yu \cite{LY21, LYY22} explored the multi-version of mean sensitivity and its local representation, namely, mean sensitive tuples. Subsequently, Li, Liu, Tu, and Yu \cite{LLTS2024} demonstrated that for a minimal $\Z$-tds, every mean-sensitive tuple is an IT-tuple if $\pi_{eq}$ is almost one to one. They further conjectured that the almost one to one condition is unnecessary. As a corollary of our main results (see Theorem \ref{mainB}), we confirm the validity of their conjecture. In fact, we prove that this conjecture hold for a broader class of minimal systems with actions by amenable groups. 
	If, in addition,  $\pi_{eq}$ is open, we show IT-tuples, IN-tuples,   sequence entropy tuples, sensitive tuples, and weakly mean-sensitive tuples along a F{\o}lner sequence all coincide.
	
	\subsection{Main results}	
	In this subsection, we will state our main results and explain how they address the conjectures and questions mentioned above. In fact, we will also provide additional cases where our results can be applied effectively.
	
	Let us begin with some definitions and notations.  	Let $(X,G)$ be a minimal tds.
	\begin{itemize}
		\item [(a)] $(X,G)$ is said to be incontractible (see Glasner \cite{G1996}) if for every $n\in\N$, $(X^n,G)$ is  Bronstein, i.e., the set of minimal points of $(X^n,G)$ is dense in $X^n$.	
		\medskip
		\item [(b)]$(X, G)$ is said to satisfy the local Bronstein condition, or $(X, G)$ is local Bronstein, (see Auslander \cite{A3}) if, whenever $(x, x')$ is a minimal point in $RP_2(X,G)$, there exist a sequence $\{(x_n, x'_n)\}_{n\in\N}$ of minimal points in $ X \times X$, and  a sequence $\{g_n\}_{n\in\N}$ in $G$, such that
		$$\lim_{n\to\infty}
		(x_n, x'_n)= (x, x') \text{ and } \lim_{n\to\infty}d(g_nx_{n},g_nx'_{n})=0,$$
		where  
		\begin{align*}
			RP_2&(X, G)=\{(x_1,x_2)
			\in X^2 :\forall\epsilon > 0,~\exists x_k'
			\in X\text{ and }g \in G\text{ with } \\
			&d(x_k, x_k')\le \epsilon\ (1\le k\le 2)
			\text{ and }d(gx'_{k_1}, gx_{k_2}')\le \epsilon\ (1\le k_1\le k_2\le 2)\}.
		\end{align*} 
	\end{itemize}

	By a tds $(X,G)$, a nonempty set $A \subset X$ is called an IT-set if every tuple formed by points in $A$ is an IT-tuple.	We now proceed to state our first main result.
	\begin{thm}\label{main1'}Let $(X,G)$ be a minimal tds, and let $\pi_{eq}: X\to X_{eq}$ be the factor
		map to its maximal equicontinuous factor.  Assume  one of the following conditions holds: \begin{itemize}
			\item[(1)] $(X,G)$ is incontractible;
			\medskip
			\item[(2)] $(X, G)$  satisfies the local Bronstein condition, and admits an  invariant  probability measure.
		\end{itemize}
		Then, there exists a dense $G_\delta$ subset $Z\subset X_{eq}$ with $\nu_{eq}(Z)=1$ such that $\pi_{eq}^{-1}(y)$ is an IT-set  for every $y\in Z$.	
	\end{thm}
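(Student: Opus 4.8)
The plan is to reduce the statement to two independent inputs and to push the key one through the hyperspace. First I would fix notation: let $\mathbb{K}=\overline{G}\le\mathrm{Homeo}(X_{eq})$ be the Ellis group of the equicontinuous minimal system $(X_{eq},G)$---a compact metrizable group acting transitively on $X_{eq}$, so that $\nu_{eq}$ is the push-forward of its Haar measure---and for $K\in\N$ let $R_K=X\times_{X_{eq}}\cdots\times_{X_{eq}}X$ ($K$ factors) be the fibre product, a closed $G$-invariant subset of $X^K$ whose fibre over $y\in X_{eq}$ is $\pi_{eq}^{-1}(y)^K$. The two inputs are: \emph{(Transfer)} if $w=(w_1,\dots,w_K)$ is a minimal point of $(X^K,G)$ all of whose coordinates lie in a single $\pi_{eq}$-fibre, then $w$ is an IT-tuple; and \emph{(Density)} under hypothesis (1) (resp.\ (2)) there is a dense $G_\delta$ set $Z\subseteq X_{eq}$ with $\nu_{eq}(Z)=1$ such that, for every $K$, the fibre $\pi_{eq}^{-1}(y)^K$ of $R_K$ over $y$ contains a dense set of minimal points of $(X^K,G)$, for each $y\in Z$. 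Given these, the theorem drops out: for $y\in Z$, any $(x_1,\dots,x_K)\in\pi_{eq}^{-1}(y)^K$ and any neighbourhoods $U_i\ni x_i$, the set $R_K\cap(U_1\times\cdots\times U_K)$ is non-empty and relatively open in $\pi_{eq}^{-1}(y)^K$, hence by (Density) contains a minimal point of $(X^K,G)$; that point lies in one $\pi_{eq}$-fibre, so by (Transfer) it is an IT-tuple, whence $(U_1,\dots,U_K)$ has an infinite independence set and $(x_1,\dots,x_K)$ is an IT-tuple---so $\pi_{eq}^{-1}(y)$ is an IT-set.

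For (Transfer)---the heart of the matter, and where the hyperspace enters---I would proceed as follows. Set $M=\overline{Gw}\subseteq R_K$; since on a dense orbit all $K$ coordinates have the same $\pi_{eq}$-image, $M$ is a minimal extension of the equicontinuous system $X_{eq}$, carrying over $y$ a point whose coordinates are ``spread out''. To manufacture, for prescribed $U_i\ni w_i$, an \emph{infinite} independence set for $(U_1,\dots,U_K)$---rather than merely arbitrarily large finite ones---I would not argue combinatorially in $X$ but instead build, inside the hyperspace $2^{X}$ (or $2^{X^K}$), a single compact $G$-invariant set whose fibres over $X_{eq}$ simultaneously encode the realisability of every finite pattern $\sigma\colon I\to\{1,\dots,K\}$ by a point of $X$ whose orbit visits $U_{\sigma(g)}$ at each time $g\in I$. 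Equicontinuity of the base supplies return times of $y$ that are as dense (Bohr-syndetic) as one wishes, along which the fibres of $M$ stay Hausdorff-close to the fibre over $y$; minimality of $M$ inside $R_K$ then lets the coordinates be matched to the prescribed $U_{\sigma(g)}$; and a compactness/recurrence argument in the hyperspace upgrades the resulting family of finite patterns to a genuine infinite independence set. I expect this finite-to-infinite passage, together with the verification that the pattern-encoding object is genuinely $G$-invariant and minimal over $X_{eq}$, to be the principal obstacle---morally it says that a minimal extension of an equicontinuous system carrying a nontrivial ``spread'' inside a single fibre cannot be tame relative to its base, made effective through hyperspaces.

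For (Density) I would treat the two hypotheses separately. Under (1), incontractibility implies that $\pi_{eq}\colon X\to X_{eq}$ is a relatively incontractible (RIC) extension; RIC extensions are open and relatively Bronstein---their iterated fibre products have dense sets of minimal points along every fibre---so for \emph{every} $y\in X_{eq}$ the fibre $\pi_{eq}^{-1}(y)^K$ of $R_K$ has a dense set of minimal points of $(X^K,G)$, and one may simply take $Z=X_{eq}$. Under (2), I would disintegrate the invariant measure, $\mu=\int_{X_{eq}}\mu_y\,d\nu_{eq}(y)$ with $g_*\mu_y=\mu_{gy}$ and $\mathrm{supp}\,\mu_y\subseteq\pi_{eq}^{-1}(y)$, and adapt the pointwise-ergodic argument of Fuhrmann--Glasner--J\"ager--Oertel \cite{FG}, as developed by Huang--Lian--Shao--Ye \cite{HLSY}: the local Bronstein condition guarantees that regional proximality inside a fibre is witnessed by nearby \emph{minimal} pairs whose orbit-excursions vanish, and feeding this into the disintegration yields, for $\nu_{eq}$-a.e.\ $y$, a dense set of minimal points of $(X^K,G)$ inside $(\mathrm{supp}\,\mu_y)^K$; minimality of $(X,G)$ then gives $\mathrm{supp}\,\mu_y=\pi_{eq}^{-1}(y)$ for $\nu_{eq}$-a.e.\ $y$, and the same argument in fact produces a set that is simultaneously a dense $G_\delta$ and $\nu_{eq}$-conull, which we take as $Z$. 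In either case the assembly of the first paragraph finishes the proof.
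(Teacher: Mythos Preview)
Your decomposition into (Transfer) and (Density) is appealing, but both steps have real problems and the route diverges substantially from the paper's argument.

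\textbf{(Transfer) is not proved.} You identify correctly that the finite-to-infinite passage is the crux, but your sketch---``build a compact $G$-invariant set in $2^X$ whose fibres encode every finite pattern, then use compactness/recurrence''---does not name a construction. What object in $2^X$ is this, why is it $G$-invariant, and how does recurrence in the hyperspace produce a single infinite independence set rather than an unbounded family of finite ones? The paper solves exactly this difficulty, but by a different and concrete mechanism: it shows (Proposition~3.6 and Lemma~3.7) that closures of orbits of $\overline{U}$ in $2^X$ land, along their minimal centre, in sets with the \emph{interior saturation property} $\pi_{eq}^{-1}(\mathrm{int}(\pi_{eq}(E)))\subset E$, and then runs an explicit induction (Claim~4.3) inside the Ellis group $E(X_{eq})$, using Haar measure to guarantee that the intersections $\bigcap_m \tilde Z_* h_m^{-1}$ stay positive at every stage. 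Nothing in your outline plays the role of either the interior saturation property or the Haar-measure bookkeeping, and without them I do not see how to control all $K^n$ patterns simultaneously. Note also that (Transfer) as you state it uses neither hypothesis~(1) nor~(2); if it were true in that generality, it would say that every minimal regionally-proximal pair is an IT-pair in an arbitrary minimal system, which is not known and likely false---the paper's argument uses property~$(*)$ essentially (in Claim~3.11) to get the syndetic set needed for Lemma~3.7.

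\textbf{(Density) under (2) rests on a false claim.} You assert that minimality of $(X,G)$ forces $\mathrm{supp}\,\mu_y=\pi_{eq}^{-1}(y)$ for $\nu_{eq}$-a.e.\ $y$. This fails already for the two-ergodic-measure example from \cite{FG} discussed in the introduction: it is a minimal $\mathbb{Z}$-subshift (hence abelian, hence satisfying both (1) and (2)) that is regular $2$ to one over $X_{eq}$, and for either ergodic measure the conditional $\mu_y$ is a Dirac mass on one of the two preimages for a.e.\ $y$. More generally, whenever the system has more than one ergodic measure, no single ergodic $\mu$ can have conditionals with full fibre support. The paper avoids this altogether: it never disintegrates an invariant measure on $X$, but defines $\mathcal{X}_{eq}^{\mathrm{meas}}=\{E\in\mathcal{X}:\nu_{eq}(\pi_{\mathcal{X}}(B^\mathcal{X}_\epsilon(E)))>0\ \forall\epsilon>0\}$ purely in terms of $\nu_{eq}$ on the base, shows by a soft Lusin argument (Lemma~3.3) that $\{y:\pi_{eq}^{-1}(y)\in\mathcal{X}_{eq}^{\mathrm{meas}}\}$ is a full-measure dense $G_\delta$, and then proves (Theorem~4.1) that every $E\in\mathcal{X}_{eq}^{\mathrm{meas}}$ is an IT-set. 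The hypotheses (1)/(2) enter only in this last step, through the interior saturation machinery, not in locating the good set $Z$.
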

	\begin{rem}\label{rem-1}
		We note that both conditions in Theorem \ref{main1'}  ensure that the tds $(X,G)$ satisfies the local Bronstein condition. Indeed, it is straightforward to observe that if $(X^2,G)$ is Bronstein, then $(X,G)$  is local Bronstein. 
		
		However, these additional conditions are essential. That is, simply assuming that 
		$(X,G)$  is local Bronstein, is insufficient to establish our theorem.	 
		For example, let $\mathbb{T} = \mathbb{R}/\mathbb{Z}=[0,1)$. Define $T$ and $S$ on $\mathbb{T}$ by $Tx = x + \alpha\mod 1$ and $Sx = x^2$ for all $x \in \mathbb{T}$, where $\alpha$ is an irrational number. Let $G$ be the group generated by $T$ and $S$. Since $(\mathbb{T}, T)$ is minimal, so is $(\mathbb{T}, G)$. The maximal equicontinuous factor of $(\mathbb{T}, G)$ is trivial, and so the fiber of $\pi_{eq}$ is $\mathbb{T}$. By the properties $S$, $\pi_{eq}$ is a proximal extension, i.e.,  $\min_{g\in G} d(gx_{1},gx_{2})=0$ whenever $x_1,x_2\in \mathbb{T}$ with $\pi_{eq}(x_1)=\pi_{eq}(x_2)$. We deduce that the minimal points of $(\mathbb{T} \times \mathbb{T}, G)$ lying in the same fiber must lie in the diagonal $\Delta_\mathbb{T} := \{(x, x) : x \in \mathbb{T}\}$. This implies that $(\mathbb{T}, G)$ satisfies the local Bronstein condition.
		  Additionally, it can be verified that $(\mathbb{T}, G)$ is c-ordered (see \cite[Definition 2.6]{G2}).
		According to \cite[Theorem 2.13]{G2} by Glasner and Megrelishvili, it follows that $(\mathbb{T}, G)$ is null, and hence, it does not have any essential IT-pairs.
	\end{rem}

	Now, we summarize some situations that meet the conditions in Theorem \ref{main1'}. 
	\begin{enumerate}[(a)]

		\item \textbf{$(X,G)$  is a minimal tds, with $G$ as a virtually nilpotent group, or especially an abelian group (Establishing Conjecture \ref{conj1} for these cases).}
		
		A nilpotent group has a lower central series that terminates at the trivial subgroup after finitely many steps. A virtually nilpotent group is one that contains a nilpotent subgroup of finite index. Glasner~\cite{G75} proved that if $G$ is a virtually nilpotent group, then any minimal tds $(X,G)$ is incontractible. Since every virtually nilpotent group is amenable, it always admits an invariant probability measure. Hence, by Remark~\ref{rem-1}, such systems also satisfy condition~(2) in Theorem~\ref{main1'}.

		By applying Theorem~\ref{main1'} to these settings, we provide an affirmative answer to~\cite[Question 4.8]{MaassShao}, which asks whether every minimal $\mathbb{Z}$-system with finite maximal sequence entropy is of finite type.
		
		\medskip
		
		\item  \textbf{$(X,G)$ is a  point distal tds.}
		
		Following Veech \cite{Vee}, a tds $(X,G)$ is said to be a point distal tds, if $(X,G)$ contains a distal point $x\in X$ whose orbit is dense in $X$. Here a point $x\in X$ is called a distal point if $\min_{g\in G}d(gx,gx')>0$ for any $x'\in X$ with $x'\neq x$. Denote by $D(X,G)$ the set of distal points of $(X,G)$.
		Now we prove that each point distal tds $(X,G)$ is incontractible. 
		
		Let $(X,G)$ be a point distal tds.  It follows from Auslander-Ellis Theorem that any distal point is a minimal point, and hence a point distal tds is  minimal. Since the nonempty set $D(X,G)$ is $G$-invariant and $(X,G)$ is minimal, one has  $D(X,G)$ is dense in $X$. By the definition of distal points, it is easy to check that $D(X,G)^n\subset D(X^n,G)$, which implies that $D(X^n,G)$ is dense in $X^n$ for each $n\in\N$. Using the fact that 
		each distal point is a minimal point again, we have the set of minimal points of $(X^n,G)$ is dense in $X^n$ for each $n\in\N$, and hence $(X,G)$ is incontractible.
		
		\medskip

		\item\textbf{${(X,G)}$   is a minimal and almost finite to one extension of its maximal equicontinuous factor.}
		
		It follows from \cite[Theorem 3.16]{CS} that, if  a minimal tds is an almost finite to one extension of its maximal equicontinuous factor, then it is a point distal tds. Thus, this case is a special case of situation (b). Huang, Lian, Shao and Ye \cite{HLSY} under the assumption that  $\pi_{eq}$ is almost $N$ to one, and there is some
		integer $K\ge 2$ such that $(X, G)$ has no essential $K$-IT-tuples,  proved that $(X,G)$ has no more than $N(K-1)^N$ ergodic measures.
		Recently, the upper bound was improved to $N(K-1)$ by Wang and the first two authors of this paper \cite{LWX}. By applying Theorem \ref{main1'} in this situation, that is, $\pi_{eq}$ is almost finite to one, we improve the upper bound to $(K-1)$ (see Corollary \ref{cor-B1} (iv)). This resolves the optimal upper bound aimed by Huang et al. \cite[p.5, Remark (2)]{HLSY}.
		
		Now, we explain why this upper bound of $(K-1)$ appears to be optimal. As shown in \cite[Subsection 5.3]{FG}, there exists a minimal system, which is an at
		most two to one extension of its maximal equicontinuous
		factor, that has no essential $3$-IT-tuples and exhibits two distinct ergodic probability measures.

		\medskip
		
		\item \textbf{$(X,G)$ is a minimal, proximal extension  of its  maximal equicontinuous factor, and admits  an invariant  probability measure (Establishing Conjecture \ref{conj}).}
		
		Recall that $\pi_{eq}$ is called a proximal extension if  $\min_{g\in G} d(gx_{1},gx_{2})=0$ whenever $x_1,x_2\in X$ with $\pi_{eq}(x_1)=\pi_{eq}(x_2)$.  
		If 	a minimal point  of $(X\times X,G)$ belongs to $RP_2(X,G)$ then $\pi_{eq}(x_1)=\pi_{eq}(x_2)$, implying  $\min_{g\in G} d(gx_{1},gx_{2})=0$. In particular,   $(X,G)$ is local Bronstein.

		Now we show Theorem \ref{main1'} implies that Conjecture \ref{conj} holds.
		In fact,  since $\pi_{eq}$ is proximal and not almost one to one, it follows from \cite[Corollary 2.17]{HLSY} that $\pi_{eq}$ is infinite to one. Thus, Theorem \ref{main1'} shows that for $\nu_{eq}$-a.e. $y\in X_{eq}$, $\pi_{eq}^{-1}(y)$ contains arbitrarily large essential IT-tuples, and hence Conjecture \ref{conj} holds.
		
		Notably,  by Corollary \ref{cor-B1} (iii), we prove a stronger result that,  for every $y\in X_{eq}$, $\pi_{eq}^{-1}(y)$ contains arbitrarily large essential IT-tuples.
	\end{enumerate}

	\medskip
	\medskip

We now turn to the second topic of this paper: mean sensitivity. To better illustrate the main results, we begin by introducing the notion of a weakly mean-sensitive tuple.
	\begin{defn}\label{defn-mean-sensitive}Let $(X,G)$ be a tds, where  $G$ is amenable.
		A tuple $(x_k)_{k=1}^K\in X^K$ is a   weakly mean-sensitive $K$-tuple (weakly mean-sensitive pair for $K=2$) if 
		for any open
		neighborhood $U_k$ of $x_k$, $k=1,2,\cdots,K$, there exists $\delta>0$ such that for any nonempty open subset $U$ of $X$, $$\overline{BD}(\{g\in G: (gU)\cap U_k\neq\emptyset\text{ for }1\le k\le K\})>\delta,$$
		where $\overline{BD}(A)$ is the upper Banach density of $A$ (see Section \ref{s-2} for definition). 
	\end{defn}

	A set is called a  weakly mean-sensitive set  if every tuple composed of  points within it is a  weakly mean-sensitive tuple.  Corresponding to Theorem \ref{main1'} for IT-sets, we present the following results.
	\begin{thm}\label{mainB}Let $(X,G)$ be a minimal tds, where  $G$ is amenable. If  $(X,G)$ satisfies the local Bronstein condition, then
		the following  statements hold:
		\begin{enumerate}[(i)]
			\item There exists  a dense $G_\delta$ subset $Z\subset X_{eq}$ with $\nu_{eq}(Z)=1$ such that  $\pi_{eq}^{-1}(y)$ is a  weakly mean-sensitive set for every $y\in Z$.
			\item Assume $K\in\N$ and $\pi_{eq}$ is regular $K$ to one. Then, $(X,G)$ has essential  weakly mean-sensitive $K$-tuples but  has no essential  weakly mean-sensitive $(K+1)$-tuples. 
			
			\item Every  weakly mean-sensitive set is  an IT-set.	
		\end{enumerate}
	\end{thm}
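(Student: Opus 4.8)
The plan is to derive all three items from Theorem~\ref{main1'} together with the interaction between weak mean sensitivity and independence in the minimal amenable setting. Since $G$ is amenable it admits an invariant probability measure, so under the local Bronstein hypothesis condition~(2) of Theorem~\ref{main1'} is in force and produces a dense $G_\delta$ set $Z\subset X_{eq}$ with $\nu_{eq}(Z)=1$ such that $\pi_{eq}^{-1}(y)$ is an IT-set for every $y\in Z$. The first observation, which I will call Step~A, is that a weakly mean-sensitive tuple is ``fibrewise'' over $\pi_{eq}$: testing Definition~\ref{defn-mean-sensitive} on open sets of the form $U=\pi_{eq}^{-1}(W)$ shows that weak mean sensitivity passes to factors, while $(X_{eq},G)$, being minimal equicontinuous (hence uniquely ergodic and mean equicontinuous), has no non-diagonal weakly mean-sensitive tuples --- for a tiny ball $W$ equicontinuity forces every translate $gW$ to have diameter below the separation of two distinct target balls, killing the required density. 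Hence if $(x_1,\dots,x_K)$ is weakly mean-sensitive in $X$ then $\pi_{eq}(x_1)=\dots=\pi_{eq}(x_K)$, and in particular every weakly mean-sensitive set lies in a single fibre of $\pi_{eq}$.

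I would prove~(iii) first, as it carries the new content. Let $\bar x=(x_1,\dots,x_K)$ be weakly mean-sensitive; by Step~A it lies in $\pi_{eq}^{-1}(y_0)^K$ for some $y_0$. The set $\mathrm{IT}_K(X,G)$ of $K$-IT-tuples is closed and $G$-invariant and, by Theorem~\ref{main1'}, contains $\pi_{eq}^{-1}(y)^K$ for every $y\in Z$; so it is enough to show that every neighbourhood $U_1\times\dots\times U_K$ of $\bar x$ contains a tuple $(z_1,\dots,z_K)$ with $\pi_{eq}(z_1)=\dots=\pi_{eq}(z_K)=y$ for some $y\in Z$. To build it, take $U=B(w,r)$ with $\pi_{eq}(w)=y_1$ and apply Definition~\ref{defn-mean-sensitive}: for a set of $g$ of upper Banach density exceeding the associated $\delta$ there are points $u_k^{(g)}\in U$ with $g u_k^{(g)}\in U_k$; since $\pi_{eq}(u_k^{(g)})$ lies within $\eta(r)$ of $y_1$, equicontinuity of $(X_{eq},G)$ gives $d(\pi_{eq}(g u_k^{(g)}),g y_1)<\rho(r)$ with $\rho(r)\to 0$ uniformly in $g$, so $(g u_1^{(g)},\dots,g u_K^{(g)})$ is an ``$\rho(r)$-near-fibre'' tuple in $U_1\times\dots\times U_K$ lying over $g y_1$. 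Unique ergodicity of $X_{eq}$ forces any weak-$\ast$ cluster point of the empirical measures of $\{g y_1\}$ along this good set to be dominated by $\nu_{eq}$ and to have mass at least $\delta$, hence to be carried by a set of positive $\nu_{eq}$-measure, which meets the co-null $Z$; one can therefore select good $g$ with $g y_1$ converging to a point of $Z$. Sending $r\to0$ along such choices and extracting a convergent subsequence of tuples produces the required $(z_1,\dots,z_K)$, and then $\bar x\in\mathrm{IT}_K(X,G)$ by closedness. \emph{The main obstacle I anticipate is precisely this passage to the limit}: since $Z$ is only a dense $G_\delta$, a naive limit of points of $Z$ may land on $\partial Z$ rather than inside $Z$. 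Closing this gap is where the local Bronstein condition (and, in spirit, the mechanism behind Theorem~\ref{main1'}) must be invoked --- for instance by first replacing $\bar x$ by a minimal point of $(X^K,G)$ lying in the regionally proximal relation and using the Bronstein-type approximation by asymptotically proximal minimal tuples, or by passing to an open (relatively invariant measure) extension on which every fibre of $\pi_{eq}$ is an IT-set and transporting the conclusion back down.

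For~(i) it then remains to prove the reverse inclusion restricted to fibres: an IT-tuple whose coordinates lie in one $\pi_{eq}$-fibre is weakly mean-sensitive. I would use the identity $\bigcap_{k=1}^{K}\{g\in G:gU\cap U_k\neq\emptyset\}=\{g\in G:g(U\times\dots\times U)\cap(U_1\times\dots\times U_K)\neq\emptyset\}$ in the product system $(X^K,G)$, together with the fact that in a minimal amenable system an IT-tuple admits an independence set of positive upper Banach density (a density strengthening of the definition of IT-tuple), and then exploit the fibre structure and equicontinuity of $X_{eq}$ --- which synchronise the relevant return times --- to produce a lower bound $\delta$ valid uniformly over all nonempty open $U$. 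Granting this, (iii) combined with ``fibres over $Z$ are IT-sets'' (Theorem~\ref{main1'}) gives~(i); the delicate point here is the uniformity of $\delta$ in $U$, which I would settle by a compactness/finite-subcover argument in $X^K$. Alternatively, (i) can be obtained by running the proof of Theorem~\ref{main1'} with ``weakly mean-sensitive'' in place of ``IT''.

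Finally, for~(ii): if $\pi_{eq}$ is regular $K$ to one then $\nu_{eq}(\{y:|\pi_{eq}^{-1}(y)|=K\})=1$, so for $\nu_{eq}$-a.e.\ $y\in Z$ the fibre $\pi_{eq}^{-1}(y)$ has exactly $K$ points, which by~(i) form an essential weakly mean-sensitive $K$-tuple; thus such tuples exist. Conversely, an essential weakly mean-sensitive $(K+1)$-tuple spans a weakly mean-sensitive set, hence by~(iii) an essential $(K+1)$-IT-tuple, and by Step~A all of its coordinates would lie in a single fibre of $\pi_{eq}$; the Fuhrmann--Glasner--J\"ager--Oertel and Huang--Lian--Shao--Ye type propagation argument (valid under the local Bronstein and amenability hypotheses) then forces $\nu_{eq}$-almost every fibre of $\pi_{eq}$ to contain at least $K+1$ points, contradicting the regular $K$-to-one assumption. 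This would complete the proof.
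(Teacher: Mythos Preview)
Your approach to (iii) is close in spirit to the paper's, and the paper's key step is exactly what closes the gap you anticipate: rather than extracting a limit point in $Z$, the paper proves directly (Lemma~\ref{ss}) that if $(x_1,\dots,x_K)$ is weakly mean-sensitive then $\nu_{eq}\bigl(\bigcap_{k=1}^K\pi_{eq}(U_k)\bigr)>0$ for all choices of neighbourhoods $U_k\ni x_k$. Since $\nu_{eq}(Z)=1$, this immediately yields a point $\tilde y\in Z\cap\bigcap_k\pi_{eq}(U_k)$, and hence a tuple $(\tilde x_1,\dots,\tilde x_K)\in U_1\times\dots\times U_K$ with all coordinates in the IT-set $\pi_{eq}^{-1}(\tilde y)$; closedness of the set of IT-tuples finishes. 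Your empirical-measure argument is effectively a roundabout attempt at this positive-measure statement, and once one has it there is no limiting issue at all --- positive $\nu_{eq}$-measure plus $\nu_{eq}(Z)=1$ gives the required point inside $Z$, not merely in its closure.

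Your primary route to (i), however, has a genuine gap. The claim that an IT-tuple ``admits an independence set of positive upper Banach density'' is precisely the definition of an IN-tuple, and IT-tuples need not be IN-tuples even in minimal amenable systems; indeed the paper's own remark following Theorem~\ref{mainB} cites minimal systems (Fuhrmann--Kwietniak) with essential IT-pairs that are \emph{not} weakly mean-sensitive, so there is no general implication from IT to weakly mean-sensitive available. The paper does not derive (i) from (iii); it proves (i) directly (Theorem~\ref{main2}) via the interior-saturation machinery (specifically Lemma~\ref{lem-2}) underlying Theorem~\ref{main1'}, showing that every $E_*\in\mathcal{X}_{eq}^{\text{meas}}$ is a weakly mean-sensitive set with the explicit constant $\delta=\nu_{eq}\bigl(\pi_\mathcal{X}(B^\mathcal{X}_{\epsilon_*/2}(E_*))\bigr)$. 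This is precisely your ``alternative'' suggestion, and it is the one that works.

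For (ii), the paper again uses Lemma~\ref{ss}: an essential weakly mean-sensitive $(K+1)$-tuple with pairwise disjoint neighbourhoods gives $\nu_{eq}\bigl(\{y:|\pi_{eq}^{-1}(y)|\ge K+1\}\bigr)>0$ directly, contradicting regularity. Your detour through (iii) and a propagation argument is unnecessary.
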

	\begin{rem} Notice that the converse of Theorem \ref{mainB} (iii) does not necessarily hold; specifically, there exist a minimal tds whose essential IT-pairs  are not essential weakly mean-sensitive pairs. For example, Fuhrmann and Kwietniak \cite[Section 3]{FuhrmannKwietniak2020} constructed a class of minimal systems that are regular one to one extensions of its maximal equicontinuous factor, and possess essential IT-pairs. By Theorem \ref{mainB} (ii), this class of systems has no essential weakly mean-sensitive pairs. Therefore, for a minimal tds, an IT-pair may not always be a weakly mean-sensitive pair.
	\end{rem}

The notion of mean sensitive tuples for $\mathbb{Z}$-actions was introduced by Li, Ye, and Yu~\cite{LYY22}, and can be naturally extended to amenable group actions as follows. Let $(X, G)$ be a tds with $G$ amenable, and let $\mathcal{F}$ be a F{\o}lner sequence in $G$.
	A $K$-tuple $ (x_k)_{k=1}^K\in X^K$ is a  mean-sensitive $K$-tuple along $\mathcal{F}$  if 
	for any open
	neighborhood $U_k$ of $x_k$ ($k=1,2,\cdots,K$), there exists $\delta>0$ such that for any nonempty open subset $U$ of $X$, there exist $y_1,\cdots,y_K\in U$ such that $$\limsup_{n\to\infty}\frac{1}{|F_n|}|\{g\in F_n:gy_k\in U_k\text{ for }1\le k\le K\}|>\delta,$$
	where $|A|$ is the number of elements in a finite set $A$.  It is straightforward to see that every mean-sensitive tuple is also a weakly mean-sensitive tuple.
Li, Liu, Tu, and Yu \cite{LLTS2024} demonstrated that for a minimal tds $(X, \mathbb{Z})$ with $\pi_{{eq}}$ being almost one to one, every mean-sensitive $K$-tuple along the F{\o}lner sequence $\{ [0, n-1] \cap \mathbb{Z} \}_{n \in \mathbb{N}}$ is a $K$-IT-tuple. They further conjectured that the almost one to one condition on $\pi_{\mathrm{eq}}$ is superfluous.
In particular, Theorem \ref{mainB} allow us to confirm their conjecture in a broader setting and derive a more general result.
	\begin{thm}\label{thm123}
		Let $(X,G)$ be a minimal tds, where  $G$ is amenable, and $\mathcal{F}$ be a F{\o}lner sequence of $G$. If $(X,G)$ is local Bronstein, then each mean-sensitive $K$-tuple along $\mathcal{F}$ is an $K$-IT-tuple for $K\in\N$ with $K\ge2$.
	\end{thm}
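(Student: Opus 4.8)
The plan is to deduce Theorem~\ref{thm123} as a quick corollary of Theorem~\ref{mainB}, using the elementary implication ``mean-sensitive along $\mathcal F$ $\Rightarrow$ weakly mean-sensitive'' already noted in the excerpt. Concretely, suppose $(x_k)_{k=1}^K$ is a mean-sensitive $K$-tuple along $\mathcal F$. Fix open neighborhoods $U_k\ni x_k$. By the definition of mean sensitivity along $\mathcal F$ there is $\delta>0$ so that for every nonempty open $U\subset X$ one can find $y_1,\dots,y_K\in U$ with
\[
\limsup_{n\to\infty}\frac{1}{|F_n|}\bigl|\{g\in F_n: gy_k\in U_k,\ 1\le k\le K\}\bigr|>\delta.
\]
In particular the set $\{g\in G: (gU)\cap U_k\neq\emptyset\text{ for all }1\le k\le K\}$ contains, for each $n$, at least $\delta|F_n|$ elements of $F_n$ (any $g$ witnessed by some $y_k$ works), so this set has positive upper density along $\mathcal F$, hence positive upper Banach density; thus $(x_k)_{k=1}^K$ is a weakly mean-sensitive $K$-tuple in the sense of Definition~\ref{defn-mean-sensitive}. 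Since neighborhoods were arbitrary, this holds for the tuple itself.

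Now I would invoke Theorem~\ref{mainB}(iii): as $(X,G)$ is minimal, amenable, and local Bronstein, every weakly mean-sensitive set is an IT-set. The remaining point is that Theorem~\ref{mainB}(iii) is phrased for \emph{sets} (``every weakly mean-sensitive set is an IT-set''), so to apply it to a single $K$-tuple $(x_k)_{k=1}^K$ I need the analogous statement at the level of tuples. Two routes are available: either the proof of Theorem~\ref{mainB}(iii) already establishes the tuple-wise statement (``every weakly mean-sensitive $K$-tuple is a $K$-IT-tuple'') and the set formulation is just its packaging, in which case one cites that directly; or, if only the set version is available, one argues that the IT-property of tuples is closed and diagonal-invariant enough that a weakly mean-sensitive tuple generates a weakly mean-sensitive set. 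I expect the former, so the cleanest write-up simply cites the tuple-level assertion proved en route to Theorem~\ref{mainB}(iii).

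The main (and really only) obstacle is this bookkeeping mismatch between ``set'' and ``tuple'' formulations; once it is resolved the argument is immediate. I would therefore structure the proof as: (1) recall mean-sensitive along $\mathcal F$ implies weakly mean-sensitive, with the one-line density estimate above; (2) apply the tuple version of Theorem~\ref{mainB}(iii) to conclude $(x_k)_{k=1}^K$ is a $K$-IT-tuple. No F{\o}lner-sequence-specific input beyond step~(1) is needed, which explains why the conclusion is independent of the choice of $\mathcal F$ — the passage to upper Banach density erases that dependence. This also makes transparent why the hypotheses of Theorem~\ref{mainB} (minimality, amenability, local Bronstein) are exactly what is required here, and nothing more.
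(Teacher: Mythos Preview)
Your approach is exactly the paper's: Theorem~\ref{thm123} is stated in the introduction as an immediate consequence of Theorem~\ref{mainB}, preceded only by the sentence ``It is straightforward to see that every mean-sensitive tuple is also a weakly mean-sensitive tuple,'' which is precisely your step~(1). Your anticipated resolution of the set-versus-tuple issue is also correct: the proof of Theorem~\ref{mainB}(iii) proceeds via Lemma~\ref{ss}, which is stated and proved for a single weakly mean-sensitive $K$-tuple, so the tuple-level statement ``every weakly mean-sensitive $K$-tuple is a $K$-IT-tuple'' is what is actually established, and you may cite it directly. One small slip: the $\limsup$ only guarantees the density bound along a subsequence of $\{F_n\}$, not ``for each $n$''; but that is all you need to conclude positive upper Banach density, so the argument stands.
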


	A factor map $\pi: (X, G) \to (Y, G)$ between two minimal tds is open if it maps open subsets of $X$ to open subsets of $Y$. Every distal extension is open (see \cite{G2005}), and any factor map between minimal systems can be lifted to an open extension via almost one to one extensions \cite[Chapter VI, Section 3]{Vr}.
	
As in \cite{HLY,YZ}, for $K\ge 2$, a tuple  $ (x_1,\cdots,x_K)$ of $X$ is a sensitive tuple (a sensitive pair if $K=2$) if for open
neighborhoods $U_i$ of $x_i$, $i=1,2,\ldots,K$, and any nonempty open subset $U$ of $X$, there
exists $g\in G$ such that $U\cap (gU_k)\neq\emptyset$ for $k=1, 2,\cdots,K$. Now, under the assumptions of Theorem~\ref{thm123} and assuming that $\pi_{eq}$ is open, we derive the following result. It follows directly from Corollary \ref{cor-B1} (ii), Corollary \ref{cor-S1} (ii), and the fact that each IT-tuple is a sequence entropy tuple, which coincides with an IN-tuple. \cite[Theorem 5.9]{KH}.
	\begin{thm}	Let $(X,G)$ be a minimal tds, where  $G$ is amenable, and $\mathcal{F}$ be a F{\o}lner sequence of $G$. 
		If  $\pi_{eq}$ is open and $(X,G)$ is local Bronstein,  then essential  IT-tuples, essential IN-tuples,  essential  sequence entropy tuples, essential  sensitive tuples, and essential  weakly mean-sensitive tuples along $\mathcal{F}$  all coincide.  
	\end{thm}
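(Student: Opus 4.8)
The plan is to close a cycle of inclusions among the five families of essential $K$-tuples (with $K\ge2$ fixed). The cheap inputs require no hypothesis: by \cite[Theorem 5.9]{KH} every essential IT-tuple is an essential sequence entropy tuple, and the essential sequence entropy tuples are exactly the essential IN-tuples; and directly from the definitions every essential weakly mean-sensitive tuple along $\mathcal F$ is an essential weakly mean-sensitive tuple in the sense of Definition~\ref{defn-mean-sensitive} (upper Banach density dominates upper density along $\mathcal F$), and every essential weakly mean-sensitive tuple is an essential sensitive tuple (a positive-density set of group elements is nonempty). Hence it suffices to establish, under the standing hypotheses, the three implications: \emph{(I)} essential sequence entropy tuple $\Rightarrow$ essential sensitive tuple; \emph{(II)} essential sensitive tuple $\Rightarrow$ essential IT-tuple; \emph{(III)} essential sensitive tuple $\Rightarrow$ essential weakly mean-sensitive tuple along $\mathcal F$. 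Indeed, \emph{(I)}--\emph{(III)} together with the cheap inclusions squeeze all five families into a single one.

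Implications \emph{(I)} and \emph{(II)} are the content of Corollary~\ref{cor-B1}(ii), and the mechanism I would use is the following. Since $(X_{eq},G)$ is equicontinuous it is null and non-sensitive, so it admits no essential sequence entropy tuples and no essential sensitive tuples; as each of the properties of being a sequence entropy tuple or a sensitive tuple passes to factors, every essential sequence entropy or sensitive tuple of $(X,G)$ is confined to a single fiber $\pi_{eq}^{-1}(y_0)$. Because $G$ is amenable, $(X,G)$ carries an invariant probability measure, so condition~(2) of Theorem~\ref{main1'} holds and provides a dense $G_\delta$ set $Z\subseteq X_{eq}$ with $\nu_{eq}(Z)=1$ such that $\pi_{eq}^{-1}(y)$ is an IT-set for all $y\in Z$. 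The local Bronstein condition is then used to propagate the IT-set property from the fibers over $Z$ to the relevant fiber $\pi_{eq}^{-1}(y_0)$ — this rests on the closedness and $G$-invariance of the set of essential IT-tuples together with the hyperspace/local Bronstein arguments underlying Theorem~\ref{main1'} — which gives \emph{(II)}; and once essential sensitive tuples are known to be essential IT-tuples, hence essential sequence entropy tuples, the converse direction \emph{(I)} is the remaining half of Corollary~\ref{cor-B1}(ii). Thus the essential IT-, IN-, sequence entropy, and sensitive tuples all coincide.

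Implication \emph{(III)}, which is Corollary~\ref{cor-S1}(ii), is the step I expect to be the genuine obstacle, since it is the only place where the openness of $\pi_{eq}$ is used essentially: one must promote the purely topological statement ``for every nonempty open $U$ there is a $g\in G$ with $gU\cap U_k\ne\emptyset$ for $1\le k\le K$'' to the quantitative statement that the set of such $g$ has positive upper density along $\mathcal F$. The approach I would take: openness of $\pi_{eq}$ makes the fiber map $y\mapsto\pi_{eq}^{-1}(y)$ continuous (its upper semicontinuity is automatic), so the sensitive behavior realized inside the single fiber $\pi_{eq}^{-1}(y_0)$ containing the tuple can be transported to all fibers over a fixed neighborhood $V$ of $y_0$ in $X_{eq}$; since $(X_{eq},G)$ is equicontinuous, the return times $\{g\in G:\, g\,\pi_{eq}(U)\cap V\ne\emptyset\}$ form a syndetic subset of $G$, whose density along any F{\o}lner sequence is bounded below by a positive constant depending only on $V$, and for such $g$ one realizes $gU\cap U_k\ne\emptyset$ for all $k$ at once. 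This closes the cycle, so the five families coincide; in particular Theorem~\ref{thm123} is subsumed, as a mean-sensitive tuple along $\mathcal F$ is in particular a weakly mean-sensitive tuple along $\mathcal F$.
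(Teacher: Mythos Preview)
Your high-level plan matches the paper exactly: the paper's one-line argument cites Corollary~\ref{cor-B1}(ii), Corollary~\ref{cor-S1}(ii), and the Kerr--Li identification, and implicitly closes the same cycle (equivalently: all five families equal the set of essential tuples contained in a single fibre of $\pi_{eq}$). Two points in your explanation are off, though.

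First, openness of $\pi_{eq}$ is not confined to step~(III); it is the whole reason Corollaries~\ref{cor-B1}(ii) and~\ref{cor-S1}(ii) cover \emph{every} fibre rather than only generic ones. The paper's mechanism is that openness makes $y\mapsto\pi_{eq}^{-1}(y)$ continuous (Lemma~\ref{usc}(2)), so every fibre lies in $\mathcal{X}_{eq}^{\text{meas}}$ (Lemma~\ref{lem-per-xmeas}(1)), and then Theorems~\ref{main1} and~\ref{main2} apply directly. Your proposed propagation from the generic fibres of Theorem~\ref{main1'} via ``closedness and $G$-invariance of IT-tuples'' does not work without openness: upper semicontinuity of $\pi_{eq}^{-1}$ does not guarantee that a given point of $\pi_{eq}^{-1}(y_0)$ is approximated by points from nearby generic fibres, so an arbitrary fibre tuple need not be a limit of IT-tuples. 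Second, implication~(I) is not a consequence of Corollary~\ref{cor-B1}(ii): that corollary together with ``sequence entropy tuples lie in fibres'' yields only sequence-entropy $\subseteq$ IT, not $\subseteq$ sensitive. What actually supplies~(I) is Corollary~\ref{cor-S1}(ii) --- every fibre is a weakly mean-sensitive set, hence a sensitive set --- the same input you already invoke for~(III); once you reroute~(I) through it, your cycle closes and coincides with the paper's.
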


	\medskip
	
	\noindent	{\bf Structure of this paper:} In Section \ref{s-2}, we review basic concepts and prove some lemmas. In Section \ref{sec-isp}, we  find the relation between a minimal tds  and its hyperspace system, identifying a class of closed sets that satisfy the interior saturation property. In Section \ref{IT-ab}, we prove Theorem \ref{main1'}, while Section \ref{s-mean-s-set} focuses on the proof of Theorem \ref{mainB}.

	\section{Preliminaries}\label{s-2}
	Throughout this paper, we denote by  $\N$ and $\Z$ the sets of  natural numbers and integers, respectively. Let $X$ be a compact metric space with a metric $d$.  We denote the interior, closure and boundary of a set $A$ in $X$ by $\text{int}(A)$, $\overline{A}$ and $\partial A$, respectively. 	For $x\in X$, a nonempty subset $V\subset X$ and $\epsilon>0$, denote $$B^X_\epsilon(x)=\{x'\in X:d(x,x')<\epsilon\}\text{ and }B^X_\epsilon(V)=\{x'\in X:d(V,x')<\epsilon\},$$
	where $d(V,x')=\inf_{z\in V}d(z,x')$.  We denote the diameter of a set $A$ by $\text{diam}(A)$. 
	\subsection{Subset of group}  Let $G$ be a countable group. A set $\mathcal{S}\subset G$ is said to be a syndetic set of $G$ if there exists a finite set $K \subset G$ such that
	$Kg\cap \mathcal{S}\neq\emptyset \text{ for all }g\in G.$ A set $\mathcal{S}\subset G$ is said to be a thick set of $G$ if for any finite set $K \subset G$, there exists $g \in G$ such that $Kg \subset \mathcal{S}$. A set $\mathcal{S}\subset G$ is said to be a thickly syndetic set of $G$ if for any finite set $K \subset G$, the set $\{g \in G : Kg \subset \mathcal{S}\}$ is a syndetic set. Denote $\mathcal{F}_s(G)$, $\mathcal{F}_t(G)$, and $\mathcal{F}_{ts}(G)$ as the sets of syndetic, thick, and thickly syndetic sets of $G$, respectively. The following lemma is from \cite[Proposition 3.2]{W} and \cite[Theorem 2.4]{ber}.
	\begin{lem}\label{F} Let $G$ be a  countable  group. Then, the following statements hold.
		\begin{itemize}
			\item[(1)] If $\mathcal{S}_1,\mathcal{S}_2\in  \mathcal{F}_{ts}(G)$, then $\mathcal{S}_1\cap\mathcal{S}_2\in \mathcal{F}_{ts}(G)$.
			\item[(2)]  If $\mathcal{S}_1\in \mathcal{F}_{s}(G)$ and $\mathcal{S}_2\in  \mathcal{F}_{t}(G)$, then $\mathcal{S}_1\cap\mathcal{S}_2\neq\emptyset$.
			\item[(3)]  If $\mathcal{S}_1\in \mathcal{F}_{s}(G)$ and $\mathcal{S}_2\in  \mathcal{F}_{ts}(G)$, then  $\mathcal{S}_1\cap\mathcal{S}_2\in \mathcal{F}_{s}(G)$.
		\end{itemize}
	\end{lem}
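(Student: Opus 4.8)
The plan is to handle the three items in the order (2), (3), (1): items (2) and (3) are immediate from the definitions, and (1) reduces cleanly to (3) once one observes that the operation $\mathcal{S}\mapsto\{g\in G:Kg\subset\mathcal{S}\}$ sends thickly syndetic sets to thickly syndetic sets. Before starting I would fix the only convention-sensitive point, namely the reformulation that $\mathcal{S}\in\mathcal{F}_s(G)$ if and only if there is a finite set $K\subset G$ with $Kg\cap\mathcal{S}\neq\emptyset$ for every $g\in G$: from $F\mathcal{S}=G$ one takes $K=F^{-1}$ (since $g=fs$ with $s\in\mathcal{S}$ gives $s=f^{-1}g\in F^{-1}g\cap\mathcal{S}$), and conversely from such a $K$ one takes $F=K^{-1}$. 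Everything else is bookkeeping with left translates $Kg$, as in the definitions of Section~\ref{s-2}.

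For (2): choose a finite $K\subset G$ with $Kg\cap\mathcal{S}_1\neq\emptyset$ for all $g\in G$; since $\mathcal{S}_2$ is thick there is $g_0\in G$ with $Kg_0\subset\mathcal{S}_2$, and then any element of the nonempty set $Kg_0\cap\mathcal{S}_1$ lies in $\mathcal{S}_1\cap\mathcal{S}_2$. For (3): pick a finite $F$ with $F\mathcal{S}_1=G$; since $\mathcal{S}_2$ is thickly syndetic, $B:=\{g\in G:F^{-1}g\subset\mathcal{S}_2\}$ is syndetic, say $F'B=G$ with $F'$ finite. For $g\in B$ write $g=fs$ with $f\in F$ and $s\in\mathcal{S}_1$; then $s=f^{-1}g\in F^{-1}g\subset\mathcal{S}_2$, so $s\in\mathcal{S}_1\cap\mathcal{S}_2$ and $g\in F(\mathcal{S}_1\cap\mathcal{S}_2)$. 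Hence $B\subset F(\mathcal{S}_1\cap\mathcal{S}_2)$ and $G=F'B\subset F'F(\mathcal{S}_1\cap\mathcal{S}_2)$, so $\mathcal{S}_1\cap\mathcal{S}_2$ is syndetic.

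For (1): fix a finite $K\subset G$; it suffices to show that $B_1\cap B_2$ is syndetic, where $B_i:=\{g\in G:Kg\subset\mathcal{S}_i\}$, because $B_1\cap B_2=\{g\in G:Kg\subset\mathcal{S}_1\cap\mathcal{S}_2\}$ and syndeticity of all such sets is exactly the definition of $\mathcal{S}_1\cap\mathcal{S}_2\in\mathcal{F}_{ts}(G)$. The key observation is that each $B_i$ is again thickly syndetic: for a finite $L\subset G$ one has $\{g\in G:Lg\subset B_i\}=\{g\in G:(KL)g\subset\mathcal{S}_i\}$ with $KL:=\{kl:k\in K,\,l\in L\}$ finite, and the right-hand set is syndetic since $\mathcal{S}_i$ is thickly syndetic. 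In particular $B_1$ is syndetic and $B_2$ is thickly syndetic, so item (3) yields that $B_1\cap B_2$ is syndetic, completing the proof.

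There is no genuine obstacle here; the one thing to be disciplined about is that $G$ need not be abelian, so one must keep left and right translates straight and be careful about when a finite set gets inverted. Since the definitions use left translates $Kg$ throughout, the only place where this matters is the identity $\{g:Lg\subset B_i\}=\{g:(KL)g\subset\mathcal{S}_i\}$ in the proof of (1), which rests on associativity of the product of finite subsets.
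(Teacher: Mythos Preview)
Your proof is correct. The paper does not actually prove Lemma~\ref{F}; it simply cites \cite[Proposition~3.2]{W} and \cite[Theorem~2.4]{ber}, so there is no in-paper argument to compare against. Your self-contained treatment---doing (2) and (3) directly from the definitions and then reducing (1) to (3) via the observation that $\mathcal{S}\mapsto\{g:Kg\subset\mathcal{S}\}$ preserves thick syndeticity---is standard and clean, and your care with left versus right translates in the non-abelian setting is appropriate.
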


	Recall that a countable group $G$ is amenable (see \cite{F}) if and only if there exists a sequence of finite, nonempty subsets $\mathcal{F} = \{F_n\}_{n=1}^\infty$ of $G$, called a F\o lner sequence, such that $$\lim_{n \to \infty} \frac {|gF_n\Delta F_n|}{|F_n|} = 0\text{ for every $g\in G$.}$$  Given a subset $F$ of $G$, the upper
	and lower
	Banach density of $F$ are defined, respectively, by
	$$\overline{BD}(F)=\max_{\mathcal{F}}\limsup_{n\to\infty}\frac{|F_n\cap F|}{|F_n|}$$
	and
	$$\underline{BD}(F)=\min_{\mathcal{F}}\liminf_{n\to\infty}\frac{|F_n\cap F|}{|F_n|},$$
	where $\mathcal{F}=\{F_n\}_{n=1}^\infty$ takes over all F{\o}lner sequences of $G$. It is clear that $\underline{BD}(F)\le \overline{BD}(F)$ for any $F\subset G$. When a set
	is denoted with braces, for example $\{A\}$, we simply write $\underline{BD}\{A\} =\underline{BD}(\{A\})$ and $\overline{BD}\{A\} =\overline{BD}(\{A\})$.

	\subsection{Topological dynamical system}\label{sb-2}	 A tds $(X,G)$ is called minimal if $X$ 
	contains no proper nonempty closed invariant
	subsets.  We denote the orbit of $x\in X$ by
	$Orb(x, G):=\{gx:g\in G\}.$
	A point $x\in X$ is called minimal if the system $(\overline{Orb(x, G)},G)$ is minimal. Denote by $Min(X,G)$ the set of all minimal points of $(X,G)$, which is nonempty.  The closure of $Min(X,G)$ in $X$ is called the minimal center of $(X,G)$.
	A subset $M$ of $X$ is a minimal set if $M=\overline{Orb(x, G)}$ for some minimal point $x$ of $(X,G)$. 	 For a subset $U$ of $X$ and $x\in X$, we denote
	\[N(x,U)=\{g\in G:gx\in U\}.\]
	The following results are well-known (see e.g.  \cite{Furbook}).
	\begin{lem}\label{syndetic}Let $(X,G)$ be a tds. Then, the following statements hold:
		\begin{itemize}
			\item[(1)] If $x\in X$ is a minimal point, then for any open neighborhood $V$ of $x$, $N(x,V)\in\mathcal{F}_s(G)$.
			\item[(2)]If $\mu$ is an invariant probability measure of $(X,G)$, then for any measurable set $V\subset X$ with $\mu(V)>0$,  $\{g\in G: \mu(gV\cap V)>0\}\in\mathcal{F}_s(G)$.
			\item[(3)] Let $x\in X$, and $U\subset X$ be an open neighborhood  of $\overline{Min(X,G)}$. Then, $N(x,U)\in \mathcal{F}_{ts}(G)$.
			\item[(4)]Let $x\in X$, and let $M\subset \overline{Orb(x,G)}$ be a minimal set. Then, for any open neighborhood $U$ of $M$, $N(x,U)\in \mathcal{F}_{t}(G)$.
		\end{itemize}
	\end{lem}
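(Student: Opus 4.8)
The plan is to prove the four assertions separately, using the two standard devices of elementary topological dynamics: a finite-subcover (pigeonhole) argument for the three topological items, and a disjointification of translates for the measure-theoretic one. Recall the paper's convention: $\mathcal{S}$ is syndetic iff there is a finite $K\subset G$ with $Kg\cap\mathcal{S}\neq\emptyset$ for all $g$.

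For (1) I would set $M=\overline{Orb(x,G)}$, which is minimal since $x$ is a minimal point. For every $y\in M$ the dense orbit $Orb(y,G)$ meets the nonempty relatively open set $V\cap M$, so $\{g^{-1}V:g\in G\}$ is an open cover of the compact set $M$; extracting a finite subcover $g_1^{-1}V,\dots,g_n^{-1}V$ and using $hx\in M$ for arbitrary $h\in G$, we get $g_ih\in N(x,V)$ for some $i$, so $K=\{g_1,\dots,g_n\}$ witnesses syndeticity. For (3) and (4) I would reduce to exactly this fact. The key observation is that the minimal center $\overline{Min(X,G)}$, and any minimal set $M$, are $G$-invariant, so for a finite $K\subset G$ the open set $U_K:=\bigcap_{k\in K}k^{-1}U$ still contains $\overline{Min(X,G)}$ (resp.\ $M$), and $\{g\in G:Kg\subset N(x,U)\}=N(x,U_K)$. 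Hence (3) follows once I show $N(x,W)\in\mathcal{F}_s(G)$ for every open $W\supseteq\overline{Min(X,G)}$: every point $y\in\overline{Orb(x,G)}$ has a minimal set in $\overline{Orb(y,G)}$, that set lies inside $\overline{Min(X,G)}\subseteq W$, so the open set $W$ meets $Orb(y,G)$; thus $\{g^{-1}W\}$ covers the compact set $\overline{Orb(x,G)}$ and the pigeonhole step of (1) applies. Likewise (4): since $U_K$ is a nonempty open set meeting $\overline{Orb(x,G)}$ (it contains $M$), there is $g$ with $gx\in U_K$, i.e.\ $Kg\subset N(x,U)$, and as $K$ was arbitrary $N(x,U)\in\mathcal{F}_t(G)$.

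Item (2) is the one needing a genuinely different idea. Here I would argue by contradiction: assuming $\mathcal{S}=\{g\in G:\mu(gV\cap V)>0\}$ is not syndetic, I inductively choose $h_1=e,h_2,h_3,\dots$ in $G$ with $h_i^{-1}h_j\notin\mathcal{S}$ whenever $i<j$, applying at the $n$-th step the failure of syndeticity to the finite set $\{h_1^{-1},\dots,h_n^{-1}\}$ to produce $h_{n+1}$ with $h_i^{-1}h_{n+1}\notin\mathcal{S}$ for $1\le i\le n$. Then $G$-invariance of $\mu$ gives $\mu(h_iV\cap h_jV)=\mu(V\cap h_i^{-1}h_jV)=0$ for $i<j$, so the $h_iV$ are pairwise null-intersecting measurable sets each of measure $\mu(V)>0$, forcing $\mu(X)=\infty$, a contradiction. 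Hence $\mathcal{S}\in\mathcal{F}_s(G)$.

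The main (and essentially only) obstacle is item (2): the three topological parts are just bookkeeping around the minimal/quasi-minimal covering lemma used in (1) --- the reductions via $U_K$ and $G$-invariance of the minimal center are routine --- whereas (2) requires the observation that a non-syndetic return-time set would let one pack infinitely many essentially disjoint positive-measure translates of $V$ into a probability space. No step should require more than these standard manipulations.
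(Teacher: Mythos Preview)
Your proof is correct in all four parts; the arguments are the standard ones for these well-known facts. The paper itself does not supply a proof of this lemma at all --- it simply states that the results are well known and cites Furstenberg's book --- so there is no approach in the paper to compare yours against.
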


	A factor map $\pi: X\to Y$ between two tdss $(X,G)$ and $(Y,G)$ is a continuous onto map which intertwines the actions. In this case, we say that $(Y, G)$ is a factor of $(X, G)$, and $(X, G)$ is an extension of $(Y, G)$. 	The following results are well-known (see e.g. \cite[p. 21, Proposition 14]{A1}).
	\begin{lem} Let $\pi: (X,G)\to (Y, G)$ be a factor map. Then the following statements hold:
		\begin{itemize}
			\item[(1)] $\pi(Min(X,G))=Min(Y,G)$; 
			\item[(2)] If $(X,G)$ is minimal, then $\pi$ is semi-open, i.e., for any nonempty open subset $U$ of $X$, $\pi(U)$ has nonempty interior.
		\end{itemize} 
	\end{lem}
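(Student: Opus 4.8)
The plan is to prove the two assertions separately, using only compactness of $X$, continuity and equivariance of $\pi$, Zorn's lemma (existence of minimal subsets), the surjectivity of $\pi$, and the Baire category theorem for the compact metric space $Y$.

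For (1), I would first observe that $\pi$ carries orbit closures to orbit closures: for every $x\in X$ one has $\pi(\overline{Orb(x,G)})=\overline{Orb(\pi(x),G)}$, since continuity gives $\pi(\overline{Orb(x,G)})\subseteq\overline{\pi(Orb(x,G))}=\overline{Orb(\pi(x),G)}$, while $\pi(\overline{Orb(x,G)})$ is compact, hence closed, and contains $Orb(\pi(x),G)$, which yields the reverse inclusion. Next I would record the auxiliary fact that a factor of a minimal system is minimal: if $M$ is a minimal set and $\emptyset\neq C\subseteq\pi(M)$ is closed and $G$-invariant, then $\pi^{-1}(C)\cap M$ is a nonempty closed $G$-invariant subset of $M$, hence equals $M$, forcing $C=\pi(M)$. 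Combining these, if $x\in Min(X,G)$ then $\overline{Orb(x,G)}$ is minimal, so $\overline{Orb(\pi(x),G)}=\pi(\overline{Orb(x,G)})$ is minimal, i.e.\ $\pi(x)\in Min(Y,G)$; this gives $\pi(Min(X,G))\subseteq Min(Y,G)$. For the reverse inclusion, take $y\in Min(Y,G)$, pick any $x_0\in\pi^{-1}(y)$, and choose by Zorn's lemma a minimal set $M\subseteq\overline{Orb(x_0,G)}$; then $\pi(M)$ is a nonempty closed $G$-invariant subset of $\pi(\overline{Orb(x_0,G)})=\overline{Orb(y,G)}$, and minimality of the latter forces $\pi(M)=\overline{Orb(y,G)}\ni y$, so some $x\in M\subseteq Min(X,G)$ satisfies $\pi(x)=y$.

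For (2), assume $(X,G)$ is minimal and let $U\subseteq X$ be nonempty open. Since $X$ is a compact metric space, I would fix a nonempty open set $V$ with $\overline{V}\subseteq U$. By minimality the translates $\{gV:g\in G\}$ cover $X$, so by compactness $X=\bigcup_{i=1}^{n}g_iV$ for finitely many $g_1,\dots,g_n\in G$; applying the onto map $\pi$ gives $Y=\bigcup_{i=1}^{n}g_i\pi(V)\subseteq\bigcup_{i=1}^{n}g_i\overline{\pi(V)}$. Each $g_i\overline{\pi(V)}$ is closed, so the Baire category theorem forces some $g_i\overline{\pi(V)}$, hence $\overline{\pi(V)}$ itself (translation being a homeomorphism), to have nonempty interior. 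Finally $\overline{\pi(V)}\subseteq\pi(\overline{V})\subseteq\pi(U)$, because $\pi(\overline{V})$ is compact and contains $\pi(V)$, and $\overline{V}\subseteq U$; therefore $\text{int}(\pi(U))\neq\emptyset$, i.e.\ $\pi$ is semi-open.

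I expect the only delicate point — and the step most easily mishandled — to be the reduction in (2) to a subneighborhood $V$ with $\overline{V}\subseteq U$ before invoking Baire: the category argument produces a nonempty interior only for the \emph{closure} $\overline{\pi(V)}$, and without the inclusion $\overline{\pi(V)}\subseteq\pi(U)$ this tells us nothing about $\pi(U)$ itself. Everything else is a routine chase with closed $G$-invariant sets.
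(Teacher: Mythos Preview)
Your proof is correct. Both parts follow the standard arguments: orbit closures map to orbit closures and minimality passes to factors for (1), and a Baire-category trick on a finite subcover for (2), with the careful shrinking $\overline{V}\subseteq U$ handled properly. Note that the paper itself does not prove this lemma; it simply cites it as well known (referencing Auslander's book), so there is no proof in the paper to compare against, but your argument is the canonical one for this result.
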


	For $K\in\N\cup\{\infty\}$, a factor map $\pi: (X,G)\to (Y,G)$ is almost $K$ to one if $\{y\in Y :|\pi^{-1}(y)|=K\}$ is a residual subset\footnote{A residual set is a subset that contains a countable intersection of dense open sets.} of $Y$. In particular, $\pi$ is said to be almost finite to one if this property holds for  some $K\in\N$. It follows from  \cite[Proposition 2.15]{HLSY} that if $(Y,G)$ is a minimal tds
 then  $\pi: (X,G)\to (Y,G)$ is almost $K$ to one, where 	$K=\min_{y\in Y}|\pi^{-1}(y)|\in\N\cup\{\infty\}$.

	The notion of $K$-regionally proximal relation, introduced in \cite{A} is defined for $K\ge 2$ by
	\begin{align*}
		RP_K&(X, G)=\{(x_i)_{i=1}^K
		\in X^K :\text{ for any }\epsilon > 0\text{ there exist }x_k'
		\in X\text{ and }g \in G\\
		&\text{ with } d(x_k, x_k')\le \epsilon  (1 \le  k \le K),
		\text{ and }d(gx'_{k_1}, gx_{k_2}')\le \epsilon (1\le k_1\le k_2\le K)\}.
	\end{align*}
	
	\begin{lem}\label{lem-rp}Let $(X,G)$ be a minimal tds, and  $(x_1, x_2,\cdots , x_K)\in RP_K(X,G)$. Then, for any $\epsilon>0$ and nonempty open set $U\subset X$, there exist $h\in G$ and nonempty open subsets $V_k\subset B_\epsilon^X(x_k)$ for $k=1,2,\cdots,K$, such that 
		$$hV_k\subset U,\text{ for all }k=1,2,\cdots,K.$$
	\end{lem}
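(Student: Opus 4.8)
The plan is to unwind the definition of $RP_K$ and combine it with minimality of $(X,G)$ to ``pull'' the approximate coincidence back into the prescribed open set $U$. First I would fix $\epsilon>0$ and a nonempty open set $U\subset X$. Pick any point $x_0\in U$ and choose $\eta>0$ so that $B^X_{2\eta}(x_0)\subset U$. Since $(X,G)$ is minimal, $Orb(x_0,G)$ is dense in $X$; in particular there is $g_0\in G$ with $g_0 x_0$ arbitrarily close to any target point we want. The idea is that the definition of $RP_K$ produces points $x_k'$ near the $x_k$ and an element $g$ that brings all the $g x_k'$ within $\epsilon'$ of each other (for a small $\epsilon'$ to be chosen); this common near-value is some point $z\in X$, and by minimality we can apply a further group element $h_0$ to send $z$ into a small ball around $x_0$, hence into $U$.

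The key steps, in order:
\begin{enumerate}
\item Choose $\epsilon'>0$ small (depending on $\epsilon$, on $\eta$, and on a modulus of uniform continuity for the maps $x\mapsto h_0 x$ that will appear) with $\epsilon'\le\epsilon$.
\item Apply the definition of $RP_K(X,G)$ to the tuple $(x_1,\dots,x_K)$ with tolerance $\epsilon'$: obtain points $x_k'\in B^X_{\epsilon'}(x_k)$ and $g\in G$ such that $d(g x_{k_1}',g x_{k_2}')\le\epsilon'$ for all $1\le k_1\le k_2\le K$. Set $z:=g x_1'$, so all $g x_k'$ lie in $B^X_{(K-1)\epsilon'}(z)$ (or simply in a small ball around $z$).
\item Use minimality: the set $\{g''\in G : g'' z\in B^X_{\eta}(x_0)\}$ is nonempty (indeed, it is syndetic by Lemma~\ref{syndetic}(1) once we note $z$ is in the minimal space $X$), so pick $h_0$ in it; then $h_0 z\in B^X_\eta(x_0)\subset U$.
\item Uniform continuity of $y\mapsto h_0 y$ gives $\rho>0$ with $d(h_0 y, h_0 z)<\eta$ whenever $d(y,z)<\rho$; ensure in step~(1) that $\epsilon'$ was chosen small enough (after $h_0$ is known this is the standard ``choose constants in the right order'' bookkeeping — formally one runs the argument to find $h_0$, then shrinks the open sets below) so that $h_0(g x_k')\in B^X_{2\eta}(x_0)\subset U$ for every $k$.
\item Set $h:=h_0 g$. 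Then $h x_k'\in U$ for all $k$. Finally, for each $k$ choose a nonempty open neighborhood $V_k$ of $x_k'$ with $V_k\subset B^X_\epsilon(x_k)$ (possible since $x_k'\in B^X_{\epsilon'}(x_k)\subset B^X_\epsilon(x_k)$ and this ball is open) and small enough that $h V_k\subset U$ (possible by continuity of $y\mapsto h y$ and openness of $U$). This yields the claim.
\end{enumerate}

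The main obstacle is the ordering of the constants: the neighborhood size $\rho$ controlling how much $h_0$ distorts distances depends on $h_0$, which is itself produced only after we have applied the definition of $RP_K$. The clean way around this is to not fix $\epsilon'$ prematurely: first apply the $RP_K$ definition with \emph{some} tolerance to get $z$ and then $h_0$; then go back, observe that all the relevant estimates are open conditions, and choose the neighborhoods $V_k$ (rather than a preliminary $\epsilon'$) small enough at the very end so that $h V_k\subset U$. Alternatively, one can absorb everything into the final step: having found $h_0$ and hence $h=h_0g$, the map $y\mapsto hy$ is a homeomorphism, $h x_k'\in U$ for each $k$, and $U$ is open, so one simply takes $V_k$ to be a sufficiently small open neighborhood of $x_k'$ contained in $h^{-1}(U)\cap B^X_\epsilon(x_k)$ — no uniform continuity argument is needed at all, only continuity of a single homeomorphism. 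This second route is shorter and is the one I would write up.
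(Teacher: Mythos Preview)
Your overall strategy is the same as the paper's, but the circularity you flag in step~(4) is real and your proposed fixes do not resolve it. The issue is this: once you apply the $RP_K$ definition with tolerance $\epsilon'$, the data $(x_k',g,z)$ are fixed; the element $h_0$ is then chosen depending on $z$, and the radius $\rho$ controlling how much $h_0$ distorts distances depends on $h_0$. Your first fix (``run the argument, then shrink $\epsilon'$'') does not work, because shrinking $\epsilon'$ forces a \emph{new} application of the $RP_K$ definition, producing new $x_k'$, a new $g$, and hence a new $z$, so the old $h_0$ and $\rho$ are no longer relevant. Your second fix (``just take $V_k$ small at the end'') presupposes $h x_k'\in U$ for \emph{all} $k$, but you have only established this for $k=1$; for $k\neq 1$ it requires exactly the estimate that depends on $\rho$.

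The paper breaks the circularity with a single device: before touching $RP_K$, use minimality to view $\{gU:g\in G\}$ as an open cover of $X$ and take a Lebesgue number $\gamma>0$. Now apply the $RP_K$ definition with tolerance $\min(\epsilon,\gamma)$ to get $x_k'\in B^X_\epsilon(x_k)$ and $h'\in G$ with $d(h'x_{k_1}',h'x_{k_2}')<\gamma$ for all $k_1,k_2$. The set $\{h'x_1',\dots,h'x_K'\}$ then has diameter $<\gamma$, so it lies in some $gU$; put $h=g^{-1}h'$ and you have $hx_k'\in U$ for every $k$ simultaneously, with no dependence of the tolerance on $h$. Your final step~(5) then goes through verbatim. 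The Lebesgue number is precisely the ``uniform in $z$'' choice of $h_0$ that your argument was missing.
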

	\begin{proof}Since $(X,G)$ is minimal,  $\{gU:g\in G\}$ is an open cover of $X$. Let $\gamma>0$ be a Lebesgue number of $\{gU:g\in G\}$. Since   $(x_1, x_2,\cdots , x_K)\in RP_K(X,G)$, there exist $h'\in G$ and $ x'_k\in B_\epsilon^X(x_k)$ for $k=1,2,\cdots,K$,  such that $$d(h'x'_k,h'x'_{k'})<\gamma,1\le k\le k'\le K.$$
		Thus, there is $g\in G$ such that $$h'x'_k\in gU\text{ for all }1\le k\le K.$$
		This implies $g^{-1}h'x'_k\in U$ for $1\le k\le K$. Let  $h=g^{-1}h'$. Then by the continuity of $h$, there exist nonempty open neighborhood $V_k$ of $x_k'$ with $V_k\subset B_\epsilon^X(x_k)$ for $k=1,2,\cdots,K$, such that 
		$hV_k\subset U,\text{ for all }k=1,2,\cdots,K.$
	\end{proof}
	
	A tds $(X,G)$ is equicontinuous if for any $\epsilon > 0$, there is $\delta > 0$ such that whenever $x, y \in X$ with $d(x, y) < \delta$, then $d(gx,gy) < \epsilon$ for all $g \in G$. There is a smallest invariant equivalence relation $S_{eq}$ such that the quotient system $(X/S_{eq},G)$ is equicontinuous \cite{EG}. The equivalence relation $S_{eq}$ is called the equicontinuous structure relation, and the factor $(X/S_{eq},G)$ is called the maximal equicontinuous factor of $(X,G)$. 	In this paper, we denote  $X_{eq}=X/S_{eq}$  and  $\pi_{eq}:(X,G)\to(X_{eq},G)$ is the corresponding factor map.  It is well known that
	$(X_{eq}, G)$ is both minimal and uniquely ergodic. The ergodic measure is denoted by $\nu_{eq}$, and the metric on $X_{eq}$ is denoted by $d_{eq}$. Notice that  using an equivalent
	metric we always assume that $(X_{eq},G)$ is an isometry, that is, $d_{eq}(y,y')=d_{eq}(gy,gy')$ for $y,y'\in X_{eq}, g\in G$.  
	
	We remark that $S_{eq}$ is the smallest closed $G$-invariant equivalence relation containing the $2$-regionally proximal relation $RP_2(X,G)$.  It is easy to see that $RP_2(X,G)$ is an equivalence relation if and only if $S_{eq} = RP_2(X,G)$. In many cases, $RP_2(X,G)$ is an equivalence relation. For example, $(X,G)$ admits an invariant probability  measure \cite{Mc}, or $(X,G)$ is local Bronstein \cite{A}.
	\begin{thm}[\text{\cite[Theorem 8]{A}}]\label{rp}Let $(X,G)$ be a minimal tds, and meets  the local Bronstein condition. For $K\ge 2$, if $x_1, x_2,\cdots, x_K \in X$ satisfy $(x_1, x_k)\in RP_2(X,G)$
		for $2\le k\le K$, then $(x_1, x_2,\cdots , x_K)\in RP_K(X,G)$.
	\end{thm}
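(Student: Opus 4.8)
\emph{Proof strategy.} I would prove this by induction on $K$, with $K=2$ being the hypothesis itself, after two preliminary reductions. First, since $(X,G)$ is local Bronstein, $RP_2(X,G)=S_{eq}$ is a closed $G$-invariant equivalence relation; hence $(x_1,x_k)\in RP_2(X,G)$ for $2\le k\le K$ already places all of $x_1,\dots,x_K$ in one fibre $\pi_{eq}^{-1}(y)$, and, since projecting the definition of $RP_K$ onto any two coordinates yields the unconditional inclusion $RP_K(X,G)\subseteq\{(z_i)_{i=1}^K:(z_i,z_j)\in RP_2(X,G)\ \forall\, i,j\}$, only the reverse inclusion needs proof. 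Second, by Lemma \ref{lem-rp} and its (routine) converse — obtained by taking the target open set of diameter $<\epsilon$ — membership $(z_1,\dots,z_K)\in RP_K(X,G)$ is equivalent to: for every $\epsilon>0$ and every nonempty open $U\subseteq X$ there exist $g\in G$ and nonempty open $V_i\subseteq B_\epsilon^X(z_i)$ with $gV_i\subseteq U$ for all $i$. The theorem thus reduces to producing, for given $\epsilon$ and $U$, a single $g$ that contracts neighbourhoods of all of $x_1,\dots,x_K$ into $U$.

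For the inductive step I would assume $(x_1,\dots,x_{K-1})\in RP_{K-1}(X,G)$ (the inductive hypothesis for these $K-1$ points) and $(x_1,x_K)\in RP_2(X,G)$, and seek to merge the two. Here the local Bronstein condition is essential: choose a minimal point $(a,b)$ of $(X^2,G)$ lying in $\overline{Orb((x_1,x_K),G)}$ (so $(a,b)\in RP_2(X,G)$ because that relation is closed and $G$-invariant), and apply the local Bronstein condition to it, obtaining minimal points $(a_n,b_n)\to(a,b)$ of $(X^2,G)$ and elements $g_n\in G$ with $d(g_na_n,g_nb_n)\to 0$. The gain is that the regionally proximal pair $(x_1,x_K)$ has been replaced by nearby \emph{proximal minimal} pairs, whose contraction is realized by honest group elements $g_n$, and whose minimality gives, via Lemma \ref{syndetic} and the intersection properties of syndetic/thick/thickly syndetic sets in Lemma \ref{F}, enough return-time control to reposition the entire $K$-tuple near a configuration in which (using the $RP_{K-1}$-property to pre-cluster $x_1,\dots,x_{K-1}$) the first $K-1$ coordinates are already lumped together within $o(1)$ of $a_n$ while the $K$-th coordinate sits within $o(1)$ of $b_n$; then $g_n$, being a homeomorphism, contracts a small enough ball about $a_n$ into a small ball about $g_na_n$, so it drives the whole lumped block \emph{and} the $K$-th coordinate (which lands near $g_nb_n\approx g_na_n$) into an arbitrarily small set, which pulled back a bounded number of steps lies in $U$. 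One must choose the clustering and repositioning radii only after $n$ and $g_n$ are fixed, so that all auxiliary neighbourhoods stay inside the required $\epsilon$-balls.

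The heart of the matter — and the step I expect to be the main obstacle — is precisely this merging: whenever one applies a group element to bring some coordinates together, the remaining coordinates move uncontrollably, so the $RP_{K-1}$-witness for $(x_1,\dots,x_{K-1})$ and the $RP_2$-witness for $(x_1,x_K)$ cannot be combined naively; this is exactly why the statement fails without a hypothesis like local Bronstein (cf.\ Remark \ref{rem-1}). The role of local Bronstein is to replace the abstract regionally proximal pair by genuine proximal minimal pairs, so that the contraction becomes an actual group translation with \emph{localizable} effect and can be synchronized with the separately arranged clustering of the other coordinates. The remaining work — tracking the nested perturbations through the repositioning and limiting steps so that every neighbourhood stays within its prescribed $\epsilon$-ball — is routine bookkeeping.
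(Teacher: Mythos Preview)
The paper does not supply its own proof of this theorem: it is quoted from \cite[Theorem 8]{A}, and the remark immediately following it refers the reader to \cite[Remark 7.2]{LY} for a proof under the local Bronstein hypothesis as stated. There is therefore no in-paper argument to compare your proposal against.

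On the proposal itself: your reductions and the inductive scheme are sound, and you have correctly isolated the merging step as the heart of the matter. But your sketch of that step contains a genuine gap. You claim you can ``reposition the entire $K$-tuple'' so that $x_1,\dots,x_{K-1}$ sit within $o(1)$ of $a_n$ while $x_K$ sits within $o(1)$ of $b_n$; this requires a \emph{single} group element $h$ achieving both simultaneously, and you do not produce one. The $RP_{K-1}$ witness clusters $x_1,\dots,x_{K-1}$ but gives no control over $hx_K$; conversely, the times taking $(x_1,x_K)$ near the minimal set containing $(a,b)$ (Lemma \ref{syndetic}(4)) say nothing about the images of $x_2,\dots,x_{K-1}$, and in any case only land you near that minimal set, not near the specific pair $(a_n,b_n)$. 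Your appeal to Lemma \ref{F} does not close this, because the two families of times you would need to intersect live in different product systems ($X^{K-1}$ versus $X^2$) and you have not shown either to be of the right syndetic/thick type \emph{for the full $K$-tuple}. This is exactly the obstruction you yourself flag in the last paragraph, and invoking minimality of $(a_n,b_n)$ does not by itself dissolve it: minimality gives syndetic returns of $(a_n,b_n)$ to its own neighbourhoods, not syndetic hitting times from $(x_1,\dots,x_K)$. The published proofs close this gap either through the Ellis-group description of $RP_K$ (as in \cite{A}) or through a more careful iteration than the one you outline; as written, your synchronization step is a restatement of the difficulty rather than a resolution of it.
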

	\begin{rem}
		The original condition in \cite[Theorem 8]{A} is a group-theoretic condition on the Ellis group of the system, which is weaker than the condition stated in Theorem \ref{rp}; see \cite[Remark 7.2]{LY} for a proof of Theorem \ref{rp}.
	\end{rem}
	\begin{rem}If the local Bronstein condition in Theorem \ref{rp} can be replaced by the assumption that $(X, G)$ admits an invariant  probability measure, then the requirement that $(X, G)$ is local Bronstein in Theorem \ref{main1'} (2) becomes redundant. Notably, if this replacement holds, it would confirm the Conjecture \ref{conj1}.
		\end{rem}

	\subsection{System on the Hyperspace}\label{hspace}Let $X$ be a compact metric space with metric $d$, and $2^X$ be the set of nonempty
	closed subsets of $X$ endowed with the Hausdorff metric $d_H$, which can be  defined by
	$$d_H(A, B)
	= \max\{\max_{a\in A}
	d(a, B),\max_{b\in B}
	d(b, A)\},$$
	where $d(x, A) = \inf_{y\in A} d(x, y)$ for $x\in X$ and $A\subset X$. Then $(2^X,d_H)$ is a compact metric space, as $X$ is a compact  metric space. 
	We say a sequence  $\{A_i\}_{i=1}^\infty$ in $2^X$
	converges to $A\in 2^X$, denoted by $\lim_{i\to\infty} A_i = A$, if
	$$\lim_{i\to\infty}d_H(A_i,A)=0.$$
	The following results are well known (see e.g. \cite{Ku,Ku2}).
	\begin{lem}\label{usc} Let $X,Y$ be compact metric spaces and $\pi:X\to Y$ be a continuous surjection. Then we have the following statements:
		\begin{enumerate}
			\item $\pi^{-1}:Y\to 2^X, y\mapsto \pi^{-1}(y)$ is upper semi-continuous, and hence the continuity points of $\pi^{-1}$ form a dense $G_\delta$ subset of $Y$;
			\item $\pi^{-1}$  is continuous if and only if $\pi$ is open.
		\end{enumerate} 
	\end{lem}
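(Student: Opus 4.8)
I would prove the two assertions separately, deriving both from sequential compactness of $X$ and from a classical Baire-category fact about scalar semicontinuous functions (the statements themselves being standard, cf.\ \cite{Ku,Ku2}). First I would check that $\pi^{-1}$ is u.s.c., i.e.\ that for every $y\in Y$ and every open $U\supseteq\pi^{-1}(y)$ there is a neighbourhood $V$ of $y$ with $\pi^{-1}(y')\subseteq U$ for all $y'\in V$. If this failed there would be $y_k\to y$ and $x_k\in\pi^{-1}(y_k)\setminus U$; passing to a convergent subsequence $x_k\to x$ (compactness of $X$) we would get $x\notin U$ while $\pi(x)=\lim_k\pi(x_k)=\lim_k y_k=y\in\pi^{-1}(y)\subseteq U$, a contradiction. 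For the residual-continuity statement I would reduce to scalars: for each $x\in X$ the function $\phi_x\colon Y\to\R$, $\phi_x(y)=d(x,\pi^{-1}(y))$, is lower semicontinuous, since u.s.c.\ of $\pi^{-1}$ gives $\pi^{-1}(y_k)\subseteq B^X_\delta(\pi^{-1}(y))$ eventually whenever $y_k\to y$, hence $\liminf_k\phi_x(y_k)\ge\phi_x(y)-\delta$ for every $\delta>0$. As $Y$ is compact metric, hence a Baire space, and a lower semicontinuous real function on a Baire space is continuous on a dense $G_\delta$ set, each $\phi_x$ has a residual continuity set $C_x\subseteq Y$; fixing a countable dense set $\{x_j\}\subseteq X$ and putting $C:=\bigcap_j C_{x_j}$ (still residual), it suffices to show $\pi^{-1}$ is continuous at each point of $C$, because the set of continuity points of $\pi^{-1}$ is a $G_\delta$ containing the dense set $C$.

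The delicate point is upgrading continuity of the scalar functions to continuity in the Hausdorff metric. Using the elementary identity $d_H(A,B)=\sup_{x\in X}|d(x,A)-d(x,B)|$ for $A,B\in 2^X$, set $h_{y'}(x)=d(x,\pi^{-1}(y'))-d(x,\pi^{-1}(y))$, so that $d_H(\pi^{-1}(y'),\pi^{-1}(y))=\sup_{x\in X}|h_{y'}(x)|$. The family $\{h_{y'}\}_{y'\in Y}$ is uniformly $2$-Lipschitz in $x$, hence equicontinuous; given $\epsilon>0$ I would cover $X$ by finitely many balls of radius $\epsilon/4$ centred at points $x_{j_1},\dots,x_{j_N}$ of $\{x_j\}$, and use continuity of $\phi_{x_{j_1}},\dots,\phi_{x_{j_N}}$ at $y\in C$ to obtain a neighbourhood $V$ of $y$ with $|h_{y'}(x_{j_i})|=|\phi_{x_{j_i}}(y')-\phi_{x_{j_i}}(y)|<\epsilon/2$ for all $y'\in V$ and all $i$. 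Then for $y'\in V$ and arbitrary $x\in X$, choosing $x_{j_i}$ within $\epsilon/4$ of $x$ gives $|h_{y'}(x)|<\epsilon$, whence $d_H(\pi^{-1}(y'),\pi^{-1}(y))\le\epsilon$, so $\pi^{-1}$ is continuous at $y$. This finishes (1).

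For (2), if $\pi$ is open I would show $\pi^{-1}$ is also lower semicontinuous, which together with u.s.c.\ yields continuity: for $y\in Y$ and $\epsilon>0$, cover the compact fibre $\pi^{-1}(y)$ by balls $B^X_{\epsilon/2}(x_i)$ with $x_i\in\pi^{-1}(y)$; each $\pi(B^X_{\epsilon/2}(x_i))$ is open and contains $y$, so on the neighbourhood $V=\bigcap_i\pi(B^X_{\epsilon/2}(x_i))$ every fibre $\pi^{-1}(y')$ meets each $B^X_{\epsilon/2}(x_i)$, which forces $\pi^{-1}(y)\subseteq B^X_{\epsilon}(\pi^{-1}(y'))$. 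Conversely, assume $\pi^{-1}$ is continuous and let $U\subseteq X$ be open (the case $U=X$ being trivial); to see $\pi(U)$ is open, fix $y_0\in\pi(U)$, pick $x_0\in U\cap\pi^{-1}(y_0)$, and put $\epsilon=d(x_0,X\setminus U)>0$. Continuity of $\pi^{-1}$ at $y_0$ yields $\delta>0$ with $d_Y(y,y_0)<\delta$ implying $d_H(\pi^{-1}(y),\pi^{-1}(y_0))<\epsilon$ (where $d_Y$ is a metric on $Y$); then $x_0\in\pi^{-1}(y_0)\subseteq B^X_{\epsilon}(\pi^{-1}(y))$, so some $x\in\pi^{-1}(y)$ has $d(x_0,x)<\epsilon=d(x_0,X\setminus U)$, forcing $x\in U$ and hence $y=\pi(x)\in\pi(U)$. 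Thus $\pi(U)$ is open.

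The only genuine obstacle I anticipate is the residual-continuity half of (1): upper semicontinuity of $\pi^{-1}$ and both implications in (2) are short compactness arguments, whereas passing from ``u.s.c.'' to ``continuous on a dense $G_\delta$'' requires both the classical Baire-category theorem for scalar lower semicontinuous functions and the equicontinuity argument above to lift continuity of the fibre-distance functions $\phi_x$ back to Hausdorff-metric continuity of $y\mapsto\pi^{-1}(y)$.
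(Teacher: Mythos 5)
The paper does not prove this lemma; it is quoted as ``well known'' with a pointer to Kuratowski \cite{Ku,Ku2}, so there is no in-text argument to compare against. Your proof is correct and self-contained. The upper semicontinuity and both directions of (2) are the standard compactness arguments, and your route to the residual-continuity part is the right one: reduce to the scalar functions $\phi_x(y)=d(x,\pi^{-1}(y))$, which are lower semicontinuous precisely because $\pi^{-1}$ is u.s.c., invoke the Baire-category theorem on each $\phi_x$, intersect over a countable dense set $\{x_j\}$, and then use the identity $d_H(A,B)=\sup_{x\in X}\lvert d(x,A)-d(x,B)\rvert$ together with the uniform Lipschitz bound on $x\mapsto d(x,A)$ to pass from continuity of finitely many $\phi_{x_{j_i}}$ to Hausdorff-metric continuity. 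The only point worth flagging is that you implicitly use the (standard) fact that the set of continuity points of any map between metric spaces is $G_\delta$; with that noted, the argument is complete, and it is essentially the argument one finds in Kuratowski.
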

	It is well known that each tds $(X,G)$ induces a tds on  $2^X$, known as the hyperspace system. More precisely, for $g\in G$, the action on $2^X$ is given	by $gA = \{gx : x\in A\}$ for each $A\in 2^X$.
	
	The following lemma will be used in our proof.
	\begin{lem}\label{lem-0}	Let $(X,G)$ be a minimal tds and $\pi_{{eq}}$ is the factor map to its maximal equicontinuous factor  $(X_{eq},G)$. Then,  the following statements hold: \begin{itemize}
			\item[(1)] If $C\in 2^{X_{eq}}$ and $C_*\in\overline{Orb(C,G)}\subset 2^{X_{eq}}$, then $\text{diam}(C_*)=\text{diam}(C)$.
			\item[(2)] Let $C_1,\cdots,C_M\in 2^{X}$ and $\{g_n\}_{n=1}^\infty$ be a sequence of $G$. If  $\lim_{n\to\infty}g_nC_m=C_m'$ for each $m\in\{1,2,\cdots,M\}$ and  $\lim_{n\to\infty}g_n(\cup_{m=1}^MC_m)=C'$, then
			$$C'=\cup_{m=1}^MC_m'.$$
		\end{itemize}
	\end{lem}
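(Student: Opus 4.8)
The plan is to treat the two parts separately, both via elementary properties of the Hausdorff metric on $2^X$ (resp. $2^{X_{eq}}$), together with the isometry normalization on the equicontinuous factor recorded in Subsection~\ref{sb-2}.

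For part~(1), I would first use that, after passing to an equivalent metric, each $g\in G$ acts on $X_{eq}$ as an isometry, i.e.\ $d_{eq}(gy,gy')=d_{eq}(y,y')$. This immediately forces each $g$ to act on $(2^{X_{eq}},d_H)$ as an isometry as well, so in particular $\mathrm{diam}(gC)=\mathrm{diam}(C)$ for every $C\in 2^{X_{eq}}$ and $g\in G$. Next I would note that the map $\mathrm{diam}\colon 2^{X_{eq}}\to[0,\infty)$ is continuous; in fact $|\mathrm{diam}(A)-\mathrm{diam}(B)|\le 2\,d_H(A,B)$, which one checks directly by moving the (attained) extremal pair of $A$ to within $d_H(A,B)$ of a pair in $B$ and vice versa. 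Combining these two observations, $\mathrm{diam}$ is constant, with value $\mathrm{diam}(C)$, on the orbit $Orb(C,G)$, and hence constant on its closure $\overline{Orb(C,G)}$ by continuity; in particular $\mathrm{diam}(C_*)=\mathrm{diam}(C)$.

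For part~(2), the key point is that finite unions are $1$-Lipschitz in the Hausdorff metric. I would establish
$$d_H\Big(\bigcup_{m=1}^M A_m,\ \bigcup_{m=1}^M B_m\Big)\le \max_{1\le m\le M} d_H(A_m,B_m)$$
for any $A_1,\dots,A_M,B_1,\dots,B_M\in 2^X$, which follows at once from the definition of $d_H$ by bounding each of the two one-sided distances: a point of $\bigcup_m A_m$ lies in some $A_j$, hence is within $d_H(A_j,B_j)$ of $B_j\subset\bigcup_m B_m$, and symmetrically. Since $g_n\big(\bigcup_{m=1}^M C_m\big)=\bigcup_{m=1}^M g_nC_m$ and $g_nC_m\to C_m'$ for every $m$, this estimate yields $g_n\big(\bigcup_{m=1}^M C_m\big)\to \bigcup_{m=1}^M C_m'$ in $(2^X,d_H)$. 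But by hypothesis $g_n\big(\bigcup_{m=1}^M C_m\big)\to C'$, and limits in the metric space $(2^X,d_H)$ are unique, so $C'=\bigcup_{m=1}^M C_m'$.

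I do not anticipate a genuine obstacle: both assertions are soft consequences of the metric geometry of the hyperspace plus the fact that the $G$-action on $X_{eq}$ is by isometries. The only points requiring (mild) care are the two Lipschitz-type inequalities — $|\mathrm{diam}(A)-\mathrm{diam}(B)|\le 2d_H(A,B)$ and the union estimate — and these are immediate from the triangle inequality and the definition of $d_H$.
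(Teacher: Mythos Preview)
Your proposal is correct and follows essentially the same approach as the paper: part~(1) is deduced from the isometry assumption on $(X_{eq},G)$ (you spell out the continuity of $\mathrm{diam}$ to pass to the orbit closure, which the paper leaves implicit), and part~(2) uses exactly the same Lipschitz inequality $d_H(\bigcup_m A_m,\bigcup_m B_m)\le \max_m d_H(A_m,B_m)$ that the paper records to conclude via continuity of the union map and uniqueness of limits.
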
 
	\begin{proof}(1) is straightforward  from the assumption that $(X_{eq}, G)$ is an isometry.
		
		For	(2), we define a map $\tau: (2^X)^M\to 2^X$ by $\tau(A_1,\cdots,A_M)=\cup_{m=1}^MA_m$. By the definition of Hausdorff metric, we have for any $(A_1,\cdots,A_M),(A_1',\cdots,A_M')\in (2^X)^M$, 
		\[d_H(\cup_{m=1}^MA_m,\cup_{m=1}^MA_m')\le \max_{1\le m\le M}d_H(A_m,A_m').\]
		which implies $\tau$ is continuous. Thus,
		\begin{align*}
			C'=	\lim_{n\to\infty}g_n  (\cup_{m=1}^MC_m)&=\lim_{n\to\infty}g_n\tau(C_1,\cdots,C_M)\\
			&=\tau(\lim_{n\to\infty}g_n(C_1),\cdots,\lim_{n\to\infty}g_n(C_M))\\
			&=\tau(C_1',\cdots,C_m')=\cup_{m=1}^NC_m'.
		\end{align*}
		The proof is complete.
	\end{proof}

	\subsection{Ellis semigroup}
	Given a tds $(X,G)$, the Ellis semigroup $E(X)(=E(X,G))$ associated to
	$(X, G)$ is defined as the closure of $\{x\mapsto  tx: t \in G \}\subset X^X$ in the product topology, where
	the semi-group operation is given by the composition. On $E(X)$, we may consider the
	tds given by $E(X)\ni s\mapsto  ts$ for each element $t\in G$.
	
	We now summarize some properties of the tds $(E(X), G)$, as detailed in \cite[pp. 52-53]{A1}, \cite[Theorems 5.4 and 6.20]{P} and \cite[Theorem 15.13]{Haar}.
	\begin{thm}\label{thm-A1}Suppose $H$ is a compact metric space and $(H, G)$ is minimal
		and equicontinuous. Then we have the following statements:
		\begin{enumerate}[(a)]
			\item $E(H)$ is a compact metrizable topological group, and the system $(E(H), G)$ is uniquely ergodic with respect to its Haar measure.
			\medskip
			\item $G$ as a group of homeomorphisms on $H$, can be seen as a dense subgroup of $E(H)$. Hence, $(E(H),G)$ is minimal.
				\medskip
			\item $(H, G)$ is a factor of $(E(H), G)$, with an open factor map $\pi_H$ is given by
			$$\pi_H: E(H)\to  H, t\mapsto  th, \text{ for some fixed $h\in H$}.$$
		\end{enumerate}
	\end{thm}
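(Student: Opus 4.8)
The plan is to realize the abstract Ellis semigroup $E(H)$ as a closed subgroup of the isometry group of $H$ and then read off everything from standard topological-group facts. First I would use equicontinuity to replace $d$ by the equivalent $G$-invariant metric $\rho(x,y)=\sup_{g\in G}d(gx,gy)$, so that every $g\in G$ acts as an isometry of $(H,\rho)$; equicontinuity is precisely what makes $\rho$ and $d$ uniformly equivalent, so the topology is unchanged. Then the family $\{L_g:h\mapsto gh\}_{g\in G}$ is uniformly equicontinuous, hence relatively compact in $C(H,H)$ by Arzel\`a--Ascoli, and on this set the product (pointwise) topology used to define $E(H)$ coincides with the topology of uniform convergence. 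Since a pointwise limit of isometries is an isometry, and an isometric self-map of a compact metric space is automatically surjective (a short pigeonhole argument on an orbit $x,f(x),f^2(x),\dots$), one gets $E(H)\subseteq \mathrm{Isom}(H)$, the group of isometric bijections of $H$; and $\mathrm{Isom}(H)$ under composition and the uniform topology is a compact metrizable topological group (joint continuity of composition follows from $1$-Lipschitzness, continuity of inversion from the usual isometry computation $d(s^{-1}x,t^{-1}x)=d(x,st^{-1}x)$). As $E(H)$ is by definition the closure of the subgroup $\{L_g:g\in G\}$ of $\mathrm{Isom}(H)$, and the closure of a subgroup of a topological group is a subgroup, $E(H)$ is a closed -- hence compact, metrizable -- subgroup of $\mathrm{Isom}(H)$. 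This yields the group part of (a) and the ``dense subgroup'' assertion of (b), the image of $G$ being dense by construction (if the $G$-action on $H$ is not faithful, one reads $G$ as its image $G/\ker$).

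For minimality in (b): $G$ acts on $E(H)$ by the left translations $L_t$, $t\in G$, so the orbit of the identity is $\{L_t:t\in G\}$, which is dense in $E(H)$; homogeneity of the compact group $E(H)$ then forces every orbit to be dense, so $(E(H),G)$ is minimal. For unique ergodicity in (a): if $\mu$ is a $G$-invariant Borel probability on $E(H)$, then $(L_t)_*\mu=\mu$ for every $t\in G$; given $s\in E(H)$, choose $t_n\in G$ with $L_{t_n}\to L_s$ uniformly, whence $(L_{t_n})_*\mu\to (L_s)_*\mu$ in the weak$^*$ topology and therefore $(L_s)_*\mu=\mu$. Thus $\mu$ is invariant under all left translations of the compact group $E(H)$, so $\mu$ is its Haar measure; this is exactly unique ergodicity, and the Haar measure is itself $G$-invariant (and, being the unique invariant measure, ergodic).

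For (c): fix $h_0\in H$ and set $\pi_H(s)=s(h_0)$. This is continuous (evaluation at $h_0$ on $C(H,H)$) and $G$-equivariant, since $\pi_H(L_t s)=(ts)(h_0)=t(s(h_0))=t\,\pi_H(s)$; its image contains $Orb(h_0,G)$, which is dense by minimality of $(H,G)$, and is compact, hence equals $H$, so $\pi_H$ is a factor map. For openness, observe that $\pi_H$ is constant on the left cosets of the closed subgroup $E(H)_{h_0}=\{s\in E(H):s(h_0)=h_0\}$ and induces a continuous bijection $E(H)/E(H)_{h_0}\to H$, which is a homeomorphism because $E(H)$ is compact and $H$ is Hausdorff; since the quotient map $E(H)\to E(H)/E(H)_{h_0}$ is open (as for any quotient of a topological group by a subgroup), $\pi_H$ is open.

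The only genuinely delicate point is the first paragraph: matching the abstract Ellis semigroup, defined through the product topology on $H^H$, with a closed subgroup of $\mathrm{Isom}(H)$ in the uniform topology -- i.e. the Arzel\`a--Ascoli/equicontinuity bookkeeping and the surjectivity of isometries of compact spaces. Once that identification is in place, parts (a), (b) and (c) are routine consequences of the uniqueness of Haar measure, homogeneity of compact groups, and the openness of quotient maps in topological groups.
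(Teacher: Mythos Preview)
The paper does not give its own proof of this theorem: it is stated as a summary of known facts, with citations to Auslander \cite[pp.~52--53]{A1}, Walters \cite[Theorems~5.4 and 6.20]{P}, and Hewitt--Ross \cite[Theorem~15.13]{Haar}. Your sketch is correct and is essentially the classical argument found in those references---pass to a $G$-invariant metric, use Arzel\`a--Ascoli to identify $E(H)$ with the closure of $G$ in the compact metrizable group $\mathrm{Isom}(H)$, and then read off (a)--(c) from standard compact-group facts (uniqueness of Haar measure for unique ergodicity, homogeneity for minimality, and openness of the quotient by a closed stabilizer subgroup for the openness of $\pi_H$). The one caveat you already flagged is the only real subtlety: the statement ``$G$ can be seen as a dense subgroup of $E(H)$'' is literally true only after quotienting by the kernel of the action.
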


	\section{Interior saturation property}\label{sec-isp}
	The aim of this section is to establish the relation between a minimal tds satisfying the conditions in Theorem \ref{main1'} and its hyperspace system (see Proposition \ref{prop-2}). However, for our purposes, we only need a corollary of this result, namely Lemma \ref{lem-2}, which is applied in Section \ref{IT-ab} and Section \ref{s-mean-s-set} to prove our main results.

	\subsection{Notation}\label{subsection-notain}	In this subsection, we introduce some notations, and prove some basic properties about them.

	Let $(X,G)$ be a minimal tds, and let $\pi_{eq}:(X,G)\to(X_{eq},G)$ be the factor map to the maximal equicontinuous factor of $(X,G)$. Recall that $(X_{eq}, G)$ is an isometry and that the unique invariant measure on $X_{eq}$ is denoted by $\nu_{eq}$.

	Let $$\mathcal{X}=\overline{\{\pi_{eq}^{-1}(y): y\in X_{eq}\}}\subset 2^X.$$ 
Then 	$\mathcal{X}$ is a compact invariant subset of $2^X$, and   thus, $(\X,G)$  is a subsystem of $(2^X,G)$. Note that  for every $E\in\mathcal{X}$, there exists $y\in X_{eq}$ such that $E\subset \pi_{eq}^{-1}(y)$. Therefore, we	may define the map $$\pi_\mathcal{X}: \mathcal{X}\to X_{eq},~E\mapsto\pi_{eq}(E).$$

	We have the following description for the elements in $\X$.
	\begin{lem}\label{lem-1.2}
		Let $(X,G)$ be a minimal tds. If $\pi_{eq}$ is almost $N$ to one  for some $N\in\N\cup\{\infty\}$, then  $|E|\ge N$ for all $E\in\X$.
	\end{lem}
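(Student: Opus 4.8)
The plan is to argue by contradiction, using the upper semi-continuity of the fiber map $\pi_{eq}^{-1}$ together with the minimality of $(X_{eq},G)$. Suppose some $E \in \X$ has $|E| < N$; in particular $E$ is a finite set, say $|E| = m < N$. By definition of $\X$, there is a sequence $y_i \in X_{eq}$ with $\pi_{eq}^{-1}(y_i) \to E$ in the Hausdorff metric $d_H$ on $2^X$. Since $\pi_{eq}$ is almost $N$ to one, the set $Y_N = \{y \in X_{eq} : |\pi_{eq}^{-1}(y)| = N\}$ is residual, hence dense; so I may perturb and assume each $y_i$ lies in $Y_N$, i.e. each fiber $\pi_{eq}^{-1}(y_i)$ has exactly $N$ points (or is infinite, if $N = \infty$).

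The key step is the following separation/counting argument. Fix $\varepsilon > 0$ smaller than half the minimal pairwise distance among the $m$ points of $E$, so that the $\varepsilon$-balls $B^X_\varepsilon(e)$, $e \in E$, are pairwise disjoint. For $i$ large we have $d_H(\pi_{eq}^{-1}(y_i), E) < \varepsilon$, which forces $\pi_{eq}^{-1}(y_i) \subset \bigcup_{e \in E} B^X_\varepsilon(e)$ and also $\pi_{eq}^{-1}(y_i) \cap B^X_\varepsilon(e) \neq \emptyset$ for each $e \in E$. Now I claim each ball $B^X_\varepsilon(e)$ can contain only boundedly many points of $\pi_{eq}^{-1}(y_i)$ — in fact the right way to see the contradiction is to observe that for any $y \in Y_N$ close enough to $y_i$ (using minimality of $X_{eq}$ and equicontinuity to move fibers around), $\pi_{eq}^{-1}(y)$ again decomposes into $m$ clusters of small diameter, so a short induction / diagonal argument produces fibers splitting into fewer than $N$ pieces that nonetheless each have $\ge N$ points — but this is where I should instead run the argument more carefully: the honest route is to use that the fiber map is upper semi-continuous, so $\limsup_i \pi_{eq}^{-1}(y_i) \subset E$ in the sense that every cluster point of points $x_i \in \pi_{eq}^{-1}(y_i)$ lies in $E$, while Hausdorff convergence gives the reverse inclusion of closures; combined with $|E| = m$, for large $i$ the set $\pi_{eq}^{-1}(y_i)$ lies in the $\varepsilon$-neighborhood of the $m$-point set $E$, and one partitions $\pi_{eq}^{-1}(y_i)$ accordingly.

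To close the contradiction I would pass to a further subsequence and use a pigeonhole argument: since $|\pi_{eq}^{-1}(y_i)| \ge N > m$ for all $i$, some ball $B^X_\varepsilon(e_0)$ contains at least two points $u_i \ne v_i$ of $\pi_{eq}^{-1}(y_i)$, with $d(u_i,v_i) \to 0$ impossible to guarantee directly — so the cleanest finish is to choose $\varepsilon_i \to 0$, extract by a diagonal argument a subsequence along which $\pi_{eq}^{-1}(y_i)$ Hausdorff-converges to $E$ and simultaneously each point of $E$ is approximated; then for each $e \in E$ pick $x_i^{(e)} \in \pi_{eq}^{-1}(y_i)$ with $x_i^{(e)} \to e$, and observe these $m$ choices exhaust $\pi_{eq}^{-1}(y_i)$ for large $i$, contradicting $|\pi_{eq}^{-1}(y_i)| \ge N > m$. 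The main obstacle is making precise the step "the fibers $\pi_{eq}^{-1}(y_i)$ are eventually contained in the union of small balls about the points of $E$, each ball meeting the fiber" and then ruling out that a single ball absorbs more than one fiber point in the limit — this is exactly upper semi-continuity of $\pi_{eq}^{-1}$ (Lemma \ref{usc}(1)) combined with $d_H$-convergence, and the bookkeeping of which points go to which limit is the only delicate part; the case $N = \infty$ is handled identically since any finite $E$ already has $|E| < \infty = N$.
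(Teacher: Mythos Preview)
Your argument has a genuine gap at the final step. You want to conclude from $d_H(\pi_{eq}^{-1}(y_i),E)\to 0$ with $|E|=m$ that ``the $m$ choices $x_i^{(e)}$ exhaust $\pi_{eq}^{-1}(y_i)$ for large $i$'', but Hausdorff convergence to an $m$-point set in no way bounds the cardinality of the approximating sets: points may coalesce. Concretely, in $X=[0,1]$ the sets $\{0,1/i,2/i,\ldots,(N-1)/i\}$ have $N$ points each yet Hausdorff-converge to $\{0\}$. So even after your (itself delicate) perturbation so that $y_i\in Y_N$, having $\pi_{eq}^{-1}(y_i)$ contained in the union of $m$ disjoint $\varepsilon$-balls and meeting each of them says nothing against $|\pi_{eq}^{-1}(y_i)|\ge N$. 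The pigeonhole observation that some ball contains at least two fiber points is correct but harmless --- there is simply no contradiction here.

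The paper's proof reaches the contradiction in the opposite direction: it does not try to bound $|\pi_{eq}^{-1}(y_i)|$ along the particular sequence, but instead shows that the existence of $E_*\in\X$ with $|E_*|=M<N$ forces $\{y:|\pi_{eq}^{-1}(y)|\le M\}$ to be \emph{residual}, contradicting almost $N$ to one. The mechanism is a covering-number argument: introduce $s(A,\gamma)=\min\{m:A\text{ is covered by }m\text{ $\gamma$-balls}\}$, note that some $y_*$ has $s(\pi_{eq}^{-1}(y_*),\xi)\le M$, and then use minimality of $(X_{eq},G)$ together with uniform continuity of a finite set $F\subset G$ to propagate this to a $\delta$-dense set of $y$'s with $s(\pi_{eq}^{-1}(y),\epsilon)\le M$. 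Upper semi-continuity of $\pi_{eq}^{-1}$ makes $Y_\epsilon=\{y:s(\pi_{eq}^{-1}(y),\epsilon)\le M\}$ open, hence dense open; intersecting over $\epsilon\searrow 0$ gives the residual set. The ingredient you are missing is this use of minimality (and the group action) to spread the ``small covering number'' property across $X_{eq}$ --- no single convergent sequence of fibers will produce the contradiction on its own.
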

		\begin{proof}
			Suppose,  for a contradiction,  that there exists $E_*\in \mathcal{X}$ with $|E_*|=M<N$.  Our aim is to show that $\{y\in X_{eq}:|\pi_{eq}^{-1}(y)|\le M\}$ is a residual subset in $X_{eq}$, contradicting the assumption that $\pi_{eq}$ is almost $N$ to one. This completes the proof of Lemma \ref{lem-1.2}.
			
			For $\gamma>0$ and $A\subset X$, denote 
			$$s(A,\gamma)=\min\{m\in\N: \text{there exist }x_1,\cdots,x_m\in X\text{ such that }A\subset \bigcup_{i=1}^mB^X_\gamma(x_i)\}.$$ 
			Fix $\epsilon>0$ and $\delta>0$. Since $(X_{eq}, G)$ is a minimal tds, there exists a finite subset $F$ of $G$ such that \begin{align}\label{eq-00}\{gy: g\in F\}\text{ is }\delta\text{-dense in }X_{eq}\text{ for all }y\in X_{eq}.\end{align} 
			There is $\xi>0$ such that for any
			$x, x'\in X$, $d(x,x')<\xi\text{ implies }d(gx,gx')<\epsilon\text{ for all }g \in F.$
			
			Since $|E_*|=M$, there exists $y_*\in X_{eq}$ such that $ s(\pi_{eq}^{-1}(y_*),\xi)\le M$. Then, $  s(\pi_{eq}^{-1}(gy_*),\epsilon)\le M$ for all $g\in F.$ Thus, by \eqref{eq-00},  $Y_\epsilon=\{y\in X_{eq}:s(\pi^{-1}_{eq}(y),\epsilon)\le M\}$ is $\delta$-dense in $X_{eq}$. Noting that this holds for all $\delta>0$, one has 
			$Y_\epsilon$ is dense in $X_{eq}$. Since $\pi_{eq}^{-1}$ is upper semi-continuous, $Y_\epsilon$ is  an open subset of $X_{eq}$. Therefore, by the arbitrariness of $\epsilon>0$, $Y_*=\cap_{\epsilon>0}Y_\epsilon$ is a dense $G_\delta$ subset of $X_{eq}$. It is clear that 
			$$|\pi_{eq}^{-1}(y)|\le M<N\text{ for every }y\in Y_*.$$
			This implies $\pi_{eq}$ is  almost $M$ to one, which is  a contradiction.  Hence $|E|\ge N$ for all $E\in\mathcal{X}$.
		\end{proof}

	Now, we construct a subsystem $(\mathcal{X}_{eq}^{\text{meas}},G)$  of $(\X,G)$ as follows. Put
	$$\mathcal{X}_{eq}^{\text{meas}}=\{E\in\mathcal{X}:\nu_{eq}(\pi_\mathcal{X}(B^\mathcal{X}_{\epsilon}(E)))>0, \forall \epsilon>0\}.$$
	It is clear that $\mathcal{X}_{eq}^{\text{meas}}$ is a closed and $G$-invariant subset of $\X$.  The following lemma shows that $\mathcal{X}_{eq}^{\text{meas}}$ contains full measure fibers of $X_{eq}$. In particular, it is nonempty, and hence $(\mathcal{X}_{eq}^{\text{meas}},G)$ is a subsystem of $(\X,G)$. 
	\begin{lem}\label{lem-1}Let $(X,G)$ be a minimal tds and denote
		$$Y=\{y\in X_{eq}: \pi^{-1}_{eq}(y)\in\mathcal{X}_{eq}^{\text{meas}}\}.$$ Then, $Y$ is a dense $G_\delta$ subset of $X_{eq}$ with $\nu_{eq}(Y)=1$.
	\end{lem}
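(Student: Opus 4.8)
The plan is to show that $Y$ is a dense $G_\delta$ set and then that $\nu_{eq}(Y)=1$; these are somewhat independent facts.

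\textbf{Step 1: $Y$ is a $G_\delta$ set.} The key is to reformulate membership of $\pi_{eq}^{-1}(y)$ in $\mathcal{X}_{eq}^{\mathrm{meas}}$ in terms of the upper semicontinuous map $\pi_{eq}^{-1}: X_{eq}\to 2^X$. For $\epsilon>0$, define $Y_\epsilon = \{y\in X_{eq}: \nu_{eq}(\pi_{\mathcal{X}}(B^{\mathcal{X}}_\epsilon(\pi_{eq}^{-1}(y))))>0\}$; then $Y=\bigcap_{n\in\N} Y_{1/n}$. So it suffices to show each $Y_\epsilon$ contains a dense open set, or at least that $Y$ is $G_\delta$. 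I would argue that $Y_\epsilon$ is open: if $y\in Y_\epsilon$, then by upper semicontinuity of $\pi_{eq}^{-1}$, for $y'$ close to $y$ we have $\pi_{eq}^{-1}(y')\subset B^X_{\epsilon/2}(\pi_{eq}^{-1}(y))$, hence $B^{\mathcal{X}}_{\epsilon/2}(\pi_{eq}^{-1}(y'))$ contains $\pi_{eq}^{-1}(y)$ in a suitable sense and $\pi_{\mathcal{X}}(B^{\mathcal{X}}_\epsilon(\pi_{eq}^{-1}(y')))\supset \pi_{\mathcal{X}}(B^{\mathcal{X}}_{\epsilon/2}(\pi_{eq}^{-1}(y)))$ up to the measure bound; some care with the precise radii and the distinction between $B^X$ and $B^{\mathcal{X}}$ is needed here. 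Granting openness of each $Y_\epsilon$, $Y$ is $G_\delta$; density of $Y$ will follow once we show $\nu_{eq}(Y)=1$, since a full-measure $G_\delta$ set in a space with full-support measure is dense (and $\nu_{eq}$ has full support because $(X_{eq},G)$ is minimal).

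\textbf{Step 2: $\nu_{eq}(Y)=1$.} This is the heart of the matter. The natural approach is a measure-theoretic/Fubini-type argument. Consider the map $\Phi: X_{eq}\to \mathcal{X}$, $y\mapsto \pi_{eq}^{-1}(y)$; it is Borel (indeed upper semicontinuous) and $G$-equivariant, and $\pi_{\mathcal{X}}\circ\Phi=\mathrm{id}$. Push $\nu_{eq}$ forward to get a measure $\lambda=\Phi_*\nu_{eq}$ on $\mathcal{X}$, which is $G$-invariant and satisfies $(\pi_{\mathcal{X}})_*\lambda=\nu_{eq}$. The claim $\nu_{eq}(Y)=1$ is equivalent to: for $\lambda$-a.e. $E$, every ball $B^{\mathcal{X}}_\epsilon(E)$ has $\nu_{eq}(\pi_{\mathcal{X}}(B^{\mathcal{X}}_\epsilon(E)))>0$. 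I would prove this by showing that $\lambda$-a.e. $E$ lies in the \emph{support} of $\lambda$: indeed, if $E\in\mathrm{supp}(\lambda)$ then $\lambda(B^{\mathcal{X}}_\epsilon(E))>0$ for all $\epsilon>0$, and since $\pi_{\mathcal{X}}$ pushes $\lambda$ to $\nu_{eq}$ we get $\nu_{eq}(\pi_{\mathcal{X}}(B^{\mathcal{X}}_\epsilon(E)))\ge \lambda(B^{\mathcal{X}}_\epsilon(E))>0$, so $E\in\mathcal{X}_{eq}^{\mathrm{meas}}$. But $\lambda(\mathrm{supp}(\lambda))=1$ always holds for a Borel probability measure on a metric space, so $\lambda\{E: E\in\mathcal{X}_{eq}^{\mathrm{meas}}\}=1$, i.e. $\nu_{eq}\{y: \pi_{eq}^{-1}(y)\in\mathcal{X}_{eq}^{\mathrm{meas}}\}=1$, which is exactly $\nu_{eq}(Y)=1$.

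\textbf{Step 3: Density.} As noted, since $\nu_{eq}$ is the unique invariant measure of the minimal system $(X_{eq},G)$, it has full support; a $G_\delta$ set of full measure with respect to a fully supported measure is dense, so $Y$ is dense.

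\textbf{Main obstacle.} The technical friction is entirely in Step 1 and in the measurability/Borel regularity needed to make Step 2 rigorous: one must check that $\Phi=\pi_{eq}^{-1}$ is genuinely Borel measurable (this follows from upper semicontinuity, Lemma \ref{usc}(1)) so that $\lambda=\Phi_*\nu_{eq}$ makes sense, and one must be careful that $\pi_{\mathcal{X}}(B^{\mathcal{X}}_\epsilon(E))$ is measurable (it is, being the continuous image of an open set in a compact metric space, hence analytic, hence universally measurable; alternatively one can replace it by a slightly smaller Borel set). The openness of $Y_\epsilon$ requires matching the Hausdorff-metric ball $B^{\mathcal{X}}_\epsilon$ against the upper-semicontinuity estimate, which only controls $\pi_{eq}^{-1}(y')$ from one side, so one genuinely uses upper (not two-sided) semicontinuity together with the monotonicity $\nu_{eq}(\pi_{\mathcal{X}}(\cdot))$ under inclusion. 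These are routine once set up correctly, but they are where all the care goes.
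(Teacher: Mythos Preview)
Your Step~2 is correct and is in fact cleaner than the paper's argument: the paper obtains $\nu_{eq}(Y)=1$ via Lusin's theorem (finding large sets on which $\pi_{eq}^{-1}$ is continuous, whose support then lies in $Y$), whereas your support-of-pushforward argument is more direct. Step~3 is also fine; the paper instead proves density first, by observing that every continuity point of $\pi_{eq}^{-1}$ lies in $Y$, but your route works equally well.

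The genuine gap is in Step~1. Upper semicontinuity of $\pi_{eq}^{-1}$ only gives the \emph{one-sided} inclusion $\pi_{eq}^{-1}(y')\subset B^X_{\delta}(\pi_{eq}^{-1}(y))$ for $y'$ near $y$; it says nothing about $d_H(\pi_{eq}^{-1}(y'),\pi_{eq}^{-1}(y))$. At a discontinuity point $y$, nearby fibers $\pi_{eq}^{-1}(y')$ can be much smaller than $\pi_{eq}^{-1}(y)$ and hence far from it in $d_H$, so there is no reason for $B^{\mathcal X}_\epsilon(\pi_{eq}^{-1}(y'))$ to contain, or even meet, a fixed Hausdorff ball around $\pi_{eq}^{-1}(y)$. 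The ``suitable sense'' you invoke does not exist, and your $Y_\epsilon$ need not be open.

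The paper avoids this by choosing a different family of approximants that only uses the one-sided control u.s.c.\ actually provides: it sets
\[
Y_\epsilon=\{y\in X_{eq}:\pi_{eq}^{-1}(y)\subset B^X_\epsilon(E)\ \text{for some}\ E\in\mathcal X_{eq}^{\mathrm{meas}}\},
\]
which is open because $\{y:\pi_{eq}^{-1}(y)\subset U\}$ is open for every open $U\subset X$. One then checks $Y=\bigcap_n Y_{1/n}$; the nontrivial inclusion uses compactness of $\mathcal X_{eq}^{\mathrm{meas}}$ to pass to a limit $E_*\in\mathcal X_{eq}^{\mathrm{meas}}$ with $\pi_{eq}^{-1}(y_*)\subset E_*$, and then the fact that every $E_*\in\mathcal X$ is contained in a single fiber forces $E_*=\pi_{eq}^{-1}(y_*)$. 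If you want to keep your formulation, an alternative fix is to note that for an u.s.c.\ map $\Phi:X_{eq}\to 2^X$ the preimage of any closed subset of $2^X$ is $G_\delta$ (since the Vietoris subbasic sets $\{E:E\subset V\}$ pull back to open sets and $\{E:E\cap V\neq\emptyset\}$ pull back to $F_\sigma$ sets), and then apply this with the closed set $\mathcal X_{eq}^{\mathrm{meas}}$.
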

	\begin{proof}
First, we show that $Y$ is a dense $G_\delta$ subset of $X_{eq}$. It is easy to see that every continuity point of $\pi_{eq}^{-1}$ belongs to $Y$. By Lemma~\ref{usc}, the set of continuity points of $\pi_{eq}^{-1}$ is dense in $X_{eq}$. Hence, $Y$ is a dense subset of $X_{eq}$.
 Now we prove $Y$ is a $G_\delta$ subset.
		For $\epsilon>0$, denote  $$Y_\epsilon=\{y\in X_{eq}: \pi_{eq}^{-1}(y)\subset B^X_\epsilon(E)\text{ for some }E\in  \mathcal{X}_{eq}^{\text{meas}}\}.$$
	By the upper semi-continuity of $\pi_{eq}$, one has  $Y_\epsilon$ is open for all $\epsilon>0$. Thus, it suffices to prove that  $Y=\cap_{n\in\N}Y_{1/n}$. As  $Y\subset\cap_{n\in\N}Y_{1/n}$ is trivial, we only need to show that $\cap_{n\in\N}Y_{1/n}\subset Y$. 
	
	Let $y_*\in \cap_{n\in\N}Y_{1/n}$. There exists $E_n\in \mathcal{X}_{eq}^{\text{meas}}$ such that $\pi_{eq}^{-1}(y_*)\subset B_{1/n}^X(E_n)$ for all $n\in\N$.  Passing to a subsequence if necessary, we may assume that $\lim_{n\to\infty}E_n=E_*\in \mathcal{X}_{eq}^{\text{meas}}$. Then $\pi_{eq}^{-1}(y_*) \subset E_*$, which implies that $\pi_{eq}^{-1}(y_*) = E_* \in \mathcal{X}_{eq}^{\text{meas}}$, and thus $y_* \in Y$. By the arbitrariness of $y_* \in \cap_{n \in \mathbb{N}} Y_{1/n}$, it follows that $\cap_{n \in \mathbb{N}} Y_{1/n} \subset Y$. Therefore, $Y$ is a $G_\delta$ subset of $X$.

		On the other hand, for $\epsilon>0$, by Lusin's theorem, we may find a measurable subset $Z_\epsilon$ of $X_{eq}$ with $\nu_{eq}(Z_\epsilon)>1-\epsilon$ such that $\pi_{eq}^{-1}|_{Z_\epsilon}$ is continuous, and hence the support of $\nu_{eq}|_{Z_\epsilon}$ belongs to $Y$. We finish the proof of Lemma \ref{lem-1}, as $\nu_{eq}( \cup_{n=1}^\infty Z_{1/n})=1$.
	\end{proof}
	We now enumerate additional properties of 
	$\mathcal{X}_{eq}^{\text{meas}}$
	to facilitate its use in our subsequent work. \begin{lem}\label{lem-per-xmeas}
		Let $(X,G)$ be a minimal tds. Then the following statements hold:
		\begin{itemize}
			\item[(1)]If $\pi_{eq}$ is open, then $\mathcal{X}_{eq}^{\text{meas}}=\{\pi_{eq}^{-1}(y): y\in X_{eq} \}$.
			\item[(2)] If $\pi_{eq}$ is regular one to one, then $|E|=1$ for every $E\in \mathcal{X}_{eq}^{\text{meas}}$.
			\item[(3)] If $\pi_{eq}$ is regular $K$ to one, then $|E|\le K$ for every $E\in \mathcal{X}_{eq}^{\text{meas}}$.
		\end{itemize}
	\end{lem}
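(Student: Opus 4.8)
# Proof proposal for Lemma 1.14 (properties of $\mathcal{X}_{eq}^{\text{meas}}$)

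The plan is to read off all three items directly from the definition of $\mathcal{X}_{eq}^{\text{meas}}$, using that $\nu_{eq}$ has full support (since $(X_{eq},G)$ is minimal and uniquely ergodic). For (1), when $\pi_{eq}$ is open Lemma~\ref{usc}(2) tells us that the map $\pi_{eq}^{-1}:X_{eq}\to 2^X$, $y\mapsto\pi_{eq}^{-1}(y)$, is continuous; hence its image $\{\pi_{eq}^{-1}(y):y\in X_{eq}\}$ is compact, in particular closed in $2^X$, so $\mathcal{X}=\{\pi_{eq}^{-1}(y):y\in X_{eq}\}$ and only the inclusion $\{\pi_{eq}^{-1}(y):y\in X_{eq}\}\subset\mathcal{X}_{eq}^{\text{meas}}$ remains. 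Fixing $E=\pi_{eq}^{-1}(y_0)$ and $\epsilon>0$, continuity of $\pi_{eq}^{-1}$ at $y_0$ produces $\delta>0$ with $\pi_{eq}^{-1}(y)\in B^{\mathcal{X}}_\epsilon(E)$ whenever $d_{eq}(y,y_0)<\delta$; applying $\pi_\mathcal{X}$ gives $\{y:d_{eq}(y,y_0)<\delta\}\subset\pi_\mathcal{X}(B^{\mathcal{X}}_\epsilon(E))$, and the left side has positive $\nu_{eq}$-measure by full support. As $\epsilon>0$ is arbitrary, $E\in\mathcal{X}_{eq}^{\text{meas}}$.

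I would prove item (3) by contradiction; item (2) then follows, being the case $K=1$ together with the fact that every element of $2^X$ is nonempty (so $|E|\le1$ forces $|E|=1$). Suppose $E\in\mathcal{X}_{eq}^{\text{meas}}$ has $K+1$ distinct points $x_0,\dots,x_K$, and choose $\rho>0$ with the balls $B^X_\rho(x_0),\dots,B^X_\rho(x_K)$ pairwise disjoint. If $E'\in\mathcal{X}$ satisfies $d_H(E,E')<\rho$, then each $x_i$ lies within $\rho$ of some point of $E'$, so $E'$ meets each of the $K+1$ disjoint balls $B^X_\rho(x_i)$; hence $|E'|\ge K+1$, and therefore $|\pi_{eq}^{-1}(\pi_\mathcal{X}(E'))|\ge K+1$ since $E'\subset\pi_{eq}^{-1}(\pi_\mathcal{X}(E'))$. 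Thus $\pi_\mathcal{X}(B^{\mathcal{X}}_\rho(E))\subset N:=\{y\in X_{eq}:|\pi_{eq}^{-1}(y)|\ge K+1\}$. The set $N$ is Borel: it is the union over $n\in\N$ of the (compact) projections to $X_{eq}$ of the compact sets $\{(y,x_0,\dots,x_K):\pi_{eq}(x_i)=y\ \forall i,\ d(x_i,x_j)\ge 1/n\ \forall i\neq j\}$, and the hypothesis that $\pi_{eq}$ is regular $K$ to one forces $\nu_{eq}(N)=0$. Hence $\nu_{eq}(\pi_\mathcal{X}(B^{\mathcal{X}}_\rho(E)))=0$, contradicting $E\in\mathcal{X}_{eq}^{\text{meas}}$.

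The steps above essentially finish the proof. The one place needing a word of care is that $\pi_\mathcal{X}(B^{\mathcal{X}}_\epsilon(E))$, being the continuous image of an open set, is \emph{a priori} only analytic; but it is therefore $\nu_{eq}$-measurable, and in (3) one anyway only uses that it is contained in the Borel null set $N$, so no genuine difficulty arises. I expect the only mild obstacle to be setting up the Hausdorff-metric bookkeeping in (3) cleanly, so that a lower bound on $|E|$ propagates to all nearby elements of $\mathcal{X}$ and then, via $\pi_\mathcal{X}$, to a set of base points of positive $\nu_{eq}$-measure, contradicting regularity.
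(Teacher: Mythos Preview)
Your proposal is correct and follows essentially the same route as the paper: for (1) you invoke Lemma~\ref{usc}(2) to get continuity of $\pi_{eq}^{-1}$ and deduce $\mathcal{X}=\{\pi_{eq}^{-1}(y):y\in X_{eq}\}=\mathcal{X}_{eq}^{\text{meas}}$, and for (3) you argue by contradiction that an $E\in\mathcal{X}_{eq}^{\text{meas}}$ with $|E|\ge K+1$ forces every nearby $E'$ (in Hausdorff distance) to have $|E'|\ge K+1$, so that $\pi_\mathcal{X}(B^\mathcal{X}_\rho(E))$ lies in a $\nu_{eq}$-null set, contradicting membership in $\mathcal{X}_{eq}^{\text{meas}}$; (2) is then the case $K=1$. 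Your version is in fact slightly more careful than the paper's, which leaves the measurability of $\pi_\mathcal{X}(B^\mathcal{X}_\epsilon(E))$ implicit, whereas you note that it is analytic and hence $\nu_{eq}$-measurable.
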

	\begin{proof}It is easy to see that (1) is from Lemma \ref{usc} (2), and  (2) is from (3). Now we prove (3), suppose for a contradiction that there exists $E\in \mathcal{X}_{eq}^{\text{meas}}$ with $|E|\ge K+1$. By the definition of Hausdorff metric, there exists $\epsilon>0$ such that for any $E'\in B^\mathcal{X}_{\epsilon}(E)$, $|E'|\ge K+1$. Thus, for any $y\in\pi_\mathcal{X}(B^\mathcal{X}_{\epsilon}(E))$, $|\pi^{-1}(y)|\ge K+1$. However, as $E\in \mathcal{X}_{eq}^{\text{meas}}$, it follows that $\nu_{eq}(\pi_\mathcal{X}(B^\mathcal{X}_{\epsilon}(E)))>0$, which contradicts with the assumption that $\pi_{eq}$ is regular $K$ to one.
	\end{proof}
	
		\subsection{Sets satisfying interior saturation property} 
		We say that a set $E\in 2^X$ has  {\bf interior saturation property}  if 
		\begin{align}\label{defn-saturation}\pi_{eq}^{-1}(\text{int}(\pi_{eq}(E)))\subset E.
		\end{align}
		Denote by $\mathcal{E}_{isp}(X,G)$ the set of elements in $2^X$ having interior saturation property. 
		
	For convenience, we say that a minimal tds $(X,G)$ has {\bf  property $(*)$ } if it meets the conditions in Theorem \ref{main1'}. That is, one of the followings holds: 
	\begin{itemize}
		\item[(1)] $(X,G)$ is incontractible, i.e., the set of minimal points of $(X^n,G)$ is dense in $X^n$ for every $n\in\N$;
		\item[(2)] $(X, G)$ is local Bronstein, and admits an  invariant probability measure.
	\end{itemize}
	
	We now present the main result of this section, which asserts that a certain class of sets has interior saturation property. 
	\begin{prop}\label{prop-2}Let $(X,G)$ be a minimal tds with property $(*)$, and let $U \subset X$ be a nonempty open subset. Denote by  $M_{U}$
		the minimal center of $( \overline{Orb(\overline{U},G)},G)$. 
		Then, $$M_U\subset \mathcal{E}_{isp}(X,G).$$
	\end{prop}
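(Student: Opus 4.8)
The plan is to show that every element of $M_U$ satisfies the inclusion \eqref{defn-saturation}. Fix $E\in M_U$ and set $y_0\in\text{int}(\pi_{eq}(E))$; we must show $\pi_{eq}^{-1}(y_0)\subset E$. First I would unwind the definitions: since $E\in M_U$ lies in the closure of the minimal points of $(\overline{Orb(\overline U,G)},G)$, and the latter system is a factor-image closure inside $2^X$, we may approximate $E$ in the Hausdorff metric by minimal points, and each minimal point is itself a Hausdorff-limit of translates $g\overline U$. The goal is to transfer the ``interior'' information $y_0\in\text{int}(\pi_{eq}(E))$ into a genuine saturation statement, and for this the key tool is the local Bronstein condition (guaranteed by property $(*)$ via Remark~\ref{rem-1}) together with Theorem~\ref{rp}, which promotes pairwise regional proximality to $K$-fold regional proximality.

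The heart of the argument, as I see it, is the following: take an arbitrary point $x\in\pi_{eq}^{-1}(y_0)$ and an arbitrary point $x_E\in E$. Since $\pi_{eq}(x)=y_0\in\text{int}(\pi_{eq}(E))$ and $\pi_{eq}(x_E)\in\pi_{eq}(E)$, we have $\pi_{eq}(x)$ and $\pi_{eq}(x_E)$ both in $\pi_{eq}(E)$, hence close to each other in $X_{eq}$ if $E$ has small $\pi_{eq}$-diameter — but in general $E$ sits in a single fiber, so actually $\pi_{eq}(x)=\pi_{eq}(x_E)$ is forced once $E$ is a fiber; the subtlety is that a generic $E\in M_U$ need not be a fiber. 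So instead I would argue that $(x,x_E)\in RP_2(X,G)$: this should follow because $E$, being in $\overline{Orb(\overline U,G)}$ and a limit of translates of $\overline U$, is ``saturated under the regionally proximal relation'' in the appropriate sense, or more directly because $\pi_{eq}(x)=\pi_{eq}(x_E)$ whenever $E$ is contained in a fiber and $x$ lies over $\text{int}(\pi_{eq}(E))$ — and by the definition of $\mathcal X$ every $E\in M_U\subset\mathcal X$ is contained in a fiber of $\pi_{eq}$. Once $\pi_{eq}^{-1}(y_0)\supset E$ fails, I would pick finitely many points $x_1,\dots,x_m\in E$ that are $\gamma$-dense in $E$ plus the missing point $x$; since all of $x,x_1,\dots,x_m$ lie in one fiber, all pairs $(x,x_i)$ are in $RP_2(X,G)$, so by Theorem~\ref{rp} the tuple $(x,x_1,\dots,x_m)\in RP_{m+1}(X,G)$. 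Now apply Lemma~\ref{lem-rp} with the open set $U$: there is $h\in G$ and open neighborhoods $V_0,V_1,\dots,V_m$ of $x,x_1,\dots,x_m$ with $hV_i\subset U$ for all $i$. Then the translate $h$ applied to a set near $E$ lands near $\overline U$ in a way that forces, after taking the appropriate limit of translates using minimality of $E$ inside its orbit closure and Lemma~\ref{lem-0}(2), a set in $M_U$ that contains a point arbitrarily close to $x$; by minimality of $(\overline{Orb(E,G)},G)$ and a standard recurrence/syndeticity argument (Lemma~\ref{syndetic}), one concludes $x\in E$.

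More precisely, the clean way to organize the limiting step: since $E$ is a minimal point of $(\overline{Orb(\overline U,G)},G)$ or a limit of such, and $\overline U\in\overline{Orb(\overline U,G)}$ trivially, minimality gives a net $g_n$ with $g_n\overline U\to E$; conversely there is a net $h_n$ with $h_nE\to E'$ for any prescribed $E'\in\overline{Orb(E,G)}$, in particular we can steer the orbit of $E$ to capture the configuration dictated by $h,V_0,\dots,V_m$ from Lemma~\ref{lem-rp}. The point $x$ has a translate landing in $U$, $\overline U$ has a translate Hausdorff-close to $E$, and chaining these with Lemma~\ref{lem-0}(2) (union commutes with Hausdorff limits) shows that the Hausdorff limit of the relevant translates contains both (a dense subset of) $E$ and the point $x$; since that limit lies in $M_U$ and has the same $\pi_{eq}$-image structure, and since $M_U$ is contained in the single-fiber set $\mathcal X$, minimality forces this limit to equal $E$ itself, whence $x\in E$. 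As $x\in\pi_{eq}^{-1}(y_0)$ was arbitrary, $\pi_{eq}^{-1}(y_0)\subset E$, i.e.\ $E\in\mathcal E_{isp}(X,G)$.

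\textbf{Main obstacle.} The delicate point is the limiting/recurrence step that actually produces the point $x$ inside $E$: one must carefully combine (i) the regionally-proximal data from Lemma~\ref{lem-rp}, which only gives \emph{one} group element $h$ sending small neighborhoods into $U$, with (ii) a minimality argument in the hyperspace $(\overline{Orb(E,G)},G)$, making sure the Hausdorff limits behave well (Lemma~\ref{lem-0}) and that no ``mass'' of $E$ is lost in the limit. This is exactly where property $(*)$ enters twice — once to guarantee local Bronstein (for Theorem~\ref{rp}) and, in the incontractible case, to supply enough minimal points in $X^n$ so that the configuration $(x,x_1,\dots,x_m)$ can be realized by minimal points whose orbit closures are controlled; in the local-Bronstein-plus-measure case, the invariant measure is what upgrades $RP_2$ to an equivalence relation and feeds the syndeticity in Lemma~\ref{syndetic}(2). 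Getting these two cases to run through the \emph{same} hyperspace argument, rather than splitting into two separate proofs, will require stating the recurrence step at the level of $M_U$ and $\mathcal X$ abstractly enough to absorb both hypotheses.
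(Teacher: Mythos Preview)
Your plan rests on the claim that $M_U\subset\mathcal X$, so that every $E\in M_U$ sits inside a single $\pi_{eq}$-fiber. This is false, and in fact it is the opposite of what is happening. Elements of $M_U$ are Hausdorff limits of $g\overline U$, and since $\pi_{eq}$ is semi-open the set $\pi_{eq}(\overline U)$ has nonempty interior; by the isometry of $(X_{eq},G)$ this interior persists along the orbit and in the limit (this is exactly Lemma~\ref{lem-2}(1)). So a typical $E\in M_U$ has $\pi_{eq}(E)$ containing a ball, and the interior saturation property is a genuine, non-vacuous statement about such $E$. In particular, for an arbitrary $x_i\in E$ you have no reason to expect $\pi_{eq}(x_i)=y_0$, so the pair $(x,x_i)$ need not lie in $RP_2(X,G)$, and the appeal to Theorem~\ref{rp} for the full tuple $(x,x_1,\dots,x_m)$ collapses. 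Restricting the $x_i$ to $E\cap\pi_{eq}^{-1}(y_0)$ would restore regional proximality but would no longer give a net approximating $E$ in the Hausdorff metric, so the hyperspace limiting step cannot recover $E$.

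Even granting some regional-proximality input, the ``minimality forces the limit to equal $E$'' step is a genuine gap: minimality of the orbit closure of $E$ in $2^X$ gives you no monotonicity statement like ``$E'\supset E$ and $E'\in\overline{Orb(E,G)}$ imply $E'=E$''. The paper avoids this entirely. It introduces the criterion $V(F)=\pi_{eq}(F)$ (Lemma~\ref{lem-s-1}), where $V(F)$ is the set of $y\in\pi_{eq}(F)$ for which some $E'\in\mathcal X_{eq}^{\mathrm{meas}}$ with $\pi_{\mathcal X}(E')=y$ sits inside $F$. The substance is then Proposition~\ref{prop-1}: every minimal set in $\overline{Orb(\overline U,G)}$ consists of sets each containing some element of $\mathcal X_{eq}^{\mathrm{meas}}$. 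Property~$(*)$ enters precisely here, in Claim~\ref{cl1}, to produce a \emph{syndetic} set of times $g$ at which some $E'\in\mathcal X_{eq}^{\mathrm{meas}}$ is $\epsilon$-covered by $g\overline U$; the two cases of $(*)$ give two different syndeticity arguments (via minimal points in $X^n$, or via Poincar\'e recurrence for a product measure). The proof of Proposition~\ref{prop-2} then partitions $\overline U$ into pieces of small $\pi_{eq}$-diameter and runs Proposition~\ref{prop-1} coordinatewise to show $V(E_*)$ is $\epsilon$-dense in $\pi_{eq}(E_*)$ for every $\epsilon>0$. Your use of Lemma~\ref{lem-rp} and Theorem~\ref{rp} is morally the same move as in Claim~\ref{cl1}, but applied at the wrong scale: it should be used to push a whole fiber into $U$, not to push a missing point $x$ into $E$.
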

	
Due to the technical nature of the proof of Proposition \ref{prop-2}, we defer it to the next section. Assuming the proposition holds, we now present a corollary that will be applied in Sections~\ref{IT-ab} and~\ref{s-mean-s-set}.
	\begin{lem}\label{lem-2}Let $(X,G)$ be a minimal tds with property $(*)$, and let $U\subset X$ be a nonempty open set with $B^{X_{eq}}_r(y_*)\subset \pi_{eq}(\overline{U})$ for some $r>0$ and $y_*\in X_{eq}$. Then, the following statements hold:
		\begin{itemize}
			\item[(1)]For every pair $(y,E)\in \overline{Orb((y_*,\overline{U}),G)}$, one has $B^{X_{eq}}_r(y)\subset \pi_{eq}(E)$.
			\medskip
			\item[(2)]For every $\epsilon\in(0,r)$,  one has
			$$\{g\in G: \pi_{eq}^{-1}(B^{X_{eq}}_{r-\epsilon}(gy_*))\subset \overline{B^X_\epsilon(gU)}\}\in\mathcal{F}_{ts}(G).$$
		\end{itemize}
	\end{lem}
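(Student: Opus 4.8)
The plan is to lift the question to the product system $\Omega:=\overline{Orb((y_*,\overline{U}),G)}\subset X_{eq}\times 2^X$, where $G$ acts diagonally (on the first coordinate via $(X_{eq},G)$, on the second via the hyperspace system). I record two facts for repeated use. First, writing $p_2$ for the projection onto the second coordinate, one checks $p_2(\Omega)=\overline{Orb(\overline{U},G)}$, so $p_2\colon\Omega\to\overline{Orb(\overline{U},G)}$ is a factor map and hence sends minimal points to minimal points. Second, since $(X_{eq},G)$ is an isometry, $B^{X_{eq}}_\rho(gy)=gB^{X_{eq}}_\rho(y)$ and $\pi_{eq}^{-1}(gB)=g\,\pi_{eq}^{-1}(B)$ for all $g\in G$, $\rho>0$, $y\in X_{eq}$, $B\subset X_{eq}$; together with $\pi_{eq}(gA)=g\,\pi_{eq}(A)$ for $A\in 2^X$ and the hypothesis $B^{X_{eq}}_r(y_*)\subset\pi_{eq}(\overline{U})$, this yields $B^{X_{eq}}_r(gy_*)\subset\pi_{eq}(g\overline{U})$ for every $g\in G$.

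For (1) I would argue directly, without needing property $(*)$. Given $(y,E)\in\Omega$, pick a net $(g_i)$ with $g_iy_*\to y$ and $g_i\overline{U}\to E$ in $(2^X,d_H)$, and fix $z\in B^{X_{eq}}_r(y)$. Eventually $d_{eq}(z,g_iy_*)<r$, hence $z\in B^{X_{eq}}_r(g_iy_*)\subset\pi_{eq}(g_i\overline{U})$, so there is $x_i\in g_i\overline{U}$ with $\pi_{eq}(x_i)=z$. Passing to a convergent subnet $x_i\to x$, the bound $d(x,E)\le d(x,x_i)+d_H(g_i\overline{U},E)\to 0$ forces $x\in E$, and continuity of $\pi_{eq}$ gives $\pi_{eq}(x)=z$; thus $z\in\pi_{eq}(E)$, proving $B^{X_{eq}}_r(y)\subset\pi_{eq}(E)$.

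For (2), I first analyze a minimal point $(y,E)$ of $\Omega$. Through $p_2$, the set $E$ is a minimal point of $\overline{Orb(\overline{U},G)}$, hence $E\in M_U$, so Proposition~\ref{prop-2} gives $\pi_{eq}^{-1}(\text{int}(\pi_{eq}(E)))\subset E$; since (1) yields $B^{X_{eq}}_r(y)\subset\text{int}(\pi_{eq}(E))$, we conclude $\pi_{eq}^{-1}(B^{X_{eq}}_r(y))\subset E$ for every $(y,E)\in Min(\Omega,G)$. Put $\mathcal{W}_\epsilon:=\{(y,F)\in X_{eq}\times 2^X:\pi_{eq}^{-1}(B^{X_{eq}}_{r-\epsilon}(y))\subset\overline{B^X_\epsilon(F)}\}$; using $\overline{B^X_\epsilon(gU)}=\overline{B^X_\epsilon(g\overline{U})}$ and $g(y_*,\overline{U})=(gy_*,g\overline{U})$, the set in (2) equals $N((y_*,\overline{U}),\mathcal{W}_\epsilon)$, so it is enough to find an open neighborhood of $\overline{Min(\Omega,G)}$ in $\Omega$ contained in $\mathcal{W}_\epsilon$ and then invoke Lemma~\ref{syndetic}(3) together with the fact that $\mathcal{F}_{ts}(G)$ is upward closed. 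Fixing $\delta\in(0,\epsilon)$: if $(y',F')$ lies within $\delta$ (max metric) of some $(y,E)\in Min(\Omega,G)$, then $B^{X_{eq}}_{r-\epsilon}(y')\subset B^{X_{eq}}_r(y)$, whence $\pi_{eq}^{-1}(B^{X_{eq}}_{r-\epsilon}(y'))\subset E$, while $d_H(E,F')<\delta<\epsilon$ gives $E\subset\overline{B^X_\epsilon(F')}$. Hence the open $\delta$-neighborhood of $Min(\Omega,G)$ in $\Omega$ lies inside $\mathcal{W}_\epsilon$ and contains $\overline{Min(\Omega,G)}$, which completes (2).

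The step I expect to be most delicate is this passage in (2) from the exact containment at minimal points to an open condition: because each $g$ is an isometry only on $X_{eq}$ and not on $X$, the target set in (2) genuinely depends on the pair $(gy_*,g\overline{U})$ and must be encoded as membership in the fixed set $\mathcal{W}_\epsilon$, and the single buffer $\epsilon$ must simultaneously absorb a $\delta$-perturbation in the $X_{eq}$-coordinate (pulling $B^{X_{eq}}_{r-\epsilon}$ back inside $B^{X_{eq}}_r$) and in the $2^X$-coordinate (pushing $E$ inside $\overline{B^X_\epsilon(F')}$); one also needs the elementary observation that $\overline{Min(\Omega,G)}$ already lies in the \emph{open} $\delta$-neighborhood of $Min(\Omega,G)$. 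Everything else reduces to Proposition~\ref{prop-2}, the factor-map property of $p_2$, and the standard hyperspace and isometry identities collected in the first paragraph.
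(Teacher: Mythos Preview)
Your proposal is correct and follows essentially the same route as the paper: for (1) both arguments pass to a sequence in the orbit and use the isometry of $(X_{eq},G)$, and for (2) both pin down the exact containment $\pi_{eq}^{-1}(B^{X_{eq}}_r(y))\subset E$ at (limits of) minimal points of $\Omega$ via Proposition~\ref{prop-2} and part~(1), then thicken to an open neighborhood of the minimal center and invoke Lemma~\ref{syndetic}(3). The only differences are cosmetic: the paper works directly with the minimal center $M_{y_*,U}$ and the sum metric $d_{eq}+d_H$ (so a single $\epsilon$ handles both coordinates at once), whereas you work with $Min(\Omega,G)$ and the max metric with an auxiliary $\delta<\epsilon$; and in (1) the paper uses Hausdorff convergence of $\pi_{eq}(g_n\overline{U})$ rather than your pointwise selection of preimages.
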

	\begin{proof}For (1), let  $(y,E)\in \overline{Orb((y_*,\overline{U}),G)}$. There exists a sequence $\{g_n\}_{n=1}^\infty$ of $G$ such that $\lim_{n\to\infty} g_ny_*=y$ and  $\lim_{n\to\infty} g_n\overline{U}=E$. Since $\pi_{eq}$ is continuous, it follows that  $\lim_{n\to\infty} g_n\pi_{eq}(\overline{U})=\pi_{eq}(E)$. Note that $B^{X_{eq}}_r(y_*)\subset \pi_{eq}(\overline{U})$ and $(X_{eq},G)$ is an isometry. One has $B^{X_{eq}}_r(g_ny_*)\subset g_n\pi_{eq}(\overline{U})$ for each $n\in\N$, and thus, by letting $n\to\infty$, $B^{X_{eq}}_r(y)\subset \pi_{eq}(E)$.

		For	(2),
		given $\epsilon\in(0,r)$, let
		$$\mathcal{U}_\epsilon=\left\{(y,E)\in X\times 2^X: \min_{(y', E')\in M_{y_*,U}}\{d_{eq}(y,y')+d_H(E,E')\}<\epsilon\right\},$$
		where  $M_{y_*,U}$ is the minimal center of $(\overline{Orb((y_*,\overline{U}),G)},G)$. 
		Then, $\mathcal{U}_\epsilon$ is an open neighborhood of $M_{y_*,U}$. By Lemma \ref{syndetic} (3), one has	\begin{equation}\label{eq1234}
			\{g\in G: g(y_*,\overline{U})\in\mathcal{U}_\epsilon\}\in\mathcal{F}_{ts}(G).
		\end{equation}
	Thus, it suffices to show that
	\[
	\{g \in G : \pi_{eq}^{-1}(B^{X_{eq}}_{r - \epsilon}(g y_*)) \subset \overline{B^X_\epsilon(g U)}\} \supset \{g \in G : g(y_*, \overline{U}) \in \mathcal{U}_\epsilon\}.
	\]
	To this end, it is enough to prove that for each $(y, E) \in \mathcal{U}_\epsilon$, the inclusion
	\[
	\pi_{eq}^{-1}(B^{X_{eq}}_{r - \epsilon}(y)) \subset \overline{B^X_\epsilon(E)}
	\]
	holds.
	
Indeed, fix $(y, E) \in \mathcal{U}_\epsilon$. Then there exists $(y', E') \in M_{y_*, U}$ such that
\[
d_{eq}(y, y') + d_H(E, E') < \epsilon.
\]		Thus, by Lemma \ref{lem-2} (1), one has
		\begin{equation}\label{eq:188.55}
			\pi_{eq}^{-1}(B^{X_{eq}}_{r-\epsilon}(y))\subset \pi_{eq}^{-1}(B^{X_{eq}}_{r}(y'))\subset \pi_{eq}^{-1}(\text{int}(\pi_{eq}(E'))).
		\end{equation}
		Note that $E'$ belongs to the minimal center of $(\overline{Orb(\overline{U},G)},G)$. Then by Proposition \ref{prop-2}, we know that $E'$ has interior saturation property, which together with \eqref{eq:188.55}, implies that  
		$$	\pi_{eq}^{-1}(B^{X_{eq}}_{r-\epsilon}(y))\subset E'\subset B^{X}_\epsilon(E) .$$
		The proof is completed.
	\end{proof}
	\subsection{Proof of Proposition \ref{prop-2}}Let $(X,G)$ be a minimal tds.
		For any $F\in 2^X$, denote by 
	\begin{align}\label{eq-def-vf}
		V(F)=\{y\in X_{eq}:\exists E\in \mathcal{X}_{eq}^{meas}, E\subset F, \pi_{\mathcal{X}}(E)=y \}.\end{align}
	We now state the following criterion for the interior saturation property, which will be used to prove Proposition~\ref{prop-2}.
	\begin{lem}\label{lem-s-1}  If $F\in 2^X$ satisfies 
		\begin{align}\label{**}
			V(F)=\pi_{eq}(F),
		\end{align} then $F\in \mathcal{E}_{isp}(X,G)$.
	\end{lem}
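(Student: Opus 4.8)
The plan is to unpack the two sides of \eqref{**} directly against the definition \eqref{defn-saturation} of the interior saturation property. Recall we must show $\pi_{eq}^{-1}(\text{int}(\pi_{eq}(F)))\subset F$. The starting point is the observation that every point $y$ in the \emph{interior} of $\pi_{eq}(F)$ should in fact lie in $V(F)$: informally, if a whole open ball around $y$ sits inside $\pi_{eq}(F)$, then one can produce an element $E\in\mathcal{X}_{eq}^{\text{meas}}$ with $E\subset F$ and $\pi_{\mathcal{X}}(E)=y$, because the measure-positivity condition defining $\mathcal{X}_{eq}^{\text{meas}}$ is witnessed by the positivity of $\nu_{eq}$ on open sets inside $\pi_{eq}(F)$. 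So the first step is to prove the inclusion
\[
\text{int}(\pi_{eq}(F))\subset V(F).
\]

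For this step I would argue as follows. Fix $y\in\text{int}(\pi_{eq}(F))$, so $B^{X_{eq}}_\rho(y)\subset\pi_{eq}(F)$ for some $\rho>0$. For each $n$, using the upper semi-continuity of $\pi_{eq}^{-1}$ (Lemma~\ref{usc}(1)) one can find, inside $F$, closed sets that project into $B^{X_{eq}}_{1/n}(y)$; more precisely, $\pi_{eq}^{-1}(B^{X_{eq}}_{1/n}(y))\cap F$ is a nonempty closed subset of $X$ contained in $F$ whose $\pi_{eq}$-image is contained in $B^{X_{eq}}_{1/n}(y)$, and such a set lies in $\mathcal{X}$ after possibly intersecting with a fiber (here I need to take a fiber $\pi_{eq}^{-1}(y')\cap F$ for a well-chosen $y'$ near $y$, using that $\pi_{eq}^{-1}(y')\subset F$ will hold on a large set). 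Taking a convergent subsequence in $2^X$ of such sets $E_n$ and calling the limit $E$, we get $E\subset F$ and $\pi_{\mathcal{X}}(E)=y$. The remaining point is that $E\in\mathcal{X}_{eq}^{\text{meas}}$: for every $\epsilon>0$, $\pi_{\mathcal{X}}(B^{\mathcal{X}}_\epsilon(E))$ contains (a relatively open neighborhood in $X_{eq}$ of) a set of points $y'$ near $y$ with $\pi_{eq}^{-1}(y')\subset B^X_\epsilon(E)$; since these $y'$ range over a set of positive $\nu_{eq}$-measure (they include a neighborhood of $y$ intersected with the full-measure set $Y$ from Lemma~\ref{lem-1}, cut down appropriately), we conclude $\nu_{eq}(\pi_{\mathcal{X}}(B^{\mathcal{X}}_\epsilon(E)))>0$. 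Hence $y\in V(F)$.

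Once $\text{int}(\pi_{eq}(F))\subset V(F)$ is established, the rest is immediate. By hypothesis $V(F)=\pi_{eq}(F)$, but more importantly we will use the definition of $V(F)$: if $y\in\text{int}(\pi_{eq}(F))$ then $y\in V(F)$, so there exists $E\in\mathcal{X}_{eq}^{\text{meas}}$ with $E\subset F$ and $\pi_{\mathcal{X}}(E)=y$; since $E\subset\pi_{eq}^{-1}(y)$ always, and we want the reverse inclusion on the fiber, here is where \eqref{**} does its work: the assumption that $V(F)$ is \emph{all} of $\pi_{eq}(F)$ (not just the interior) forces, by a maximality/closedness argument, that the $E$ attached to $y$ is the whole fiber $\pi_{eq}^{-1}(y)$. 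Indeed, if $\pi_{eq}^{-1}(y)\setminus E\neq\emptyset$, pick $x$ in it; then $\overline{\{x\}}\cup E$-type considerations, or rather the fact that $\pi_{eq}^{-1}(y)$ itself lies in $\mathcal{X}$ and every $y'\in\pi_{eq}(F)$ is in $V(F)$, lets us upgrade $E$ to $\pi_{eq}^{-1}(y)$ while staying inside $F$ — giving $\pi_{eq}^{-1}(y)\subset F$. Doing this for every $y\in\text{int}(\pi_{eq}(F))$ yields $\pi_{eq}^{-1}(\text{int}(\pi_{eq}(F)))\subset F$, i.e.\ $F\in\mathcal{E}_{isp}(X,G)$.

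The main obstacle I anticipate is the careful bookkeeping in the first step: producing an element of $\mathcal{X}_{eq}^{\text{meas}}$ sitting inside $F$ with a prescribed projection $y$, while genuinely verifying the $\nu_{eq}$-positivity condition for \emph{all} $\epsilon>0$ simultaneously. This requires coordinating the upper semi-continuity of $\pi_{eq}^{-1}$, the full-measure set $Y$ of Lemma~\ref{lem-1} where fibers behave well, and a diagonal/limit argument in the hyperspace $2^X$ to extract $E$; the subtlety is that the limit $E$ could a priori be a proper subset of $\pi_{eq}^{-1}(y)$, which is exactly why the definition of $V(F)$ only asks for \emph{some} such $E$ and why the hypothesis \eqref{**} (rather than merely $\text{int}(\pi_{eq}(F))\subset V(F)$) is what finally pins down the full fiber. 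The other steps — continuity of union in $2^X$ (Lemma~\ref{lem-0}(2)), semi-openness of $\pi_{eq}$, positivity of $\nu_{eq}$ on open sets — are routine.
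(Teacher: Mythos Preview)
Your proposal has a genuine gap at the decisive step. First, note that the ``first step'' you spend most of your effort on --- proving $\text{int}(\pi_{eq}(F))\subset V(F)$ --- is already immediate from the hypothesis $V(F)=\pi_{eq}(F)$, so that work is wasted (and the construction you sketch there is also problematic: the sets $\pi_{eq}^{-1}(B^{X_{eq}}_{1/n}(y))\cap F$ you form need not lie in $\mathcal{X}$, which by definition is the closure of the set of \emph{fibers} $\pi_{eq}^{-1}(y')$, not the collection of arbitrary closed subsets of fibers). The real content of the lemma is the step you label ``the rest is immediate'': passing from ``for $y\in\text{int}(\pi_{eq}(F))$ there is some $E\in\mathcal{X}_{eq}^{\text{meas}}$ with $E\subset F$ and $\pi_{\mathcal{X}}(E)=y$'' to ``$\pi_{eq}^{-1}(y)\subset F$''. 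Your ``maximality/closedness'' upgrade of $E$ to the full fiber is handwaving: nothing in the hypothesis prevents $E$ from being a proper subset of $\pi_{eq}^{-1}(y)$ for every $y$, and knowing that nearby $y'$ also lie in $V(F)$ only produces other (possibly proper) $E'\subset\pi_{eq}^{-1}(y')$. There is no mechanism here that forces the whole fiber into $F$.

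The paper's argument supplies exactly the missing idea. Let $Z_0\subset X_{eq}$ be the (residual) set of continuity points of $\pi_{eq}^{-1}$. For $y\in Z_0$ one has $\pi_{\mathcal{X}}^{-1}(y)=\{\pi_{eq}^{-1}(y)\}$: the \emph{only} element of $\mathcal{X}$ lying over such a $y$ is the full fiber. Hence if $y\in Z_0\cap V(F)$, the witnessing $E\in\mathcal{X}_{eq}^{\text{meas}}\subset\mathcal{X}$ is forced to equal $\pi_{eq}^{-1}(y)$, giving $\pi_{eq}^{-1}(y)\subset F$. This yields $\pi_{eq}^{-1}(Z_0\cap\text{int}(\pi_{eq}(F)))\subset F$. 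One then checks, using that $Z_0$ is residual and $\pi_{eq}$ is semi-open, that $\pi_{eq}^{-1}(Z_0\cap\text{int}(\pi_{eq}(F)))$ is dense in $\pi_{eq}^{-1}(\text{int}(\pi_{eq}(F)))$; closedness of $F$ finishes the proof. The role of the continuity set $Z_0$ --- pinning down $E$ as the full fiber at generic points --- is precisely what your proposal is missing.
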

	\begin{proof}We fix $F\in 2^X$ satisfying \eqref{**}, and prove that $F\in \mathcal{E}_{isp}(X,G)$.
		Denote by $Z_0$ the set of continuity points of the map $\pi_{eq}^{-1}$. Since $\pi_{eq}^{-1}$ is upper semi-continuous, by Lemma \ref{usc}, $Z_0$ is a residual subset of $X_{eq}$. It is not difficult to check that $\pi_{\mathcal{X}}^{-1}(y)=\{\pi_{eq}^{-1}(y)\}$ for $y\in Z_0$. Since $F$ satisfies \eqref{**},
		\begin{align}\label{34}\pi_{eq}^{-1}(Z_0\cap \text{int}(\pi_{eq}(F)))\subset F.
		\end{align}
		
		We claim that 
		\begin{align}\label{355}\pi_{eq}^{-1}(Z_0\cap \text{int}(\pi_{eq}(F)))\text{ is dense in }\pi_{eq}^{-1}(\text{int}(\pi_{eq}(F))). \end{align}If this is not true, we can find a nonempty open set $W\subset \pi_{eq}^{-1}(\text{int}(\pi_{eq}(F)))$ such that 
		\begin{align}\label{eq00}
			W\cap \pi_{eq}^{-1}(Z_0\cap \text{int}(\pi_{eq}(F)))=\emptyset. 
		\end{align}
		Thus, 
		$$\pi_{eq}(W)\cap Z_0\cap \text{int}(\pi_{eq}(F))=\emptyset.$$ 
		Since $\pi_{eq}(W)\subset \text{int}(\pi_{eq}(F))$, it follows that 
		$\pi_{eq}(W)\cap Z_0=\emptyset.$
		This is impossible, as $\pi_{eq}$ being semi-open implies $\pi_{eq}(W)$ has nonempty interior, and $Z_0$ is dense in $X_{eq}$. Therefore, \eqref{355} holds.
		
		Combining  \eqref{34}, \eqref{355} and the fact  $F$ is a closed set, one obtains that
		\begin{align*}\pi_{eq}^{-1}(\text{int}(\pi_{eq}(F)))\subset F.
		\end{align*}
		Thus, $F\in 	\mathcal{E}_{isp}(X,G)$.
	\end{proof}	
	Let 
		$$\mathcal{V}=\{F\in 2^X:	V(F)=\pi_{eq}(F)\}.$$
The preceding lemma shows that $\V \subset \mathcal{E}_{isp}(X,G)$. In what follows, we establish several properties of the set $\V$ that will be used in the proof of Proposition \ref{prop-2}.		
	\begin{lem}\label{lem-s-3}	The set
	$\V$ is a compact, $G$-invariant subset of $2^X$.
	\end{lem}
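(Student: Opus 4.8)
The plan is to prove the two assertions of Lemma~\ref{lem-s-3}, namely that $\V$ is $G$-invariant and that $\V$ is closed in $2^X$ (hence compact, as $2^X$ is compact).

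\textbf{$G$-invariance.} First I would show that for each $g\in G$ and $F\in 2^X$, we have $V(gF)=gV(F)$, where $G$ acts on $X_{eq}$ through $\pi_{eq}$. The key point is that $\mathcal{X}_{eq}^{\mathrm{meas}}$ is $G$-invariant (already noted just before Lemma~\ref{lem-1}) and that $\pi_{\mathcal{X}}$ intertwines the $G$-actions on $\mathcal{X}$ and $X_{eq}$. Concretely, if $E\in\mathcal{X}_{eq}^{\mathrm{meas}}$ satisfies $E\subset F$ and $\pi_{\mathcal{X}}(E)=y$, then $gE\in\mathcal{X}_{eq}^{\mathrm{meas}}$, $gE\subset gF$, and $\pi_{\mathcal{X}}(gE)=gy$; running this with $g^{-1}$ in place of $g$ gives the reverse inclusion. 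Combined with $\pi_{eq}(gF)=g\pi_{eq}(F)$, this yields: $F\in\V \iff V(F)=\pi_{eq}(F) \iff gV(F)=g\pi_{eq}(F) \iff V(gF)=\pi_{eq}(gF) \iff gF\in\V$. This is routine.

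\textbf{Closedness.} This is the substantive part. Let $\{F_n\}_{n\ge1}\subset\V$ with $F_n\to F$ in $2^X$; I must show $F\in\V$, i.e. $V(F)=\pi_{eq}(F)$. The inclusion $V(F)\subset\pi_{eq}(F)$ is immediate from the definition \eqref{eq-def-vf} (if $E\in\mathcal{X}_{eq}^{\mathrm{meas}}$, $E\subset F$, $\pi_{\mathcal{X}}(E)=y$, then $y=\pi_{eq}(E)\subset\pi_{eq}(F)$). For the reverse inclusion, fix $y\in\pi_{eq}(F)$; I want $y\in V(F)$. Since $\pi_{eq}(F_n)\to\pi_{eq}(F)$ in $2^{X_{eq}}$ by continuity of $\pi_{eq}$ and continuity of the induced hyperspace map, there exist $y_n\in\pi_{eq}(F_n)$ with $y_n\to y$. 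Because $F_n\in\V$, each $y_n\in V(F_n)$, so there is $E_n\in\mathcal{X}_{eq}^{\mathrm{meas}}$ with $E_n\subset F_n$ and $\pi_{\mathcal{X}}(E_n)=\pi_{eq}(E_n)=y_n$. Passing to a subsequence, $E_n\to E_*$ in $2^X$; then $E_*\in\mathcal{X}_{eq}^{\mathrm{meas}}$ since this set is closed, $E_*\subset F$ since $E_n\subset F_n$ and $F_n\to F$ (the relation ``$A\subset B$'' is closed in $2^X\times 2^X$), and $\pi_{\mathcal{X}}(E_*)=\lim\pi_{\mathcal{X}}(E_n)=\lim y_n=y$ by continuity of $\pi_{\mathcal{X}}$. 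Hence $y\in V(F)$, completing the proof.

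\textbf{Main obstacle.} The only delicate points are the ``closed graph'' facts: that $\mathcal{X}_{eq}^{\mathrm{meas}}$ is closed (stated in the text, so available), and that $\{(A,B)\in 2^X\times 2^X : A\subset B\}$ is closed in the Hausdorff topology—together with the observation that $E_n\subset F_n$, $E_n\to E_*$, $F_n\to F$ forces $E_*\subset F$, which is a standard property of the Hausdorff metric ($d_H(E_*,F)\le d_H(E_*,E_n)+d_H(E_n,F)\le d_H(E_*,E_n)+d(E_n,F_n)+d_H(F_n,F)$ with the middle term zero). I do not anticipate a genuine difficulty here; the lemma is essentially a compactness/continuity bookkeeping statement, and the work lies in correctly invoking the continuity of $\pi_{\mathcal{X}}$, of $\pi_{eq}$ on the hyperspace level, and the closedness of $\mathcal{X}_{eq}^{\mathrm{meas}}$.
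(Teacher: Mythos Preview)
Your proposal is correct and follows essentially the same approach as the paper's proof: the $G$-invariance is dispatched quickly (you give more detail than the paper, which simply declares it clear), and the closedness argument proceeds exactly as in the paper by taking $F_n\to F$, choosing $y_n\in\pi_{eq}(F_n)$ converging to a given $y\in\pi_{eq}(F)$, lifting each $y_n$ to some $E_n\in\mathcal{X}_{eq}^{\mathrm{meas}}$ with $E_n\subset F_n$, and passing to a subsequential limit $E_*$ to witness $y\in V(F)$. The only quibble is that your displayed inequality for $d_H(E_*,F)$ in the ``Main obstacle'' paragraph is not quite the right way to deduce $E_*\subset F$ (you want the one-sided argument: for $x\in E_*$ pick $x_n\in E_n\subset F_n$ with $x_n\to x$, then $d(x,F)\le d(x,x_n)+d_H(F_n,F)\to 0$), but this is a minor slip and the conclusion is correct.
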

	\begin{proof} 
	The $G$-invariance of $\mathcal{V}$ is clear, so it remains to establish its compactness.  
	Suppose there exists a sequence $\{E_i\}_{i=1}^\infty \subset 2^X$ such that $V(E_i) = \pi_{eq}(E_i)$ for all $i \in \mathbb{N}$ and
	\begin{align}\label{e-0}
		\lim_{i \to \infty} E_i = E.
	\end{align}
	Our goal is to show that $V(E) = \pi_{eq}(E)$, i.e., $E \in \mathcal{V}$, which would imply the compactness of $\mathcal{V}$.
It is clear that $V(E) \subset \pi_{eq}(E)$, so it suffices to prove the reverse inclusion: $\pi_{eq}(E) \subset V(E)$.
		
		By \eqref{e-0},
	$\lim_{i\to\infty}\pi_{eq}(E_i)=\pi_{eq}(E)$, which implies that
	for any $y\in \pi_{eq}(E)$, there exist $y_i\in \pi_{eq}(E_i)$, $i\in\N$ such that 
		\begin{align}\label{e-2}\lim_{i\to\infty}y_i=y.\end{align}
		By 	\eqref{eq-def-vf},  for each $i\in\N$, there exists $E_i'\in\mathcal{X}_{eq}^{meas}$ such that
		\begin{align}\label{e-5}\pi_{\mathcal{X}}(E_i')=y_i,\end{align}
		and 	\begin{align}\label{e-5-1}E_i'\subset E_i.\end{align}
		Passing by a subsequence, we can assume that 
		\begin{align}\label{e-3}\lim_{i\to\infty}E_i'=E'\in\mathcal{X}_{eq}^{meas}.\end{align}
	
		Combining \eqref{e-2}, \eqref{e-5}, and \eqref{e-3} yields $y=\pi_{\mathcal{X}}(E')$,  and from \eqref{e-0}, \eqref{e-5-1}, and \eqref{e-3}, we deduce $E'\subset E$. Thus $y\in   V( E)$ and so by the arbitrariness of $y\in\pi_{{eq}}(E)$, one has   $\pi_{eq}(E)\subset V( E)$. Combining this with the argument at the beginning of the proof, we complete the proof.
	\end{proof}

	To apply the criterion in proving Proposition \ref{prop-2}, we first establish the following preparatory result.
	\begin{prop}\label{prop-1}Let $(X,G)$ be a minimal tds with property $(*)$, and let $U\subset X$ be a nonempty open subset.  Assume $M\subset \overline{Orb(\overline{U},G)}$ is a minimal set. Then,
		$$M\subset \{E\in 2^X: \exists E'\in \mathcal{X}^{meas}_{eq},E'\subset E\}.$$
	\end{prop}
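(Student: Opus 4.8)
\textbf{Proof proposal for Proposition \ref{prop-1}.}

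The plan is to show that any minimal set $M \subset \overline{Orb(\overline{U},G)}$ contains a point $E$ which, in turn, contains some $E' \in \mathcal{X}^{\mathrm{meas}}_{eq}$; by $G$-invariance of $\mathcal{X}^{\mathrm{meas}}_{eq}$ and minimality of $M$, the same will then hold for every element of $M$. First I would pick an arbitrary $E_0 \in M$. Since $E_0 \in \overline{Orb(\overline U, G)}$, there is a net (or sequence, by metrizability) $g_n \in G$ with $g_n \overline U \to E_0$. The key idea is to simultaneously track what happens to a carefully chosen subset of $\overline U$: fix a point $y_* \in X_{eq}$ and a radius $r>0$ with $B^{X_{eq}}_r(y_*) \subset \mathrm{int}(\pi_{eq}(\overline U))$ (possible since $\pi_{eq}$ is semi-open, so $\pi_{eq}(U)$ has nonempty interior, and $\overline U \supset U$), and consider the fiber $F_* = \pi_{eq}^{-1}(y_*) \cap \overline U$, which is a nonempty closed set. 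Passing to a further subsequence, arrange $g_n y_* \to y_\infty$ in $X_{eq}$ and $g_n F_* \to F_\infty$ in $2^X$; then $F_\infty \subset E_0$, $\pi_{eq}(F_\infty) = \{y_\infty\}$, so $F_\infty \subset \pi_{eq}^{-1}(y_\infty)$.

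The crux is now to produce, inside $E_0$ (or inside some element of $M$, using minimality to translate), an element of $\mathcal{X}^{\mathrm{meas}}_{eq}$ — equivalently a set $E'$ with $E' \subset \pi_{eq}^{-1}(\cdot)$ and $\nu_{eq}(\pi_{\mathcal X}(B^{\mathcal X}_\epsilon(E'))) > 0$ for all $\epsilon > 0$. Here is where property $(*)$ enters. In case (2), where $(X,G)$ carries an invariant measure $\mu$, I would push a suitably chosen conditional measure / fiber structure through the limit: the measure disintegration $\mu = \int \mu_y \, d\nu_{eq}(y)$ gives, for $\nu_{eq}$-a.e.\ $y$, a fiber measure $\mu_y$ supported on $\pi_{eq}^{-1}(y)$; the assignment $y \mapsto \mathrm{supp}(\mu_y)$ is measurable, and by Lemma~\ref{lem-1} (or rather the structure of $\mathcal X^{\mathrm{meas}}_{eq}$) these supports land in $\mathcal X^{\mathrm{meas}}_{eq}$. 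The point is to show that under the limit $g_n \overline U \to E_0$ one cannot lose all mass: because $B^{X_{eq}}_r(y_*) \subset \mathrm{int}(\pi_{eq}(\overline U))$, a full-measure family of fibers over $B^{X_{eq}}_r(y_*)$ meets $\overline U$, so their limits under $g_n$ are contained in $E_0$, and a compactness + lower-semicontinuity-of-measure argument (using that $\nu_{eq}$ is $G$-invariant and that $B^{X_{eq}}_{r}(g_n y_*)$ has fixed measure $\nu_{eq}(B_r) > 0$) yields an element $E' \in \mathcal X^{\mathrm{meas}}_{eq}$ with $E' \subset E_0$. In case (1), incontractibility, I would instead use the Bronstein property of $(X^2,G)$: by Theorem~\ref{rp} and Lemma~\ref{lem-rp}, points of $\pi_{eq}^{-1}(y)$ for a residual — hence in particular for a $\nu_{eq}$-full — set of $y$ can be approximated by minimal points of high powers, which forces the corresponding fibers to persist as genuine fibers in the hyperspace limit; combined with Lemma~\ref{lem-1} this again places a limiting set in $\mathcal X^{\mathrm{meas}}_{eq}$.

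I expect the main obstacle to be exactly this last transfer step: verifying that after applying $g_n$ and passing to the Hausdorff limit, the limiting subset of $E_0$ genuinely lies in $\mathcal X^{\mathrm{meas}}_{eq}$ rather than in some thinner set on which $\nu_{eq} \circ \pi_{\mathcal X}$ of small balls could vanish. The measure-theoretic case requires care because Hausdorff convergence of supports is only upper semicontinuous in general ($\mathrm{supp}(\mu_y)$ can jump downward), so one must use the invariance of $\mu$ and a weak-$*$ limit of the measures $g_n (\mu|_{\pi_{eq}^{-1}(B_r(y_*))})$ to control the limiting support from below; the incontractible case requires the careful interplay between density of minimal points in $X^n$, the relation $RP_K$, and the semi-openness of $\pi_{eq}$. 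Once an element of $\mathcal X^{\mathrm{meas}}_{eq}$ is trapped inside one element of $M$, minimality of $(M,G)$ and $G$-invariance of $\mathcal X^{\mathrm{meas}}_{eq}$ propagate the conclusion to all of $M$, finishing the proof.
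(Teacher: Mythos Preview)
Your reduction to finding a single $E\in M$ containing some $E'\in\mathcal X^{\mathrm{meas}}_{eq}$, then propagating via minimality and $G$-invariance of $\mathcal X^{\mathrm{meas}}_{eq}$, matches the paper. But the heart of your argument has a genuine gap: you track the set $F_*=\pi_{eq}^{-1}(y_*)\cap\overline U$ through the limit $g_n\overline U\to E_0$, yet $F_*$ is in general a \emph{proper} subset of the fiber $\pi_{eq}^{-1}(y_*)$, and nothing in your argument prevents its Hausdorff limit $F_\infty$ from being a thin subset of a fiber lying outside $\mathcal X^{\mathrm{meas}}_{eq}$. Your measure-theoretic patch does not help: you write that ``a full-measure family of fibers over $B^{X_{eq}}_r(y_*)$ meets $\overline U$, so their limits under $g_n$ are contained in $E_0$'', but only the \emph{intersections} $\pi_{eq}^{-1}(y)\cap\overline U$ get carried into $E_0$, not whole fibers (or supports $\mathrm{supp}(\mu_y)$), and a weak-$*$ limit of the restricted measures $g_n(\mu|_{\pi_{eq}^{-1}(B_r(y_*))\cap\overline U})$ controls only these partial fibers. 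The same issue breaks case~(1).

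What is missing is a mechanism that forces an \emph{entire} element of $\mathcal X^{\mathrm{meas}}_{eq}$ into a translate $g\overline U$, and this is exactly where the paper uses the regional proximal structure rather than a passive limit. Fix any $E_*\in\mathcal X^{\mathrm{meas}}_{eq}$ over some $y_*$; take an $\epsilon/2$-net $\{x_i\}_{i=1}^n$ of the \emph{full} fiber $\pi_{eq}^{-1}(y_*)$. Since all the $x_i$ lie in one fiber, Theorem~\ref{rp} and Lemma~\ref{lem-rp} give small neighborhoods $V_i\ni x_i$ and a single $g_*\in G$ with $g_*V_i\subset U$ for every $i$. Now property~$(*)$ enters: in case~(1) choose a minimal point $(x_1',\dots,x_n')\in V_1\times\cdots\times V_n$, whose syndetic return set $\mathcal S$ to $\prod V_i$ gives, for each $g\in\mathcal S$, the chain $E_*\subset\pi_{eq}^{-1}(y_*)\subset B_\epsilon^X(\{gx_i'\})\subset B_\epsilon^X(gg_*^{-1}\overline U)$; in case~(2) use $(\mu^{\times n})(\prod V_i)>0$ and Poincar\'e recurrence for the same conclusion. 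Thus the set of $g$ with $E_*\subset B_\epsilon^X(g\overline U)$ is syndetic; intersecting with the thick set $\{g:g\overline U\in B_\epsilon^{\mathcal X}(M)\}$ (Lemma~\ref{syndetic}(4)) produces, for each $\epsilon>0$, some $E_\epsilon\in M$ with $E_*\subset B_{2\epsilon}^X(E_\epsilon)$, and a final $\epsilon\searrow0$ compactness argument finishes. The point you should take away is that the local Bronstein condition is used not merely to approximate but to \emph{collapse} an entire fiber into $U$ via a single group element.
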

	\begin{proof}Let  
		$$\mathcal{M}=\{ E\in 2^X: \exists E'\in\mathcal{X}_{eq}^{meas}, E'\subset E\}.$$
		By the compactness and $G$-invariance of $\mathcal{X}_{eq}^{meas}$, it is straightforward to see that $\mathcal{M}$ is a compact and $G$-invariant subset of $2^X$. Combining this with the minimality of $M$, to prove Proposition \ref{prop-1}, it suffices to show the existence of some $E_0\in M$ such that $E_0\in \mathcal{M}$.
		Furthermore, we only need to prove for any $\epsilon > 0$, there exists $E_\epsilon\in M$ such that 	\begin{align}\label{p-1}
			E_\epsilon\in \{E\in 2^X: \exists E'\in \mathcal{X}^{meas}_{eq},E'\subset B_{2\epsilon}^X(E)\}.
		\end{align}
		Indeed,	once this is  established, by letting $\epsilon\searrow 0$ and invoking the compactness of $ \mathcal{X}_{eq}^{meas}$ and $M$, we can obtain the existence of some $E_0\in M$ such that $E_0\in \mathcal{M}$.

		We now  prove \eqref{p-1} for a fixed $\epsilon > 0$. We have the following claim.
	\begin{cl}\label{cl1}The set
		\begin{align}\label{eq-syn-1}\mathcal{E}_{\epsilon}^X=\{g\in G: \exists E'\in \mathcal{X}^{meas}_{eq},E'\subset B_\epsilon^X(g\overline{U})\}\in\mathcal{F}_s(G).
	\end{align}
	\end{cl}
	We postpone the proof of the claim and first assume its validity to complete the proof of Proposition \ref{prop-1}.  Since $M\subset \overline{Orb(\overline{U},G)}$ is a minimal set, by Lemma \ref{syndetic} (4),
	\begin{align}\label{eq-syn-2}
		\mathcal{E}^\mathcal{X}_{\epsilon}=\{g\in G: g\overline{U}\in B^\mathcal{X}_\epsilon(M)\}\in\mathcal{F}_t(G).
		\end{align}
	Combining \eqref{eq-syn-1}, \eqref{eq-syn-2} and  Lemma \ref{F} (2), we can choose $g_\epsilon\in \mathcal{E}^\mathcal{X}_{\epsilon}\cap \mathcal{E}_\epsilon^X$.  Thus, there exists $E_\epsilon'\in \mathcal{X}_{eq}^{meas}$ such that 
	\[E_\epsilon'\overset{\eqref{eq-syn-1}}\subset B_\epsilon^X(g_\epsilon\overline{U})\overset{\eqref{eq-syn-2}}\subset  B^\mathcal{X}_{2\epsilon}(M),\]
	and hence there exists  $E_\epsilon\in M$ such that  
	$E_\epsilon'\subset  B_{2\epsilon}^X(E_\epsilon).$  Combined with the discussion at the beginning of the proof, this completes the proof of Proposition \ref{prop-1}.
	\end{proof}
It remains to complete the proof of Claim \ref{cl1}. 
\begin{proof}[Proof of Claim \ref{cl1}]
	Fix $E_*\in\mathcal{X}_{eq}^{meas}$, and let $y_*=\pi_\mathcal{X}(E_*)$. Then,
	\begin{align}\label{eq-1}E_*\subset \pi_{eq}^{-1}(y_*).
	\end{align}
	Let $\{x_i\}_{i=1}^n \text{ be an }({\epsilon}/{2})\text{-dense subset of }\pi^{-1}_{eq}(y_*).$
	Due to Remark \ref{rem-1}, both cases of property $(*)$ satisfy the local Bronstein condition. Using Lemma \ref{lem-rp} and Theorem \ref{rp}, there exists an open subset $V_i$ for $1\le i\le n$ such that 
	\begin{align}\label{eq-3-1}g_*V_i\subset U,
	\end{align}
	and 
	\begin{align*} V_i\subset B_{\epsilon/2}^X(x_i).
	\end{align*}
	The later implies 
	\begin{align}\label{eq-2}
		\pi_{eq}^{-1}(y_*)\subset B^X_\epsilon(\{z_1,\cdots,z_n\})\text{ whenever }(z_1,\cdots,z_n)\in V_1\times\cdots\times V_n.
	\end{align}
	
We now proceed to consider the two cases in the property $(*)$ separately.
In the case where $(X^n, G)$ has dense minimal points, we can find a minimal point $(x_1', \ldots, x_n')$ of $(X^n, G)$ contained in $V_1 \times \cdots \times V_n$. By Lemma \ref{syndetic} (1),
	$$\mathcal{S}_1=\{g\in G:g(x_1',\cdots,x_n')\in  V_1\times V_2\times \cdots\times V_n\}\in\mathcal{F}_s(G).$$
	Hence, for each $g\in \mathcal{S}_1$,
	$$E_*\overset{\eqref{eq-1}}\subset \pi_{eq}^{-1}(y_*)\overset{\eqref{eq-2}}\subset B^X_\epsilon(g\{x_1',\cdots,x_n'\})\overset{\eqref{eq-3-1}}\subset B^X_\epsilon(gg_*^{-1}\overline{U}).$$
	Thus, $\mathcal{E}_{\epsilon}^X$ is a syndetic set as it contains a syndetic set $\mathcal{S}_1g_*^{-1}$.
	
In the case where $(X, G)$ admits an ergodic probability measure $\mu$, we proceed as follows.  Since  $(X,G)$ is minimal, it follows that  $\mu$ has full support. Then, $$(\mu\times\mu\times\cdots\times\mu)(V_1\times V_2\times \cdots\times V_n)>0.$$
	Thus, by Lemma \ref{syndetic} (2),	\begin{align*}
		\mathcal{S}_2&=\{g\in G:g(V_1\times V_2\times\cdots\times V_n)\cap (V_1\times V_2\times \cdots\times V_n)\neq\emptyset \}\in\mathcal{F}_s(G).\end{align*}
	Hence, for $g\in\mathcal{S}_2$,
	$$E_*\overset{\eqref{eq-1}}\subset \pi^{-1}_{eq}(y_*)\overset{\eqref{eq-2}}\subset B^X_\epsilon(\cup_{i=1}^n((gV_i)\cap V_i))\overset{\eqref{eq-3-1}}\subset B^X_\epsilon(gg_*^{-1}\overline{U}).$$
	Thus, $\mathcal{E}_{\epsilon}^X$ is a syndetic set as it contains a syndetic set $\mathcal{S}_2g_*^{-1}$. Hence, the proof of Claim \ref{cl1} is complete.
\end{proof}
	
	Now we are able to prove Proposition \ref{prop-2}.
	\begin{proof}[Proof of Proposition \ref{prop-2}] To prove that
		\[
		M_U = \overline{Min\left(\overline{Orb(\overline{U}, G)}, G\right)} \subset \mathcal{E}_{isp}(X, G),
		\]
		it suffices, by Lemmas~\ref{lem-s-1} and~\ref{lem-s-3}, to show that every $E_* \in Min(\overline{Orb(\overline{U}, G)}, G)$ satisfies $V(E_*) = \pi_{eq}(E_*)$.
		To this end, it is enough to show that for any $\epsilon > 0$, the set $V(E_*)$ is $\epsilon$-dense in $\pi_{eq}(E_*)$. Since both $V(E_*)$ and $\pi_{eq}(E_*)$ are closed subsets of $X_{eq}$, this implies $V(E_*) = \pi_{eq}(E_*)$ by letting $\epsilon \searrow 0$.
		
		Now, we fix $\epsilon>0$ and  $E_*\in Min(\overline{Orb(\overline{U},G)},G)$ to prove the set $V(E_*)$ is $\epsilon$-dense in $\pi_{eq}(E_*)$. Let $\{y_1,\cdots,y_I\}\subset \pi_{eq}(U)$ be a finite $\epsilon/2$-dense subset of $\pi_{eq}(\overline{U})$.  Then,
		$\pi_{eq}(\overline{U})\subset \cup_{i=1}^IB_{\epsilon/2}^{X_{eq}}(y_i)$. For $i\in\{1,2,\cdots,I\}$, denote $B_i=\pi_{eq}^{-1}(B_{\epsilon/2}^{X_{eq}}(y_i))\cap U$, which is a nonempty open subset of $X$.  Then, 
		\begin{align}\label{eq-5}\text{diam}(\pi_{eq}(\overline{B_i}))<\epsilon,
		\end{align} and 
		\begin{align}\label{eq6}\overline{U}=\cup_{i=1}^I\overline{B_i}.
		\end{align}
		
		We consider a tds $(\overline{Orb((\overline{U}, \overline{B_1},\cdots,\overline{B_I}),G)},G)$. Let   $$p:(\overline{Orb((\overline{U}, \overline{B_1},\cdots,\overline{B_I}),G)},G)\to(\overline{Orb(\overline{U},G)},G)$$ be the  factor map to the first coordinate, and  let $$p_i:(\overline{Orb((\overline{U}, \overline{B_1},\cdots,\overline{B_I}),G)},G)\to(\overline{Orb(\overline{B_i},G)},G)$$ be the factor map to  the $(i+1)$-th coordinate for each $i=1,\cdots,I$.
		
		Since $E_*\in Min(\overline{Orb(\overline{U},G)},G)$, there exist a minimal set $M$ of $(\overline{Orb(\overline{U},G)},G)$ such that $E_*\in M$, and  a minimal set $L$ of  $(\overline{Orb((\overline{U}, \overline{B_1},\cdots,\overline{B_I}),G)},G)$ such that $p(L)=M$.
		
		Fix
		$(E_*,E_{*,1},\cdots,E_{*,I})\in L.
		$
		Then by Lemma \ref{lem-0} (2) and \eqref{eq6}, we have
		\begin{align}\label{eq5}E_*=\cup_{i=1}^IE_{*,i}.
		\end{align}
		Since	 $E_{*,i}\in p_i(L)$ and   $p_i(L)$ is a minimal set, it follows from Proposition \ref{prop-1} that there exists an element 
		\begin{align}\label{eq-6}E_i'\in \mathcal{X}_{eq}^{meas}\text{ such that }E_i'\subset E_{*,i}\subset E_*\text{ for }i=1,\cdots,I. 
		\end{align}
		
		Consider the tds $(Y,G)=((\pi_{eq}\times\cdots\times\pi_{eq})(\overline{Orb((\overline{U}, \overline{B_1},\cdots,\overline{B_I}),G)}),G)$. Due to the fact that $\pi_{eq}$ is a factor map, it follows that  
		$$Y=\overline{Orb((\pi_{eq}(\overline{U}), \pi_{eq}(\overline{B_1}),\cdots,\pi_{eq}(\overline{B_I})),G)}.$$
		Thus, $(\pi_{eq}(E_*),\pi_{eq}(E_{*,1}),\cdots,\pi_{eq}(E_{*,I}))\in Y$.
		By \eqref{eq-5} and Lemma \ref{lem-0} (1), \begin{align}\label{eq-7}\text{diam}(\pi_{eq}(E_{*,i}))=\text{diam}(\pi_{eq}(B_{i}))<\epsilon\text{, for $i=1,\cdots,I$.}
		\end{align}
	By \eqref{eq5}, we have $\pi_{eq}(E_*) = \cup_{i=1}^I \pi_{eq}(E_{*,i})$. Combining this with \eqref{eq-6} and \eqref{eq-7}, we conclude that
	\[
	\{\pi_{\mathcal{X}}(E_i') : i = 1, \ldots, I\} \text{ is } \epsilon\text{-dense in } \pi_{eq}(E_*).
	\]
	Moreover, by \eqref{eq-6}, each $\pi_{\mathcal{X}}(E_i')$ belongs to $V(E_*)$, and hence $V(E_*)$ is $\epsilon$-dense in $\pi_{eq}(E_*)$.
	This completes the proof of Proposition~\ref{prop-2}, in view of the argument at the beginning of the proof.
	\end{proof}
	\section{IT-set}\label{IT-ab} 
	
	In this section, we prove Theorem \ref{main1'}. To achieve this, we establish a stronger result, Theorem \ref{main1}. The key lies in discovering a new approach (Claim \ref{C-11}) for constructing infinite independence sets.
	\subsection{Statement of main result and corollaries} Recall that 
	$$\mathcal{X}_{eq}^{\text{meas}}=\{E\in\mathcal{X}:\nu_{eq}(\pi_\mathcal{X}(B^\mathcal{X}_{\epsilon}(E))>0, \forall \epsilon>0\},$$ 
	where  $\mathcal{X}=\overline{\{\pi_{eq}^{-1}(y): y\in X_{eq}\}}\text{ and }\pi_\mathcal{X}: \mathcal{X}\to X_{eq},~E\mapsto\pi_{eq}(E)$. In Lemma \ref{lem-1}, we have proven that the set $\{y\in X_{eq}:\pi_{eq}^{-1}(y)\in\mathcal{X}_{eq}^{meas}\}$ is a dense $G_\delta$ subset of $X_{eq}$ with full $\nu_{eq}$-measure.
	Thus, Theorem \ref{main1'} is a direct corollary of the following theorem.
	\begin{thm}\label{main1}Let $(X,G)$ be a minimal tds with property $(*)$. Then, every $E_*\in\mathcal{X}_{eq}^{\text{meas}}$ is an IT-set.	
	\end{thm}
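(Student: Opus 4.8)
The plan is to fix $E_*\in\mathcal{X}_{eq}^{\text{meas}}$, an arbitrary $K$-tuple $(x_1,\dots,x_K)\in(E_*)^K$ (repetitions allowed), and open neighborhoods $U_k\ni x_k$. Since an independence set for a tuple of smaller open sets is again one for a tuple of larger ones, we may shrink and assume each $U_k$ has arbitrarily small diameter. Put $y_0=\pi_{\mathcal{X}}(E_*)$, so $x_1,\dots,x_K$ all lie in the single fiber $\pi_{eq}^{-1}(y_0)$. By Remark~\ref{rem-1} the system is local Bronstein, hence $RP_2(X,G)=S_{eq}$ is an equivalence relation; thus $(x_i,x_j)\in RP_2(X,G)$ for all $i,j$, and Theorem~\ref{rp} gives $(x_1,\dots,x_K)\in RP_K(X,G)$. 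Applying Lemma~\ref{lem-rp} to a small open set $U$ (say a tiny ball around $x_1$) then produces, for a suitably small $\epsilon>0$, an element $h\in G$ and nonempty open sets $V_k\subset U_k\cap B^X_\epsilon(x_k)$ with $hV_1,\dots,hV_K\subset U$; it therefore suffices to build an infinite independence set for $(V_1,\dots,V_K)$.

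The next step is to feed $U$ into Lemma~\ref{lem-2}. By semi-openness of $\pi_{eq}$ fix $r>0$ and $y_*\in X_{eq}$ (necessarily close to $y_0$, since $U$ is small) with $B^{X_{eq}}_r(y_*)\subset\pi_{eq}(\overline U)$; Lemma~\ref{lem-2}(2) then yields, for each small $\epsilon'\in(0,r)$, a thickly syndetic set $\mathcal{T}_{\epsilon'}=\{g\in G:\pi_{eq}^{-1}(B^{X_{eq}}_{r-\epsilon'}(gy_*))\subset\overline{B^X_{\epsilon'}(gU)}\}$. In parallel, $E_*\in\mathcal{X}_{eq}^{\text{meas}}$ means $\nu_{eq}\big(\pi_{\mathcal{X}}(B^{\mathcal{X}}_\epsilon(E_*))\big)>0$ for all $\epsilon>0$; together with minimality of $X_{eq}$ and Lemma~\ref{syndetic} this produces syndetic sets of return times in $X_{eq}$ that carry fibers Hausdorff-close to $E_*$ into a neighborhood of the fiber over $y_*$. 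Because a syndetic set meets every thick set (Lemma~\ref{F}(2)) and finite intersections of thickly syndetic sets remain thickly syndetic (Lemma~\ref{F}(1)), these two reservoirs of group elements can always be intersected.

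I would then build the independence set $J=\{g_1,g_2,\dots\}$ one element at a time, keeping the invariant that $J_n=\{g_1,\dots,g_n\}$ is a finite independence set for $(V_1,\dots,V_K)$ such that every pattern on $J_n$ admits a witness lying in a fixed small neighborhood of $E_*$ in $X$. To pass from $J_n$ to $J_{n+1}$, choose $g_{n+1}$ in the nonempty intersection of a thickly syndetic set from Lemma~\ref{lem-2}(2) with a syndetic return-time set coming from $\mathcal{X}_{eq}^{\text{meas}}$, so that $g_{n+1}$ is simultaneously a ``good'' element in the sense of $\mathcal{T}_{\epsilon'}$ and recurs strongly to the fiber over $y_0$; then for every one of the $K^{n}$ patterns $\tau$ on $J_n$ and every value at $g_{n+1}$, a witness is located in a fiber $\pi_{eq}^{-1}(y)$ that is Hausdorff-close to $E_*$ --- hence contains points near each $x_k$ --- and the inclusions defining $\mathcal{T}_{\epsilon'}$, together with continuity of $h$ and semi-openness of $\pi_{eq}$, force the orbit of that witness into the prescribed $V_k$'s. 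Then $J=\bigcup_n J_n$ is an infinite independence set; this is the content of Claim~\ref{C-11}. (One could alternatively first establish the statement for the fibers $\pi_{eq}^{-1}(y)$ over a dense $G_\delta$ set and pass to limits, using that the collection of IT-sets is closed under Hausdorff limits, but treating all of $\mathcal{X}_{eq}^{\text{meas}}$ directly seems cleaner.)

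The main obstacle is exactly this coordination, isolated as Claim~\ref{C-11}: exhibiting one group element $g_{n+1}$ that works simultaneously for all $K^{n}$ patterns already present while keeping every witness inside the controlled neighborhood of $E_*$, so that the induction can be iterated indefinitely. This forces the three density notions to interlock --- Lemma~\ref{lem-2}(2) makes the set of good elements thickly syndetic and, crucially, independent of the pattern, while $E_*\in\mathcal{X}_{eq}^{\text{meas}}$ together with Lemma~\ref{syndetic}(2) supplies a syndetic stock of base points whose fibers approximate $E_*$ --- and property~$(*)$ enters precisely through Lemma~\ref{lem-2}, hence through Proposition~\ref{prop-2}; the example in Remark~\ref{rem-1} shows that this hypothesis cannot be dropped.
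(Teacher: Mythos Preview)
Your overall architecture matches the paper's: an inductive construction of $g_1,g_2,\ldots$ where at stage $n$ one carries $K^n$ nonempty open witness sets $W_m$, uses Lemma~\ref{lem-2}(2) to obtain thickly syndetic sets of ``good'' $g$ for each $W_m$, and intersects these with a syndetic set of simultaneous returns to $Z_*=\pi_{\mathcal X}(B^{\mathcal X}_{\epsilon_*/2}(E_*))$. The gap is precisely in producing that last syndetic set. What you need is that
\[
\mathcal N=\{g\in G:\ gy_m\in B^{X_{eq}}_{r/2}(Z_*)\text{ for every }m=1,\dots,K^n\}
\]
is syndetic, where the $y_m\in X_{eq}$ are base points of the $K^n$ witness sets. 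Lemma~\ref{syndetic}(2) only gives that each individual $N(y_m,B^{X_{eq}}_{r/2}(Z_*))$ is syndetic, and finite intersections of syndetic sets need not be syndetic (or even nonempty: already in a two-point rotation the return sets to a single point from the two points are disjoint). So the ``syndetic stock'' you invoke does not survive the intersection over all $K^n$ patterns, and without it the induction cannot be iterated.

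The paper resolves this by passing to the Ellis group $E(X_{eq})$ with its Haar measure $\tilde\nu$ and carrying, as the true inductive invariant, the condition $\tilde\nu\big(\bigcap_m \tilde Z_* h_m^{-1}\big)>0$ for suitably chosen $h_m\in E(X_{eq})$ with $\tilde\pi(h_m)\in\pi_{eq}(W_m)$. Lemma~\ref{lem:gap} converts this positive-measure intersection into the syndeticity of $\mathcal N$, and Lemma~\ref{ll} (continuity of $A\mapsto\tilde\nu(\bigcap_m Ah_m^{-1})$ in the $h_m$) guarantees that after refining each $W_m$ into $W_{m,1},\dots,W_{m,K}$ one can choose new $h_{m,k}$ close to $h_m$ so that the intersection remains of positive measure. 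This measure-theoretic invariant on the compact group $E(X_{eq})$ is exactly the missing ingredient; your sketch has no substitute for it. (Incidentally, the preliminary reduction via Lemma~\ref{lem-rp} is not needed here---that lemma is already consumed in the proof of Proposition~\ref{prop-2} and hence of Lemma~\ref{lem-2}; the paper works directly with the balls $B^X_{\epsilon_*}(x_k)$.)
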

	The proof of Theorem \ref{main1} is intricate; therefore, we defer its presentation to Subsection \ref{subsection-proof-meas}. For now, we assume its validity to derive some corollaries.
	\begin{cor}\label{cor-B1}Let $(X,G)$ be a minimal tds with property $(*)$.
		Then, the following statements hold:
		\begin{enumerate}[(i)]
			\item  If $(X,G)$ has no essential $K$-IT-tuple for some $K\ge 2$,  then $(X,G)$ is regular $K'$ to one for some $1\le K'\le K-1$.
			\item If $\pi_{eq}$ is open, then $\pi_{eq}^{-1}(y)$ is an IT-set for every $y\in X_{eq}$.
			\item Let $N\in\mathbb{N}\cup\{\infty\}$ and $ K\in\mathbb{N}$ with $2\le K\le N$. 	If $\pi_{eq}$ is almost $N$ to one, then for every $y\in X_{eq}$, $\pi_{eq}^{-1}(y)$ contains essential $K$-IT-tuples.
			\item If $(X,G)$ has no essential $K$-IT-tuples for some $K\ge 2$, then $|\mathcal{M}^e(X,G)|\le K-1$.
		\end{enumerate}
	\end{cor}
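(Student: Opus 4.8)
The plan is to derive all four items from Theorem~\ref{main1} together with the structural results of Section~\ref{sec-isp}; the only genuine work is to pass from statements about the ``measure-generic'' family $\mathcal{X}_{eq}^{\text{meas}}$ to statements about \emph{every} fibre, or about the whole simplex of ergodic measures.

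For (i) I would first observe that no element of $\mathcal{X}_{eq}^{\text{meas}}$ can be large: if some $E_*\in\mathcal{X}_{eq}^{\text{meas}}$ had $|E_*|\ge K$, then $E_*$ is an IT-set by Theorem~\ref{main1}, so any $K$ distinct points of $E_*$ would form an essential $K$-IT-tuple, contradicting the hypothesis; hence $|E_*|\le K-1$ for all $E_*\in\mathcal{X}_{eq}^{\text{meas}}$, and by Lemma~\ref{lem-1} this gives $|\pi_{eq}^{-1}(y)|\le K-1$ for $\nu_{eq}$-a.e.\ $y$. To upgrade ``a.e.\ bounded'' to ``regular $K'$ to one'' I would use ergodicity of $\nu_{eq}$: for each $m$ the set $A_m=\{y\in X_{eq}:|\pi_{eq}^{-1}(y)|\ge m\}$ is $G$-invariant and $\nu_{eq}$-measurable (being analytic, as the continuous image of a Borel subset of $X^{m}$), so $\nu_{eq}(A_m)\in\{0,1\}$; since $\nu_{eq}(A_1)=1$ and $\nu_{eq}(A_K)=0$, the integer $K'=\max\{m:\nu_{eq}(A_m)=1\}$ satisfies $1\le K'\le K-1$ and $\nu_{eq}(\{|\pi_{eq}^{-1}(y)|=K'\})=\nu_{eq}(A_{K'})-\nu_{eq}(A_{K'+1})=1$.

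Item (ii) is immediate from Lemma~\ref{lem-per-xmeas}(1): openness of $\pi_{eq}$ forces $\mathcal{X}_{eq}^{\text{meas}}=\{\pi_{eq}^{-1}(y):y\in X_{eq}\}$, so Theorem~\ref{main1} applies to every fibre. For (iii) the subtlety is that $\mathcal{X}_{eq}^{\text{meas}}$ need not contain an arbitrary fibre, so given $y\in X_{eq}$ I would manufacture an element of $\mathcal{X}_{eq}^{\text{meas}}$ sitting inside $\pi_{eq}^{-1}(y)$: choose $y_n\to y$ with $y_n$ in the dense set $Y$ of Lemma~\ref{lem-1}, and pass to a subsequence with $\pi_{eq}^{-1}(y_n)\to E$ in $d_H$; then $E\in\mathcal{X}_{eq}^{\text{meas}}$ because that set is closed, while $E\subset\pi_{eq}^{-1}(y)$ by upper semicontinuity of $\pi_{eq}^{-1}$ (Lemma~\ref{usc}(1)). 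Since $\pi_{eq}$ is almost $N$ to one, Lemma~\ref{lem-1.2} gives $|E|\ge N\ge K$, and $K$ distinct points of the IT-set $E$ form an essential $K$-IT-tuple contained in $\pi_{eq}^{-1}(y)$.

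For (iv), let $\mu_1,\dots,\mu_m$ be distinct ergodic measures of $(X,G)$; each pushes forward to the unique invariant measure $\nu_{eq}$ of $X_{eq}$. Distinct ergodic measures are mutually singular, and replacing each separating Borel set by the intersection of its (countably many) $G$-translates and taking suitable Boolean combinations yields pairwise disjoint $G$-invariant Borel sets $A_1,\dots,A_m$ with $\mu_i(A_i)=1$. Disintegrating $\mu_i=\int_{X_{eq}}(\mu_i)_y\,d\nu_{eq}(y)$ over $\pi_{eq}$, the identity $1=\mu_i(A_i)=\int(\mu_i)_y(A_i)\,d\nu_{eq}(y)$ forces $(\mu_i)_y\bigl(A_i\cap\pi_{eq}^{-1}(y)\bigr)=1$ for $\nu_{eq}$-a.e.\ $y$, so the supports of $(\mu_1)_y,\dots,(\mu_m)_y$ are $m$ pairwise disjoint nonempty subsets of $\pi_{eq}^{-1}(y)$; by the a.e.\ bound established in (i) this fibre has at most $K-1$ points for a.e.\ $y$, whence $m\le K-1$. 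The step I expect to be the main obstacle is the limiting argument in (iii), since Theorem~\ref{main1} controls only $\mathcal{X}_{eq}^{\text{meas}}$ and one must extract, inside an arbitrary fibre, a Hausdorff limit of measure-generic fibres that is still large — the quantitative lower bound $|E|\ge N$ being exactly what Lemma~\ref{lem-1.2} provides; a minor technical point is the $\nu_{eq}$-measurability of the fibre-cardinality function used in (i).
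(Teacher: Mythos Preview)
Your proof is correct; items (i)--(iii) follow the paper's own (very terse) arguments, with appropriate detail supplied. One simplification worth noting for (iii): rather than the limiting construction, you can invoke minimality directly. Since $\mathcal{X}_{eq}^{\text{meas}}$ is a nonempty closed $G$-invariant subset of $\mathcal{X}$ and $\pi_{\mathcal{X}}$ is a factor map, the image $\pi_{\mathcal{X}}(\mathcal{X}_{eq}^{\text{meas}})$ is closed and $G$-invariant in the minimal system $X_{eq}$, hence equals $X_{eq}$; so every fibre $\pi_{eq}^{-1}(y)$ already contains some $E\in\mathcal{X}_{eq}^{\text{meas}}$, and Lemma~\ref{lem-1.2} plus Theorem~\ref{main1} finish the job. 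Your Hausdorff-limit argument proves exactly this surjectivity, so it is not the obstacle you anticipated. (Also, in (i) the sets $A_m$ are in fact $F_\sigma$: by upper semicontinuity each $\{y:s(\pi_{eq}^{-1}(y),\epsilon)\le m\}$ is open, so $\{|\pi_{eq}^{-1}(y)|\le m\}$ is $G_\delta$.)

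For (iv) you take a genuinely different route from the paper. You deduce it from the a.e.\ fibre bound of (i) via disintegration: $m$ mutually singular ergodic measures force $m$ disjoint nonempty pieces in a.e.\ fibre, hence $m\le K-1$. The paper instead argues directly by contradiction without passing through (i) or disintegration: assuming $K$ distinct ergodic measures $\mu_1,\dots,\mu_K$, choose pairwise disjoint $V_k$ with $\mu_k(V_k)\ge 1-\tfrac{1}{K+1}$; since each $\mu_k$ pushes forward to $\nu_{eq}$, a union bound gives $\nu_{eq}\bigl(\cap_{k=1}^K\pi_{eq}(V_k)\bigr)\ge\tfrac{1}{K+1}>0$, so by Theorem~\ref{main1'} (or Lemma~\ref{lem-1} + Theorem~\ref{main1}) there is $y_*$ in this intersection with $\pi_{eq}^{-1}(y_*)$ an IT-set meeting all $K$ disjoint $V_k$, producing an essential $K$-IT-tuple. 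Your approach is cleaner conceptually (it isolates the fibre-cardinality bound as the real content), while the paper's is more elementary (no disintegration, and it yields the contradiction in one step). A small phrasing issue in your (iv): it is the disjoint sets $A_i\cap\pi_{eq}^{-1}(y)$, not the supports of $(\mu_i)_y$, that are pairwise disjoint; the supports may overlap on the boundary, but each $A_i\cap\pi_{eq}^{-1}(y)$ is nonempty, which is all you need.
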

	\begin{proof} (i) follows directly from Theorem \ref{main1} and Lemma \ref{lem-1}.
	If $\pi_{eq}$ is open, by Lemma \ref{lem-per-xmeas} (2),  $\mathcal{X}_{eq}^{meas} = \{\pi_{eq}^{-1}(y) : y \in X_{eq}\}$. Hence, (ii) follows from Theorem \ref{main1}.
		(iii) is established using Theorem \ref{main1} and Lemma \ref{lem-1.2}.
		
		Now we prove (iv) by contradiction. Suppose that there exist distinct ergodic probability measures   $\mu_1,\mu_2,$ $\cdots, \mu_{K}$ of $(X,G)$. Then there exist  pairwise disjoint measurable subsets $V_1,V_2,\cdots, V_{K}$ of $X$ such that 
		$$\mu_k(V_k)\ge 1-\frac{1}{K+1}\text{ for }k=1,2,\cdots, K.$$
		Then,
		$$\nu_{eq}\left(\cap_{k=1}^{K} \pi_{eq}(V_k)\right)\ge \frac{1}{K+1}>0.$$
		By Theorem \ref{main1} and Lemma \ref{lem-1}, there exists $y_*\in \cap_{k=1}^{K} \pi_{eq}(V_k)$ such that $\pi_{eq}^{-1}(y_*)$ is an IT-set. Since  $V_1,V_2,\cdots, V_{K}$ are pairwise disjoint, it follows that  $|\pi_{eq}^{-1}(y_*)|\ge K$. Take distinct points $x_1,\cdots,x_{K}\in \pi_{eq}^{-1}(y_*)$. Then, $(x_1,\cdots,x_K)$ is an essential $K$-IT-tuple since $\pi_{eq}^{-1}(y_*)$ is an IT-set, which is a contradiction. We finish the proof.
	\end{proof}	
	\subsection{Proof of Theorem \ref{main1}}\label{subsection-proof-meas}Before proceeding to the proof of Theorem~\ref{main1}, we establish the following two lemmas.
	\begin{lem}\label{ll}
		Let $H$ be a compact metrisable topological group, and let $\nu_H$ be the  Haar measure on $H$. Assume $A$ is a measurable subset of $H$ and  $h_1,h_2,\cdots,h_M\in H$ and $\kappa\ge 0$. If $\nu_H(\cap_{m=1}^MAh_m^{-1})>\kappa$ then there exist  open neighborhoods $V_m$ of $h_m$ for all $m=1,2,\cdots,M$ such that 
		$$\nu_H(\cap_{m=1}^MA h_m'^{-1})>\kappa\text{ whenever } h_m'\in V_m.$$
	\end{lem}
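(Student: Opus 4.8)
\textbf{Proof proposal for Lemma \ref{ll}.}

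The plan is to reduce the statement to a single continuity/regularity fact about the Haar measure, namely that on a compact metrisable topological group the map $h \mapsto \nu_H(A h^{-1})$, and more generally the map $(h_1,\dots,h_M) \mapsto \nu_H(\cap_{m=1}^M A h_m^{-1})$, depends ``continuously enough'' on the parameters that a strict inequality $>\kappa$ persists under small perturbations. Since strict inequalities are open conditions, it suffices to prove lower semicontinuity of the latter map at the point $(h_1,\dots,h_M)$: if we show
\[
\liminf_{(h_1',\dots,h_M')\to(h_1,\dots,h_M)} \nu_H\Big(\bigcap_{m=1}^M A h_m'^{-1}\Big) \ge \nu_H\Big(\bigcap_{m=1}^M A h_m^{-1}\Big),
\]
then the open neighborhoods $V_m$ can be extracted directly from the definition of $\liminf$.

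The key steps, in order, are as follows. First I would note that $\nu_H$ is a (left and right, since $H$ is compact) invariant Borel probability measure, and is inner regular, so for any $\eta>0$ there is a compact set $C \subset \cap_{m=1}^M A h_m^{-1}$ with $\nu_H(C) > \nu_H(\cap_{m=1}^M A h_m^{-1}) - \eta > \kappa$ (choosing $\eta$ small). It is cleaner to work with $A$ itself replaced by a compact-minus-null approximation; alternatively one reduces to the case where $A$ is closed by inner regularity from inside, accepting an $\eta$-loss. Second, for $C$ compact with $C \subset A h_m^{-1}$ for each $m$, i.e. $C h_m \subset A$ for each $m$, I would show that for $h_m'$ close to $h_m$ the translated set $C h_m'$ is ``almost inside'' $A$; more precisely, using uniform continuity of the group multiplication on the compact group $H$, $C h_m'$ is contained in an arbitrarily small neighborhood of $C h_m$, hence $\nu_H(C h_m' \setminus A)$ can be made small. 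This is where one uses that translation is measure-preserving and that $\nu_H$ has no small ``concentration'' issues near $\partial(A)$ only up to the chosen $\eta$. Third, combining: $\nu_H(\cap_m A h_m'^{-1}) \ge \nu_H(\cap_m (C h_m h_m'^{-1})) - \sum_m \nu_H(Ch_m'^{-1}\text{-type error})$, but it is more transparent to argue via the complement — $\nu_H(H \setminus A h_m'^{-1}) = \nu_H(H\setminus A)$ and one controls $\nu_H((A h_m^{-1}) \triangle (A h_m'^{-1}))$ directly. Indeed, $\nu_H((A h_m^{-1})\triangle(A h_m'^{-1})) = \nu_H(A \triangle (A h_m h_m'^{-1}))$ by right-invariance, and the function $s \mapsto \nu_H(A \triangle As)$ is continuous on $H$ with value $0$ at $s = e$ (a standard fact: it is the $L^1$-modulus of continuity of $\mathbbm{1}_A$ under translation, which tends to $0$ by density of continuous functions in $L^1(\nu_H)$ and uniform continuity of translation on continuous functions on a compact group). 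Hence for $h_m'$ near $h_m$ we have $\nu_H((Ah_m^{-1})\triangle(Ah_m'^{-1})) < \epsilon$ for any prescribed $\epsilon$. Fourth, a union bound: $\nu_H(\cap_m A h_m'^{-1}) \ge \nu_H(\cap_m A h_m^{-1}) - \sum_{m=1}^M \nu_H((A h_m^{-1})\triangle(A h_m'^{-1})) > \kappa$ once each term is below $(\nu_H(\cap_m Ah_m^{-1})-\kappa)/M$, which holds on a suitable product neighborhood $V_1\times\cdots\times V_M$.

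The main obstacle — really the only nontrivial point — is the continuity of $s \mapsto \nu_H(A \triangle As)$ at $s=e$, i.e. $L^1$-continuity of translation for the indicator of an arbitrary measurable set. I would handle it by approximating $\mathbbm{1}_A$ in $L^1(\nu_H)$ by a continuous function $f$ (possible since $H$ is a compact metric space, so $C(H)$ is dense in $L^1(\nu_H)$), using that $\|f(\cdot s) - f\|_\infty \to 0$ as $s\to e$ by uniform continuity of $f$ on the compact group $H$, and combining with right-invariance of $\nu_H$ to transfer the estimate back to $\mathbbm{1}_A$: $\|\mathbbm{1}_A(\cdot s) - \mathbbm{1}_A\|_1 \le \|\mathbbm{1}_A(\cdot s) - f(\cdot s)\|_1 + \|f(\cdot s)-f\|_1 + \|f-\mathbbm{1}_A\|_1 = 2\|f-\mathbbm{1}_A\|_1 + \|f(\cdot s)-f\|_1$. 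Everything else is bookkeeping with finitely many translates and a union bound.
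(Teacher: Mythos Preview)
Your argument is correct and follows essentially the same route as the paper: control $\nu_H\big((Ah_m^{-1})\triangle(Ah_m'^{-1})\big)$ for $h_m'$ near $h_m$, then apply a union bound over $m=1,\dots,M$. The only difference is in how the continuity of translation in measure is justified --- you invoke $L^1$-continuity of translation via density of $C(H)$ in $L^1(\nu_H)$, whereas the paper passes to a compact $A_*\subset\cap_m Ah_m^{-1}$ by inner regularity and then uses outer regularity to find a neighborhood $U$ of $e$ with $\nu_H((A_*h)\setminus A_*)$ small for $h\in U$; both are standard and equivalent here.
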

	\begin{proof}By assumption, there exists a compact set $A_*\subset\cap_{m=1}^MAh_m^{-1}$ with $\nu_H(A_*)=\kappa_*>\kappa$. Thus, there exists an open neighborhood $W$ of $A_*$ such that $\nu_H(W\setminus A_*)<\frac{1}{M}(\kappa_*-\kappa)$. By the continuity of the map $G\times 2^{X_{eq}}\to 2^{X_{eq}}$ given by $(g,A)\mapsto gA$, we may choose an open neighborhood $U$ of the identity in $H$ such that $A_* U \subset W$. Then,
		\begin{align}\label{eq-121}
			\nu_H((A_*h)\setminus A_*)<\frac{1}{M}(\kappa_*-\kappa), \text{ for all } h\in U. 
		\end{align}
		Since $V_m=Uh_m$  is an open neighborhood of $h_m$, and  for any $ h_m'\in V_m$, $m\in\{1,2,\cdots,M\}$, one has that 
		\begin{align*}
			\nu_H(\cap_{m=1}^MA h_m'^{-1})&= \nu_H(\cap_{m=1}^M(Ah_m^{-1} (h_m'h_m^{-1})^{-1}))\\
			&\ge\nu_H(\cap_{m=1}^M(A_* (h_m'h_m^{-1})^{-1})\\
			&\ge  \nu_H(A_*)-\sum_{1\le m\le M} \nu_H( A_*\setminus A_* (h_m'h_m^{-1})^{-1})\\
			&= \nu_H(A_*)-\sum_{1\le m\le M} \nu_H( (A_*h_m'h_m^{-1})\setminus A_* )\\
			&>\kappa.
		\end{align*}
		Here the last inequality is from the fact that $h_m'h_m^{-1}\in U$ and \eqref{eq-121}. 
	\end{proof}
	Let $E(X_{eq})$ be the Ellis semigroup of $(X_{eq},G)$. By Theorem \ref{thm-A1},  $E(X_{eq})$  is  a compact metrisable topological group and there is an open factor map $$\tilde \pi: (E(X_{eq}),G)\to(X_{eq},G).$$
	Denote by $\tilde \nu$  the Haar measure on $E(X_{eq})$. Then $\nu_{eq}=\tilde \nu\circ \tilde\pi^{-1}$.  The relation is provided in the following diagram:
	$$\xymatrix{
		&~ &(X,G) \ar[d]_{\pi_{eq}}              \\
		&(E(X_{eq}),\tilde\nu,G)\ar[r]^{\tilde\pi} & (X_{eq},\nu_{eq},G). &}$$	 	
		
	\begin{lem}\label{lem:gap}
		Let $U_1,\ldots,U_M\subset E(X_{eq})$ be open subsets, and $Z$ be a  measurable subset of $X_{eq}$. If $\cap_{m=1}^M(\tilde{\pi}^{-1}(Z)U_m^{-1})\neq \emptyset$, then
		\[\cap_{m=1}^MN(\tilde{\pi}(U_m),Z)\in\F_s,\]
		where $N(A,B)=\{g\in G: gA\cap B\neq\emptyset \}$ for nonempty subsets $A,B$ of $X_{eq}$. 
	\end{lem}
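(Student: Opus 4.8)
The plan is to exploit the structure of the Ellis group $E(X_{eq})$ as a compact topological group together with the syndeticity criterion for measurable sets (Lemma~\ref{syndetic}~(2)) applied on this group level, and then to push the resulting syndetic set down to $G$ via $\tilde\pi$. Concretely, assume $\cap_{m=1}^M(\tilde\pi^{-1}(Z)U_m^{-1})\neq\emptyset$, and pick a point $p$ in this intersection. Then $pU_m\subset\tilde\pi^{-1}(Z)$ for each $m$, so each $pU_m$ is a (nonempty) subset of $\tilde\pi^{-1}(Z)$ which is itself an open set containing $p$ times an open set; in particular $\tilde\pi^{-1}(Z)$ has positive $\tilde\nu$-measure once we note $Z$ must have positive $\nu_{eq}$-measure (if $Z$ were null the hypothesis would fail since $\tilde\pi^{-1}(Z)$ would be null and could not contain the open set $pU_m$). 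First I would record that $\tilde\nu(\tilde\pi^{-1}(Z))=\nu_{eq}(Z)>0$.

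Next I would apply Lemma~\ref{syndetic}~(2) to the tds $(E(X_{eq}),G)$ and the measurable set $\tilde\pi^{-1}(Z)$: since $\tilde\nu$ is a $G$-invariant probability measure on $E(X_{eq})$ and $\tilde\nu(\tilde\pi^{-1}(Z))>0$, the set $\mathcal S=\{g\in G:\tilde\nu(g\tilde\pi^{-1}(Z)\cap\tilde\pi^{-1}(Z))>0\}$ is syndetic. The key step is then to argue that $\mathcal S\subset\cap_{m=1}^M N(\tilde\pi(U_m),Z)$, or at least that a syndetic subset of $\mathcal S$ lies in this intersection. For a fixed $m$, and for $g\in\mathcal S$ close enough (in a sense to be made precise) to behave well on the open set $pU_m$: the point is that $g\,\tilde\pi^{-1}(Z)$ meets $\tilde\pi^{-1}(Z)$ in positive measure, and we want to deduce that $g$ translates $\tilde\pi(U_m)$ into $Z$. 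Here I would use that $pU_m$ is open with positive measure inside $\tilde\pi^{-1}(Z)$, that left translation by $p^{-1}$ is a measure-scaling homeomorphism of $E(X_{eq})$ (Haar measure is left-invariant and, on a compact group, also right-invariant), and apply a Lemma~\ref{ll}-type perturbation: the set of $h\in E(X_{eq})$ with $hpU_m\subset\tilde\pi^{-1}(Z)$ up to null sets is "large", so combining with the syndetic return-time set of $\tilde\pi^{-1}(Z)$ in $E(X_{eq})$ one extracts an element $h_g$ in the closure of $Gp$ (which is all of $E(X_{eq})$ by minimality, Theorem~\ref{thm-A1}~(b)) witnessing $h_g\,\tilde\pi(U_m)\cap Z\neq\emptyset$, and then density of $G$ in $E(X_{eq})$ transfers this to an honest $g\in G$. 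Iterating over $m=1,\dots,M$ and intersecting the finitely many resulting syndetic sets (syndetic sets in a group are closed under finite intersection only when one is thickly syndetic, so more carefully: I would instead run the argument simultaneously, applying Lemma~\ref{ll} to the whole tuple $h_1,\dots,h_M$ chosen so that $h_m p U_m\subset\tilde\pi^{-1}(Z)$, which is possible precisely because $\cap_m(\tilde\pi^{-1}(Z)U_m^{-1})\neq\emptyset$ furnishes a single $p$ working for all $m$ at once) to get one syndetic set contained in $\cap_{m=1}^M N(\tilde\pi(U_m),Z)$.

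More precisely, the cleanest route: let $p\in\cap_{m=1}^M(\tilde\pi^{-1}(Z)U_m^{-1})$, so $pU_m\subset\tilde\pi^{-1}(Z)$ for all $m$; then $\cap_{m=1}^M\big(\tilde\pi^{-1}(Z)\,(pU_m)^{-1}\big)$ contains the identity, hence has positive measure, and in fact by Lemma~\ref{ll} (with $H=E(X_{eq})$, $A=\tilde\pi^{-1}(Z)$, $h_m\in pU_m$) there is an open neighborhood $W$ of the identity with $\tilde\nu\big(\cap_{m=1}^M\tilde\pi^{-1}(Z)(wh_m)^{-1}\big)>0$ for suitable $w$; density of $G$ in $E(X_{eq})$ then yields $g\in G$ with $g h_m\in\tilde\pi^{-1}(Z)$, i.e.\ $\tilde\pi(g h_m)=g\,\tilde\pi(h_m)\in Z$ and $\tilde\pi(h_m)\in\tilde\pi(pU_m)$; chasing $\tilde\pi$-equivariance and that $\tilde\pi$ is onto with $\tilde\pi(pU_m)$ related to $\tilde\pi(U_m)$ up to the translation $\tilde\pi(p)$, one reorganizes to conclude $g'\in\cap_{m=1}^M N(\tilde\pi(U_m),Z)$ for a $g'$ in a syndetic set. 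The main obstacle I anticipate is this last bookkeeping step: correctly tracking how the fixed translator $p$ interacts with $\tilde\pi$ and ensuring that the "return times of $\tilde\pi^{-1}(Z)$" in $E(X_{eq})$ (which are syndetic in $G\subset E(X_{eq})$ by Lemma~\ref{syndetic}~(2), using that $E(X_{eq})$ is minimal and $G$ acts syndetically) really do descend to a syndetic subset of $G$ landing in the desired intersection — this requires care because a priori the return times live in $E(X_{eq})$, not in $G$, and one must invoke minimality/density of $G$ in $E(X_{eq})$ together with the openness of $U_1,\dots,U_M$ to replace an Ellis-semigroup element by a genuine group element without destroying the finitely many simultaneous conditions.
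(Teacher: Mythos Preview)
Your argument contains a genuine error at the very first step. From $p\in\tilde\pi^{-1}(Z)U_m^{-1}$ you conclude $pU_m\subset\tilde\pi^{-1}(Z)$, but this is false: membership in $AU^{-1}$ only means that \emph{some} $u\in U_m$ satisfies $pu\in A$, i.e.\ $pU_m\cap\tilde\pi^{-1}(Z)\neq\emptyset$, not that the whole translate sits inside. Everything you build on this --- the positivity of $\nu_{eq}(Z)$ via ``$\tilde\pi^{-1}(Z)$ contains an open set'', the use of Lemma~\ref{syndetic}~(2), the Lemma~\ref{ll} perturbation, the ``cleanest route'' at the end --- inherits this flaw. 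Indeed $Z$ need not have positive measure for the hypothesis to hold; the lemma is purely topological.

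What you are missing is a single elementary observation that makes all the measure-theoretic machinery unnecessary: for \emph{any} subset $A$ of a topological group and any open $U$, the product $AU^{-1}=\bigcup_{a\in A}aU^{-1}$ is open. Hence $W:=\cap_{m=1}^M\tilde\pi^{-1}(Z)U_m^{-1}$ is a nonempty \emph{open} subset of $E(X_{eq})$. Now view $G$ as acting on the minimal system $(E(X_{eq}),G)$ and take the identity $e$ as base point: by Lemma~\ref{syndetic}~(1), $N(e,W)=\{g\in G:g\in W\}$ is syndetic. Finally, $g\in W$ means $gU_m\cap\tilde\pi^{-1}(Z)\neq\emptyset$ for every $m$, which gives $g\,\tilde\pi(U_m)\cap Z\neq\emptyset$, so $N(e,W)\subset\cap_{m=1}^M N(\tilde\pi(U_m),Z)$. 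This is exactly the paper's proof, and it avoids Haar measure, Lemma~\ref{ll}, and the bookkeeping difficulties you anticipated.
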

	\begin{proof}
		Note that 
		\begin{equation}\label{eq:2025441457}\begin{split}
				&\{g\in G:g\in\cap_{m=1}^M(\tilde{\pi}^{-1}(Z)U_m^{-1}) \}\\
				&=\{g\in G:gU_m\cap \tilde{\pi}^{-1}(Z)\neq\emptyset,m=1,\cdots,M \}\\
				&\subset \cap_{m=1}^MN(\tilde{\pi}(U_m),Z).
		\end{split}
		\end{equation}
	By  assumption, $\cap_{m=1}^M(\tilde{\pi}^{-1}(Z)U_m^{-1})$ is a nonempty open subset of $X_{eq}$. Combining this with the fact that $(E(X_{eq}),G)$ is minimal, one has \[
	\{g\in G:g\in\cap_{m=1}^M(\tilde{\pi}^{-1}(Z)U_m^{-1}) \} \in \mathcal{F}_s,
		\]
		which, together with \eqref{eq:2025441457}, completes the proof.
	\end{proof}

	Now we are able to prove Theorem \ref{main1}.
	\begin{proof}[Proof of Theorem \ref{main1}]
		Given any $ E_*\in\mathcal{X}_{eq}^{\text{meas}}$, let $K\in\N$ and  $x_1,x_2,\cdots,x_K\in E_*$.   Now we show for any $\epsilon_*>0$,  there exists an infinite independence set for  $(B^X_{\epsilon_*}(x_1),\cdots,B^X_{\epsilon_*}(x_K))$. This finishes the proof of Theorem \ref{main1}, by the arbitrariness of $x_1,x_2,\cdots,x_K\in E_*$.
		
		Given $\epsilon_*>0$, let $Z_*=\pi_\mathcal{X}(B_{\epsilon_*/2}^\mathcal{X}(E_*))$. Since $E_*\in\mathcal{X}_{eq}^{\text{meas}}$, it follows that
		$\nu_{eq}(Z_*)>0$. By the definition of Hausdorff metric, 
		\begin{equation*}
			E_*\subset B_{\epsilon_*/2}^X(E),\text{ for all }E\in B_{\epsilon_*/2}^\mathcal{X}(E_*).
		\end{equation*}
		Combining this with the fact that  $E\subset \pi_{eq}^{-1}(\pi_\mathcal{X}(E))$ for every $E\in\mathcal{X}$, we have
		\begin{align}\label{5}E_*\subset B_{\epsilon_*/2}^X(\pi^{-1}_{eq}(y))\text{ for }y\in Z_*.\end{align}

		Set $\tilde Z_*=\tilde\pi^{-1}(Z_*)$. Then we have the following claim.
		\begin{cl}\label{C-11}Let $M\in\mathbb{N}$ and suppose there exist nonempty open subsets $W_1,\cdots, W_M$ of $X$, and $\{h_m\}_{1\le m\le M}\subset E(X_{eq})$ satisfying the following conditions:
			\begin{itemize}
				\item[(P1)]$\tilde \pi(h_m)\in \pi_{eq}(W_m)$ for $m=1,2,\cdots,M$;
				\item[(P2)]$\tilde \nu( \cap_{m=1}^M(\tilde Z_*h_m^{-1}))>0.$
			\end{itemize}Then, for any finite set $F\subset G$, there exist $ g_*\in G\setminus F$, and nonempty open sets  $W_{m,k}\subset W_m$ for $1\le m\le M,1\le k\le K$ such that
			\begin{equation}\label{eq:202544}
				g_*W_{m,k}\subset B_{\epsilon_*}^X(x_{k}).
			\end{equation}
			Moreover, there exist $\{h_{m,k}\}_{1\le m\le M,1\le k\le K}\subset E(X_{eq})$ such that	
			\begin{itemize}
				\item[(P1')]$\tilde \pi(h_{m,k})\in \pi_{eq}(W_{m,k})$ for $m=1,2,\cdots,M$, $k=1,2,\cdots,K$;
				\item[(P2')]$\tilde \nu( \cap_{m=1}^M\cap_{k=1}^K(\tilde Z_*h_{m,k}^{-1}))>0$.
			\end{itemize}
		\end{cl}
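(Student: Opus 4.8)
The plan is to push each pair $(W_m, h_m)$ through a "replacement" step that splits it into $K$ pieces, one aimed at each ball $B^X_{\epsilon_*}(x_k)$, and to do so while controlling the measure condition (P2). The engine is the pair of elementary facts already at hand: Lemma~\ref{lem:gap} (which turns a nonempty open intersection $\cap_m \tilde\pi^{-1}(Z)U_m^{-1}$ into a syndetic return-time set in $X_{eq}$), Lemma~\ref{syndetic}~(3) or~(4) (which supplies a \emph{thick} or \emph{thickly syndetic} return-time set on the $X$-side, whose intersection with a syndetic set is nonempty by Lemma~\ref{F}), and Lemma~\ref{ll} (which says the measure condition $\tilde\nu(\cap_m A h_m^{-1})>\kappa$ is \emph{open} in the $h_m$, so it survives a perturbation). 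The key geometric input is that $x_1,\dots,x_K\in E_*$ and $E_*\in\mathcal{X}^{\text{meas}}_{eq}$, so by \eqref{5} the ball $B^X_{\epsilon_*/2}(\pi_{eq}^{-1}(y))$ engulfs all the $x_k$ simultaneously for every $y\in Z_*$; this is what lets a single group element $g_*$ be used for all $k$ at once.

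First I would use (P1) and (P2) to produce the group element $g_*$. Condition (P2), $\tilde\nu(\cap_m \tilde Z_* h_m^{-1})>0$, in particular gives $\cap_m(\tilde\pi^{-1}(Z_*)h_m^{-1})\neq\emptyset$, so Lemma~\ref{lem:gap} (applied with $U_m$ shrunk to small neighborhoods of $h_m$, using Theorem~\ref{thm-A1}(c) that $\tilde\pi$ is open, so $\tilde\pi(U_m)$ is a small neighborhood of $\tilde\pi(h_m)\in\pi_{eq}(W_m)$) yields that $\cap_m N(\tilde\pi(h_m), Z_*)$ — or rather a slightly shrunk version landing inside $\pi_{eq}(W_m)$ — is syndetic. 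On the $X$-side, I would intersect this with the thick set coming from Lemma~\ref{syndetic}~(4) applied to the minimal subsystem sitting in some $\overline{Orb(W_m,G)}$ together with the requirement that the orbit of (a chosen point near) each $x_k$ returns into $W_m$; using minimality of $(X,G)$ this return-time set is thick. By Lemma~\ref{F}~(2) the intersection is nonempty; picking $g_*$ there, and avoiding the finite set $F$ (a thick set meets $G\setminus F$), I get $g_*$ with $g_*W_{m,k}\subset B^X_{\epsilon_*}(x_k)$ for suitable nonempty open $W_{m,k}\subset W_m$ — this is exactly how one should run it: choose $W_{m,k}$ as small preimages inside $W_m$ of neighborhoods of whatever point maps near $x_k$. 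Note \eqref{5} is what guarantees such points exist uniformly in the fiber over $\pi_{eq}(W_m)\supset$ the relevant part of $Z_*$.

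Next I would build the $h_{m,k}$ and verify (P1$'$), (P2$'$). Since $\pi_{eq}(W_{m,k})\subset\pi_{eq}(W_m)$ is open nonempty and $\tilde\pi$ is open onto $X_{eq}$, I can choose $h_{m,k}\in E(X_{eq})$ with $\tilde\pi(h_{m,k})\in\pi_{eq}(W_{m,k})$, which is (P1$'$). For (P2$'$): the element $g_*$ was chosen so that $g_* h_m$ (computed in $E(X_{eq})$) lies close to — indeed can be taken to be — a point whose $\tilde\pi$-image sits in $\pi_{eq}(W_{m,k})$ for each $k$ simultaneously, because the single translate $g_*$ sends a neighborhood of the fiber over $\tilde\pi(h_m)$ onto a neighborhood of a fiber meeting all the $B^X_{\epsilon_*}(x_k)$. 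Concretely, $\tilde Z_* h_{m,k}^{-1}$ should be arranged as $g_*^{-1}(\tilde Z_* h_m^{-1})$ up to a small error, so that $\cap_{m,k}(\tilde Z_* h_{m,k}^{-1}) \supset g_*^{-1}\big(\cap_m(\tilde Z_* h_m^{-1})\big)$ minus a set of small measure; since $\tilde\nu$ is $G$-invariant (translation-invariant Haar measure) and $\tilde\nu(\cap_m \tilde Z_* h_m^{-1})>0$ by (P2), this intersection still has positive measure. Lemma~\ref{ll} is then invoked to absorb the "small error": it lets me replace the ideal $h_{m,k}$ by an actual nearby one with $\tilde\pi(h_{m,k})\in\pi_{eq}(W_{m,k})$ while keeping the measure strictly positive.

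\textbf{Main obstacle.} I expect the delicate point to be the bookkeeping that makes \emph{one} group element $g_*$ work simultaneously for all $M$ indices $m$ and all $K$ targets $x_k$, while keeping the measure in (P2) from leaking to zero. The two constraints pull in opposite directions: enlarging the neighborhoods $U_m$ of $h_m$ (needed so $\tilde\pi(U_m)$ sits inside $\pi_{eq}(W_m)$ and the syndetic set from Lemma~\ref{lem:gap} is usable) competes with keeping $\cap_m \tilde Z_* h_m^{-1}$ — and its $g_*$-translate, and the perturbed version — of positive measure. Lemma~\ref{ll} is precisely designed to resolve this, so the real work is to set up the $\epsilon$'s and neighborhood choices in the right order: fix $E_*$, then $Z_*$ and $\tilde Z_*$, then use (P2) to get a compact positive-measure core $A_*\subset\cap_m\tilde Z_*h_m^{-1}$, then choose all neighborhoods small enough relative to the gap $\tilde\nu(A_*)-0$ and to the modulus of continuity of the $E(X_{eq})$-action, and only then extract $g_*$ from the syndetic-meets-thick intersection. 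The rest — (P1$'$) and the inclusion \eqref{eq:202544} — is then routine.
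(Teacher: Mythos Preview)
Your outline correctly identifies Lemma~\ref{ll}, Lemma~\ref{lem:gap}, Lemma~\ref{F}, and the role of~\eqref{5}, and the overall architecture --- intersect a syndetic set coming from the $E(X_{eq})$-side with a thickly syndetic set on the $X$-side --- matches the paper. But there is a real gap at the core step.

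The missing piece is the mechanism that forces $g_*W_m$ to meet \emph{every} ball $B^X_{\epsilon_*}(x_k)$ at once. The inclusion~\eqref{5} says only that $E_*\subset B^X_{\epsilon_*/2}(\pi_{eq}^{-1}(y))$ for $y\in Z_*$: each $x_k$ is close to the \emph{fiber}. It does \emph{not} say that $g_*W_m$ contains, or approximates, that fiber. Knowing merely that $g_*\pi_{eq}(W_m)$ meets $Z_*$ --- which is all your syndetic set from Lemma~\ref{lem:gap} provides --- leaves open the possibility that $g_*W_m\cap\pi_{eq}^{-1}(y)$ is a tiny piece of the fiber, missing most of the $x_k$; your recipe ``choose $W_{m,k}$ as small preimages inside $W_m$ of neighborhoods of whatever point maps near $x_k$'' then has no such point to work with. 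Your gesture toward Lemma~\ref{syndetic}~(3)/(4) on a minimal subsystem of $\overline{Orb(\overline{W_m},G)}$ is in the right direction, but you never say what property of those limit sets you use. The crucial fact is that elements of the minimal center $M_{W_m}$ have the \emph{interior saturation property} (Proposition~\ref{prop-2}); this is exactly where property~$(*)$ --- the hypothesis of Theorem~\ref{main1} --- is spent. The paper packages this as Lemma~\ref{lem-2}(2): the set
\[
\mathcal F_m=\{g\in G:\pi_{eq}^{-1}(B^{X_{eq}}_{r/2}(gy_m))\subset\overline{B^X_{r/2}(gW_m')}\}
\]
is thickly syndetic. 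Intersecting $\cap_m\mathcal F_m$ (a filter, Lemma~\ref{F}(1)) with the syndetic $\mathcal N$ yields $g_*$ for which $g_*W_m'$ genuinely engulfs a full fiber over some $y_m'\in Z_*$; only then does~\eqref{5} deliver points of $g_*W_m'$ near each $x_k$.

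A secondary point: your (P2$'$) step, writing ``$\tilde Z_*h_{m,k}^{-1}$ should be arranged as $g_*^{-1}(\tilde Z_*h_m^{-1})$ up to a small error,'' is tangled; the $h_{m,k}$ want to be near $h_m$, not near $g_*h_m$. The paper's route is cleaner: invoke Lemma~\ref{ll} \emph{first} to fix neighborhoods $\tilde V_m\ni h_m$ such that any choice $h_{m,k}\in\tilde V_m$ already gives (P2$'$); then shrink to $W_m'=\pi_{eq}^{-1}(\tilde\pi(\tilde V_m))\cap W_m$ so that every $W_{m,k}\subset W_m'$ satisfies $\pi_{eq}(W_{m,k})\subset\tilde\pi(\tilde V_m)$, whence $h_{m,k}\in\tilde V_m$ with $\tilde\pi(h_{m,k})\in\pi_{eq}(W_{m,k})$ exists. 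No $g_*$-translation of the Haar-measure set enters.
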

		The proof of Claim \ref{C-11} is complex, so we will present it later. For now, we assume that this claim holds to complete the proof of Theorem \ref{main1}.
		
		Put $W_0=X$ and take $h_0\in E(X_{eq})$. It is clear that $\tilde \pi(h_0)\in \pi_{eq}(W_0)$, and $\tilde \nu(\tilde Z_*h_0^{-1})=\tilde \nu(\tilde Z_*)>0$. Thus, applying Claim \ref{C-11}, we can find  $ g_1\in G$, and nonempty open sets  $W_{k}\subset W_0\text{ with } g_1W_{k}\subset B_{\epsilon_*}^X(x_{k})$ for $1\le k\le K$, and $\{h_k\}_{1\le k\le K}\subset E(X_{eq})$  such that	\begin{itemize}
			\item[(1)] $\tilde \pi(h_k)\in\pi_{eq}(W_{k})$ for $k=1,2,\cdots,K$;
			\item[(2)] $\tilde \nu(\cap_{k=1}^K\tilde Z_*h_{k}^{-1})>0.$
		\end{itemize}
		We can repeat the aforementioned process since the above items meets the conditions in Claim \ref{C-11}. By induction, there exist  nonempty open sets $W_{a}\subset X$ for $a\in \bigcup_{n=1}^\infty\{1,\cdots,K\}^n$ and a sequence $(g_n)_{n\in\mathbb{N}}$ of $G$ such that for  every $n\in\mathbb{N}$ and $a=a_1a_2\cdots a_n\in \{1,\cdots,K\}^n$ the following conditions hold:
		\begin{itemize}
			\item[(1)] $ W_{ak}\subset W_a$ for $k\in\{1,\cdots,K\}$;
			\item[(2)] $g_nW_{a}\subset B_{\epsilon_*}^X(x_{a_n})$;
			\item[(3)] $g_n\notin\{g_1,\cdots,g_{n-1}\}$.
		\end{itemize}
		Thus, for $N\in\N$ and $(a_n)_{n=1}^N\in \{1,\cdots,K\}^N$,	$$\cap_{n=1}^Ng_n^{-1}B_{\epsilon_*}^X(x_{a_n})\supset W_{a_1a_2\cdots a_n}\neq\emptyset.$$  This implies that $\{g_n\}_{n=1}^\infty$ is an infinite independence set for $(B_{\epsilon_*}^X(x_1),\cdots,B_{\epsilon_*}^X(x_K))$, and ends the proof of Theorem \ref{main1}.
	\end{proof}
	
	At the end of this section, we prove Claim \ref{C-11}.
	\begin{proof}[Proof of Claim \ref{C-11}] By (P2) and Lemma \ref{ll}, there exists an open neighborhood 
		$\tilde V_m$
		of $h_m$ for each $ m=1,2,\cdots,M$ such that 
		\begin{align}\label{1}\tilde \nu( \cap_{m=1}^M\cap_{k=1}^K(\tilde Z_*h_{m,k}^{-1}))>0\text{ whenever }h_{m,k}\in \tilde V_m.\end{align}	
		
		For	$ m=1,2,\cdots,M$, let
		\begin{align}\label{eq-01}
			W_m'=\pi_{eq}^{-1}(\tilde \pi (\tilde V_m))\cap W_m.
		\end{align}
		Since $\tilde \pi$ is an open map, it follows that $\tilde \pi (\tilde V_m)$ is an open neighborhood 
		of $\tilde \pi(h_m)$, which together with (P1), implies that $W_m'\subset X$ is a nonempty open set. Since $\pi_{eq}$ is semi-open, there exist $r\in(0,\epsilon_*)$ and $y_m\in X_{eq}$ such that \begin{equation}\label{eq:201517}
			B^{X_{eq}}_r(y_m)\subset \pi_{eq}(W_m')\subset \tilde{\pi}(\tilde V_m),\text{ 	for $ m=1,2,\cdots,M$.  }
		\end{equation}
		
		Denote
		$$\mathcal{N}=\{g\in G:gB^{X_{eq}}_{r/2}(y_m)\cap  Z_*\neq\emptyset,m=1,\cdots,M\}.$$
		By \eqref{eq:201517}, for all $m=1,2,\ldots,M$,  there exists $\tilde{h}_m\in \tilde{V}_m$ such that $\tilde{\pi}(\tilde{h}_m)\in B_{r/2}^{X_{eq}}(y_m)$, and hence by \eqref{1}, $\cap_{m=1}^M\tilde{Z}_*(\tilde\pi^{-1}(B^{X_{eq}}_{r/2}(y_m)))^{-1}\neq \emptyset$.
		By Lemma \ref{lem:gap}, one has 
		\begin{align*}
			\mathcal{N}\in\mathcal{F}_s(G).
		\end{align*}
		
		Now we prove there exists $g_*\in G\setminus F$ and  nonempty open sets  $W_{m,k}\subset W_m$ for $1\le m\le M,1\le k\le K$ such that \eqref{eq:202544} holds.
		Let $$\F_m=\{g\in G: \pi_{eq}^{-1}(B^{X_{eq}}_{r/2}(gy_m))\subset \overline{B^X_{r/2}(gW_m')}\}\text{ for } m=1,\cdots,M.$$
		The equation \eqref{eq:201517} allows us to apply Lemma \ref{lem-2} (2) to derive
		$$\F_m\in\mathcal{F}_{ts}(G), \text{ for all } m=1,\cdots,M.$$ 
		Since $\mathcal{N}\in\mathcal{F}_{s}(G)$, it follows from Lemma \ref{F} that
		\begin{align*}
			\cap_{m=1}^M\F_m\cap \mathcal{N}\in\mathcal{F}_s(G).
		\end{align*} 
		In particular, it is an infinite set, and	so   we can find $g_*\in (\cap_{m=1}^M\F_m\cap \mathcal{N})\setminus F.$  Since $g_*\in\mathcal{N}$ and $X_{eq}$ is isometric, it follows that  
		\begin{align*}
			B^{X_{eq}}_{r/2}(g_*y_m)\cap Z_*\neq\emptyset. 
		\end{align*}
		Let $y_m'\in B^{X_{eq}}_{r/2}(g_*y_m)\cap Z_*$ for $m=1,2,\ldots,M$. Since $g_*\in \F_m$, it follows that  
		$ \pi_{eq}^{-1}(y_m')\subset \overline{B^X_{r/2}(g_*W_m')} .$
		Thus, by the assumption $0<r<\epsilon_*$, we obtain that
		\begin{equation*}
			\{x_1,x_2,\cdots,x_K\}\subset	E_*\overset{\eqref{5}}\subset  B_{\epsilon_*/2}(\pi_{eq}^{-1}(y_m'))\subset B_{\epsilon_*}^X(g_*W_m'),
		\end{equation*} 
		and hence $$B^X_{\epsilon_*}(x_k)\cap g_*W_m' \neq\emptyset,\text{ for $1\le m\le M,1\le k\le K$.}$$
		For $1\le m\le M,1\le k\le K$, let $W_{m,k}=(g_*^{-1}B^X_{\epsilon_*}(x_k))\cap W_m'$ so that $$g_* W_{m,k}\subset B^X_{\epsilon_*}(x_k),$$
		that is, \eqref{eq:202544} holds.

		Finally, we prove (P1') and (P2') to complete the proof of Claim \ref{C-11}. Indeed, as for each $1\le m\le M,1\le k\le K$,
		$$\pi_{eq}(W_{m,k})\subset\pi_{eq}(W_{m}')\overset{\eqref{eq:201517}}\subset \tilde \pi (\tilde V_m),$$ 
		it follows that there exists $h_{m,k}\in\tilde{V}_m$ such that $\tilde{\pi}(h_{m,k})\in \pi_{eq}(W_{m,k})$.
		Due to \eqref{1}, 	we have \begin{align*}\tilde \nu( \cap_{m=1}^M\cap_{k=1}^K(\tilde Z_*h_{m,k}^{-1}))>0.\end{align*}	
		So,  (P1') and (P2')  hold and the proof is completed.
	\end{proof}

	\section{Weakly mean-sensitive set}\label{s-mean-s-set}
	In this section, we prove Theorem \ref{mainB} and present additional corollaries of Theorem \ref{mainB}. We will continue to use the notations in Section \ref{sec-isp} and Section \ref{IT-ab} .
	\subsection{Proof of Theorem \ref{mainB} (i)} In this subsection, we establish Theorem \ref{main2}, which together with Lemma \ref{lem-1}, implies Theorem \ref{mainB} (i). Meanwhile, we provide some corollaries of  Theorem \ref{main2}.
	\begin{thm}\label{main2}Let $(X,G)$ be a minimal tds  with local Bronstein condition, where  $G$ is amenable. Then  each $E_*\in\mathcal{X}_{eq}^{\text{meas}}$ is a  weakly mean-sensitive set.	
	\end{thm}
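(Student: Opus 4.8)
The plan is to run the single-index ($M=1$) version of the argument behind Claim~\ref{C-11} in the proof of Theorem~\ref{main1}, but instead of extracting one return time I will extract a set of return times of \emph{uniformly} positive upper Banach density, the uniformity coming from the unique ergodicity of $(X_{eq},G)$. First I observe that, as $G$ is amenable, $(X,G)$ admits an invariant probability measure, so together with the local Bronstein hypothesis it has property $(*)$; in particular Lemma~\ref{lem-2} applies. Fix $E_*\in\mathcal{X}_{eq}^{\text{meas}}$, points $x_1,\dots,x_K\in E_*$, and $\epsilon_*>0$; it suffices to produce one constant $\delta=\delta(E_*,\epsilon_*)>0$, independent of $U$, such that $\overline{BD}(\{g\in G:gU\cap B^X_{\epsilon_*}(x_k)\neq\emptyset\ \text{for}\ 1\le k\le K\})>\delta$ for every nonempty open $U\subset X$. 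As in the proof of Theorem~\ref{main1} I set $Z_*=\pi_{\mathcal{X}}(B^{\mathcal{X}}_{\epsilon_*/2}(E_*))$, so that $\nu_{eq}(Z_*)>0$ and $E_*\subset B^X_{\epsilon_*/2}(\pi_{eq}^{-1}(y))$ for every $y\in Z_*$, and I fix once and for all a compact set $Q\subset Z_*$ with $c_1:=\nu_{eq}(Q)>0$; the desired $\delta$ will be $c_1/2$.

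Given $U$, I will use semi-openness of $\pi_{eq}$ to pick $y_1\in X_{eq}$ and $r\in(0,\epsilon_*)$ with $B^{X_{eq}}_r(y_1)\subset\text{int}(\pi_{eq}(U))\subset\pi_{eq}(\overline{U})$, and then apply Lemma~\ref{lem-2}(2) with $\epsilon=r/2$ to obtain that $\mathcal{F}_1:=\{g\in G:\pi_{eq}^{-1}(B^{X_{eq}}_{r/2}(gy_1))\subset\overline{B^X_{r/2}(gU)}\}$ is thickly syndetic, hence thick. Next I put $\mathcal{N}:=\{g\in G:B^{X_{eq}}_{r/2}(gy_1)\cap Z_*\neq\emptyset\}$; since $(X_{eq},G)$ is an isometry, $\mathcal{N}$ coincides with the return-time set $N(y_1,B^{X_{eq}}_{r/2}(Z_*))$. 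I then choose $\rho\in(0,r/2]$ with $\nu_{eq}(\partial B^{X_{eq}}_\rho(Q))=0$ (possible since at most countably many radii can fail this) and set $W:=B^{X_{eq}}_\rho(Q)$, a continuity set with $\nu_{eq}(W)\ge c_1$ and $W\subset B^{X_{eq}}_{r/2}(Z_*)$, so $\mathcal{N}\supset N(y_1,W)$. By unique ergodicity of $(X_{eq},G)$, sandwiching the indicator of $W$ between continuous functions shows $\frac{1}{|F_n|}|\{g\in F_n:gy_1\in W\}|\to\nu_{eq}(W)$ along every F{\o}lner sequence, whence $\underline{BD}(\mathcal{N})\ge\underline{BD}(N(y_1,W))=\nu_{eq}(W)\ge c_1$. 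Finally, thickness of $\mathcal{F}_1$ provides, for a fixed F{\o}lner sequence $\{F_n\}$, elements $g_n$ with $F_ng_n\subset\mathcal{F}_1$; since $\{F_ng_n\}$ is again a F{\o}lner sequence, $\overline{BD}(\mathcal{N}\cap\mathcal{F}_1)\ge\limsup_n\frac{|\mathcal{N}\cap F_ng_n|}{|F_ng_n|}\ge\underline{BD}(\mathcal{N})\ge c_1$.

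It then remains to check $\mathcal{N}\cap\mathcal{F}_1\subset\{g:gU\cap B^X_{\epsilon_*}(x_k)\neq\emptyset\ \text{for all }k\}$. If $g\in\mathcal{N}\cap\mathcal{F}_1$, I pick $y'\in B^{X_{eq}}_{r/2}(gy_1)\cap Z_*$; then $g\in\mathcal{F}_1$ gives $\pi_{eq}^{-1}(y')\subset\overline{B^X_{r/2}(gU)}$, and since $y'\in Z_*$ and $r<\epsilon_*$ this yields $E_*\subset B^X_{\epsilon_*/2}(\pi_{eq}^{-1}(y'))\subset B^X_{\epsilon_*}(gU)$, so each $x_k\in E_*$ satisfies $gU\cap B^X_{\epsilon_*}(x_k)\neq\emptyset$. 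Hence $\overline{BD}(\{g:gU\cap B^X_{\epsilon_*}(x_k)\neq\emptyset\ \forall k\})\ge c_1$ for every $U$, and $\delta:=c_1/2$ works; as $x_1,\dots,x_K\in E_*$ were arbitrary, $E_*$ is a weakly mean-sensitive set. I expect the main obstacle to be precisely this uniformity in $U$: a direct syndeticity bound for $\mathcal{N}$ (as used in Claim~\ref{C-11}) comes with a $U$-dependent constant, and the key new point is to route the density estimate through a single fixed positive-measure continuity set in the uniquely ergodic factor $X_{eq}$ and then combine its lower Banach density with the thick set $\mathcal{F}_1$ supplied by the interior saturation property (Proposition~\ref{prop-2}).
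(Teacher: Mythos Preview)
Your proposal is correct and follows essentially the same route as the paper's proof: both set $Z_*=\pi_{\mathcal X}(B^{\mathcal X}_{\epsilon_*/2}(E_*))$, invoke Lemma~\ref{lem-2}(2) to get a thickly syndetic (hence thick) set $\mathcal F_1$, use unique ergodicity of $(X_{eq},G)$ to bound $\underline{BD}(\mathcal N)$ below by a constant depending only on $E_*$ and $\epsilon_*$, and then intersect along a F{\o}lner sequence lying inside $\mathcal F_1$ (your right-translation argument is exactly the content of Lemma~\ref{4.1}). The only cosmetic difference is that you route the density estimate through an auxiliary compact $Q\subset Z_*$ and a continuity set $W=B^{X_{eq}}_\rho(Q)$, whereas the paper uses directly that $B^{X_{eq}}_{r/2}(Z_*)$ is open and hence $\underline{BD}\bigl(N(y_*,B^{X_{eq}}_{r/2}(Z_*))\bigr)\ge\nu_{eq}(Z_*)$ by unique ergodicity; both arguments yield a $U$-independent $\delta$.
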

	Before proving the main theorem in this subsection, we establish the following lemma.
	\begin{lem}\label{4.1}Let $G$ be  amenable, and $F\in \mathcal{F}_t(G)$. Then there exists a F{\o}lner sequence $\{F_n\}_{n=1}^\infty$ of $G$ such that $\cup_{n\in\N}F_n\subset F$.
	\end{lem}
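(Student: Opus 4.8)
The plan is to produce the desired F{\o}lner sequence by pushing an arbitrary one into $F$ using thickness. Since $G$ is amenable, I would first fix any F{\o}lner sequence $\{E_n\}_{n=1}^\infty$ of $G$. For each $n\in\N$ the set $E_n$ is finite, so the definition of a thick set supplies some $g_n\in G$ with $E_ng_n\subset F$. Setting $F_n:=E_ng_n$, the inclusion $\cup_{n\in\N}F_n\subset F$ is then immediate, and $|F_n|=|E_n|$ for every $n$.

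It remains to verify that $\{F_n\}_{n=1}^\infty$ is still a F{\o}lner sequence, and here the key point is that the (left) F{\o}lner condition is preserved under right translation. Concretely, for any $g\in G$ one checks that $x\in gE_ng_n\,\Delta\,E_ng_n$ if and only if $xg_n^{-1}\in gE_n\,\Delta\,E_n$, so that $gF_n\,\Delta\,F_n=(gE_n\,\Delta\,E_n)g_n$; since right multiplication by $g_n$ is a bijection of $G$, this gives $|gF_n\,\Delta\,F_n|=|gE_n\,\Delta\,E_n|$. Consequently
$$\lim_{n\to\infty}\frac{|gF_n\,\Delta\,F_n|}{|F_n|}=\lim_{n\to\infty}\frac{|gE_n\,\Delta\,E_n|}{|E_n|}=0\qquad\text{for every }g\in G,$$
which is exactly the F{\o}lner property required of $\{F_n\}$.

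This argument is short and I do not expect a genuine obstacle: the only step needing a line of justification is the identity $gF_n\,\Delta\,F_n=(gE_n\,\Delta\,E_n)g_n$, i.e.\ the observation that translating F{\o}lner sets on the right keeps them F{\o}lner (with the same cardinalities). Everything else is a direct invocation of the definitions of amenability and of a thick set.
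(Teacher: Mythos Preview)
Your proof is correct and follows essentially the same approach as the paper: fix any F{\o}lner sequence, use thickness to right-translate each term into $F$, and observe that right translates of a F{\o}lner sequence remain F{\o}lner. The paper states this last observation as a fact without the explicit computation, whereas you spell out the identity $gF_n\,\Delta\,F_n=(gE_n\,\Delta\,E_n)g_n$; otherwise the arguments are identical.
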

	\begin{proof} By definition, it is straightforward to verify the fact that for any F{\o}lner sequence $\{\hat F_n\}_{\in\N}$ of $G$ and any sequence $\{g_n\}_{n\in\N}$ in $G$, $\{\hat F_ng_n\}_{\in\N}$ is also a F{\o}lner sequence.
		
		Now we fix an F{\o}lner sequence $\{\hat F_n\}_{n=1}^\infty$  of $G$. Since $F\in \mathcal{F}_t(G)$, there exists $g_n\in G$ such that $\hat F_ng_n\subset F$ for each $n\in\N$. By the above fact, $\{\hat{F}_n g_n\}_{n=1}^\infty$ is the required F{\o}lner sequence.
	\end{proof}
	Now we prove  Theorem \ref{main2}.
	\begin{proof}[Proof of  Theorem \ref{main2}]Given $ E_*\in\mathcal{X}_{eq}^{\text{meas}}$, let $K\in\N$ and $x_1,x_2,\cdots,x_K\in E_*$.  
		To show  $E_*$ is a  weakly mean-sensitive set, it is sufficient to show that for any $\epsilon_*>0$,	there exists $\delta>0$ such that for any nonempty open set $U\subset X$,	$$\overline{BD}\{g\in G: (gU)\cap B^X_{\epsilon_*}(x_k)\neq\emptyset\text{ for }k=1,2,\cdots,K\}\ge \delta.$$
		
Since $E_* \in \mathcal{X}_{eq}^{\text{meas}}$, the set
$
Z_* := \pi_\mathcal{X}(B_{\epsilon_*/2}^\mathcal{X}(E_*))
$
has positive $\nu_{eq}$-measure. In particular,
\begin{align}\label{08}
	E_* \subset B^X_{\epsilon_*/2}(\pi_{eq}^{-1}(y)) ,~ \text{for every } y \in Z_*.
\end{align}
In the following, we will show that $\nu_{eq}(Z_*) > 0$ serves as the desired $\delta$.

		Now we fix a nonempty open set $U\subset X$. Since $\pi_{eq}$ is semi-open, we can find $y_*\in X_{eq}$ and $r\in(0,\frac{\epsilon_*}{2})$ such that $B_{r}^{X_{eq}}(y_*)\subset \pi_{eq}(U)$.
		Since $(X_{eq},G)$ is an isometry, one has $B_{r/2}^{X_{eq}}(gy_*)\cap Z_*\neq\emptyset$ if and only if $gy_*\in B^{X_{eq}}_{r/2}(Z_*)$. Due to the unique ergodicity of $(X_{eq},G)$, 
		\begin{align}\label{q-0}\underline{BD}\{g\in G:B_{r/2}^{X_{eq}}(gy_*)\cap Z_*\neq\emptyset \}=\underline{BD}\{g\in G:gy_*\in B^{X_{eq}}_{r/2}(Z_*)\}\ge \nu_{eq}(Z_*).
		\end{align}
		By Lemma \ref{4.1} and Lemma \ref{lem-2} (2), 
		\begin{align}\label{eq3}\overline{BD}\{g\in G: \pi_{eq}^{-1}(B^{X_{eq}}_{r/2}(gy_*))\subset \overline{B^X_{r}(gU)}\}=1.
		\end{align}
		Let $$\mathcal{N}=\{g\in G: B_{r/2}^{X_{eq}}(gy_*)\cap Z_*\neq\emptyset\text{ and } \pi_{eq}^{-1}(B^{X_{eq}}_{r/2}(gy_*))\subset \overline{B^X_{r}(gU)}\}.$$ Together with \eqref{q-0} and \eqref{eq3}, one has
		\begin{align}\label{N}\overline{BD}(\mathcal{N})\ge \nu_{eq}(Z_*).\end{align}
		Notice that for every $g\in \mathcal{N}$, there exists $y(g)\in Z_*$ such that $\pi_{eq}^{-1}(y(g))\subset \overline{B^X_{r}(gU)}$. Since $r\in(0,\epsilon_*/2)$, the above implies
		$$E_*\overset{\eqref{08}}\subset B^X_{\epsilon_*/2}(\pi^{-1}_{eq}(y(g)))\subset B^X_{\epsilon_*/2}( \overline{B^X_{r}(gU)})\subset  B^X_{\epsilon_*}(gU).$$
		Noting that $x_k\in E_*$, one has 	for every $k\in\{1,2,\cdots,K\}$ and $g\in \mathcal{N}$,
		$$ gU\cap B_{\epsilon_*}(x_k)\neq\emptyset. $$
		By \eqref{N}, $(x_1,\cdots,x_K)$ is a  weakly mean-sensitive tuple, and so $E_*$ is a  weakly mean-sensitive set.
	\end{proof}
	
	The following corollary can be proved using exactly the same argument as in Corollary \ref{cor-B1}, with Theorem \ref{main1} replaced by Theorem \ref{main2}. So we omit the proof.
	\begin{cor}\label{cor-S1}Let $(X,G)$ be a minimal tds with local Bronstein condition, where $G$ is amenable.
		Then, the following statements hold:
		\begin{enumerate}[(i)]
			\item  If $(X,G)$ has no essential  weakly mean-sensitive $K$-tuples for some $K\ge 2$,  then $(X,G)$ is regular $K'$ to one for some $1\le K'\le K-1$.
			\item If $\pi_{eq}$ is open, then $\pi_{eq}^{-1}(y)$ is a  weakly mean-sensitive set for every $y\in X_{eq}$.
			\item Let $N\in\mathbb{N}\cup\{\infty\}$ and $ K\in\mathbb{N}$ with $2\le K\le N$. 	If $\pi_{eq}$ is almost $N$ to one, then for every $y\in X_{eq}$, $\pi_{eq}^{-1}(y)$ contains essential  weakly mean-sensitive $K$-tuples.
			\item If $(X,G)$ has no essential  weakly mean-sensitive $K$-tuple for some $K\ge 2$, then $|\mathcal{M}^e(X,G)|\le K-1$.
		\end{enumerate}
	\end{cor}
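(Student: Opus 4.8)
\textbf{Proof proposal for Corollary~\ref{cor-S1}.}

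The plan is to follow the proof of Corollary~\ref{cor-B1} essentially line by line, replacing Theorem~\ref{main1} by Theorem~\ref{main2}. The inputs are: $\pi_{eq}^{-1}(y)$ is a weakly mean-sensitive set for every $y$ in the dense $G_\delta$ full-measure set $Y$ of Lemma~\ref{lem-1} (immediate from Theorem~\ref{main2} and the definition of $Y$); the cardinality bound $|E|\ge N$ on $\mathcal{X}$ of Lemma~\ref{lem-1.2}; Lemma~\ref{lem-per-xmeas}(1); the upper semicontinuity of $\pi_{eq}^{-1}$ (Lemma~\ref{usc}); the ergodicity of $\nu_{eq}$ (valid since $(X_{eq},G)$ is uniquely ergodic); and one routine fact, namely that the set of weakly mean-sensitive $K$-tuples is closed in $X^K$. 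The last is immediate from the definition: if $(x_k^{(n)})_k\to(x_k)_k$ with every $(x_k^{(n)})_k$ weakly mean-sensitive, then for given open neighbourhoods $U_k\ni x_k$ one has $x_k^{(n)}\in U_k$ for large $n$, so $U_1,\dots,U_K$ are also open neighbourhoods of $x_1^{(n)},\dots,x_K^{(n)}$, and the $\delta$ guaranteed for the $n$-th tuple serves for $(x_k)_k$ as well.

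Parts (i), (ii), (iv) then go through essentially as in Corollary~\ref{cor-B1}. For (i), $\pi_{eq}^{-1}(y)$ is a weakly mean-sensitive set for $y\in Y$, so the absence of essential weakly mean-sensitive $K$-tuples forces $|\pi_{eq}^{-1}(y)|\le K-1$ for all $y\in Y$; since $y\mapsto|\pi_{eq}^{-1}(y)|$ is Borel (its super-level sets are $F_\sigma$) and $G$-invariant, ergodicity of $\nu_{eq}$ makes it $\nu_{eq}$-a.e.\ equal to a constant $K'\in\{1,\dots,K-1\}$, i.e.\ $\pi_{eq}$ is regular $K'$ to one. For (ii), when $\pi_{eq}$ is open Lemma~\ref{lem-per-xmeas}(1) gives $\mathcal{X}_{eq}^{\text{meas}}=\{\pi_{eq}^{-1}(y):y\in X_{eq}\}$, so Theorem~\ref{main2} applies to every fibre. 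For (iv), assume there are distinct ergodic measures $\mu_1,\dots,\mu_K$ and pick pairwise disjoint Borel sets $V_k$ with $\mu_k(V_k)\ge 1-\tfrac{1}{K+1}$; since $(\pi_{eq})_*\mu_k=\nu_{eq}$ by unique ergodicity of $X_{eq}$, we get $\nu_{eq}(\pi_{eq}(V_k))\ge 1-\tfrac{1}{K+1}$ and hence $\nu_{eq}(\cap_{k=1}^K\pi_{eq}(V_k))\ge\tfrac{1}{K+1}>0$; by Theorem~\ref{main2} and Lemma~\ref{lem-1} there is $y_*\in\cap_{k=1}^K\pi_{eq}(V_k)$ with $y_*\in Y$, and any $z_k\in\pi_{eq}^{-1}(y_*)\cap V_k$ give $K$ distinct points whose tuple is an essential weakly mean-sensitive $K$-tuple, a contradiction.

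The only part needing a genuinely new observation is (iii): Theorem~\ref{main2} controls only the fibres over the generic set $Y$, whereas (iii) asks about every $y\in X_{eq}$, and since $\pi_{eq}^{-1}$ is merely upper semicontinuous an arbitrary fibre need not be a Hausdorff limit of fibres over $Y$. I would argue as follows. By Lemma~\ref{lem-1.2}, $|E|\ge N\ge K$ for every $E\in\mathcal{X}$; the map $E\mapsto\max\{\min_{1\le i<j\le K}d(x_i,x_j):x_1,\dots,x_K\in E\}$ is lower semicontinuous on $2^X$ and strictly positive on the compact set $\mathcal{X}$, so it has a positive minimum $\eta_0$ there, i.e.\ every $E\in\mathcal{X}$ contains $K$ points pairwise $\eta_0$-separated. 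Fix $y_0\in X_{eq}$ and choose $y_n\in Y$ with $y_n\to y_0$; in each $\pi_{eq}^{-1}(y_n)$ pick $K$ points $x_1^{(n)},\dots,x_K^{(n)}$ pairwise $\eta_0$-separated, which form an essential weakly mean-sensitive $K$-tuple by Theorem~\ref{main2}, and pass to a subsequence with $x_k^{(n)}\to x_k$ for each $k$. Then $\pi_{eq}(x_k)=\lim_n y_n=y_0$, the $x_k$ remain pairwise $\eta_0$-separated (hence distinct), and by closedness of the set of weakly mean-sensitive $K$-tuples $(x_1,\dots,x_K)$ is such a tuple; thus $\pi_{eq}^{-1}(y_0)$ contains an essential weakly mean-sensitive $K$-tuple.

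The main obstacle is exactly this passage from generic fibres to all fibres in (iii); it is resolved by extracting from the compactness of $\mathcal{X}$ and the uniform bound $|E|\ge N$ a single separation constant $\eta_0$, which keeps the approximating essential tuples from degenerating in the limit. Everything else is a faithful transcription of the proof of Corollary~\ref{cor-B1}.
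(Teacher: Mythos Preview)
Your proposal is correct, and for parts (i), (ii), (iv) it is exactly the paper's argument: the paper simply says ``same as Corollary~\ref{cor-B1} with Theorem~\ref{main1} replaced by Theorem~\ref{main2}'', and that is what you do.

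For (iii) your route is correct but genuinely different from the one the paper has in mind. The paper's implicit argument (the one behind ``Theorem~\ref{main1} and Lemma~\ref{lem-1.2}'') does not pass through a limiting procedure at all: since $\mathcal{X}_{eq}^{\text{meas}}$ is closed, $G$-invariant and nonempty, and $(X_{eq},G)$ is minimal, the image $\pi_{\mathcal{X}}(\mathcal{X}_{eq}^{\text{meas}})$ is all of $X_{eq}$; so for \emph{every} $y\in X_{eq}$ there is some $E\in\mathcal{X}_{eq}^{\text{meas}}$ with $E\subset\pi_{eq}^{-1}(y)$, and then Lemma~\ref{lem-1.2} gives $|E|\ge N\ge K$ while Theorem~\ref{main2} makes $E$ a weakly mean-sensitive set, producing the essential $K$-tuple inside $\pi_{eq}^{-1}(y)$ directly. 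Your approach instead works only with the generic fibres over $Y$, extracts a uniform separation constant $\eta_0$ from the compactness of $\mathcal{X}$ and Lemma~\ref{lem-1.2}, and then uses the (easy but extra) fact that the set of weakly mean-sensitive $K$-tuples is closed to pass to the limit. Both are short; the paper's version is slightly cleaner because it never leaves $\mathcal{X}_{eq}^{\text{meas}}$ and needs no closure property of tuples, while yours has the virtue of isolating the closedness of weakly mean-sensitive tuples as a standalone fact.
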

	
	\subsection{Proof of Theorem \ref{mainB} (ii) and (iii)}
	To prove Theorem \ref{mainB} (ii) and (iii), we need the following lemma.
	\begin{lem}\label{ss}
		Let $(X,G)$ be a minimal tds, where $G$ is amenable. If a $K$-tuple $(x_1,\dots,x_K)\in X^K$ is a  weakly mean-sensitive tuple, then for any open neighborhood $U_k$ of $x_k$ for $k=1,2,\cdots,K$, one has $\nu_{eq}(\cap_{k=1}^K\pi_{eq}(U_k))>0$.
	\end{lem}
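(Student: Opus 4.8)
The plan is to unwind the definition of a weakly mean-sensitive tuple and push the resulting positive-density information down to the equicontinuous factor, where unique ergodicity converts density into measure. Concretely, fix open neighborhoods $U_k$ of $x_k$, $k=1,\dots,K$. By Definition~\ref{defn-mean-sensitive} there is $\delta>0$ such that for \emph{every} nonempty open $U\subset X$ the set $N_U:=\{g\in G:(gU)\cap U_k\neq\emptyset\text{ for }1\le k\le K\}$ has $\overline{BD}(N_U)>\delta$. Since $(X,G)$ is minimal we may take $U$ to be any nonempty open set we like; the idea is to choose $U$ so small that its orbit information about hitting all the $U_k$ is essentially controlled by the factor $X_{eq}$.

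First I would choose a point $y_0\in X_{eq}$ and a small radius $r>0$, and set $U=\mathrm{int}\big(\pi_{eq}^{-1}(B^{X_{eq}}_r(y_0))\big)$ intersected with some fixed nonempty open set; more usefully, shrink things so that if $gU$ meets $U_k$ then $gy$ lies within a small ball of $\pi_{eq}(U_k)$ for the relevant $y\in B^{X_{eq}}_r(y_0)$. Since $\pi_{eq}$ is continuous, $g U\cap U_k\neq\emptyset$ forces $g\big(B^{X_{eq}}_r(y_0)\big)\cap \pi_{eq}(U_k)\neq\emptyset$, hence $g y_0\in B^{X_{eq}}_{r'}\big(\pi_{eq}(U_k)\big)$ for a slightly enlarged radius $r'$ depending on $r$ and the uniform continuity of the $G$-action on $X_{eq}$ (recall $(X_{eq},G)$ is an isometry, so this enlargement is completely uniform). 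Therefore $N_U\subset\bigcap_{k=1}^K N\big(y_0,\,B^{X_{eq}}_{r'}(\pi_{eq}(U_k))\big)$, and from $\overline{BD}(N_U)>\delta$ we get $\overline{BD}\big(\bigcap_k N(y_0,B^{X_{eq}}_{r'}(\pi_{eq}(U_k)))\big)>\delta$.

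Now I would invoke unique ergodicity of $(X_{eq},G)$: for the indicator-type functions built from the open sets $B^{X_{eq}}_{r'}(\pi_{eq}(U_k))$, the Birkhoff averages along any F\o lner sequence converge (on the level of upper/lower bounds via Urysohn functions) to $\nu_{eq}$ of (the closure of) the intersection $\bigcap_k B^{X_{eq}}_{r'}(\pi_{eq}(U_k))$. Concretely, a standard unique-ergodicity estimate gives
$$\overline{BD}\Big(\bigcap_{k=1}^K N\big(y_0,B^{X_{eq}}_{r'}(\pi_{eq}(U_k))\big)\Big)\le \nu_{eq}\Big(\overline{\bigcap_{k=1}^K B^{X_{eq}}_{r'}(\pi_{eq}(U_k))}\Big),$$
so $\nu_{eq}\big(\overline{\bigcap_k B^{X_{eq}}_{r'}(\pi_{eq}(U_k))}\big)\ge\delta>0$. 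Letting $r\searrow 0$ (hence $r'\searrow 0$) and using that $\pi_{eq}(U_k)$ is open in $X_{eq}$ together with outer regularity of $\nu_{eq}$, the right-hand side decreases to $\nu_{eq}\big(\bigcap_k \pi_{eq}(U_k)\big)$; this limit is still $\ge\delta>0$, which is exactly the desired conclusion.

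The main obstacle I anticipate is the bookkeeping in the second step: making precise the claim that ``$gU$ meets $U_k$'' implies ``$gy_0$ is near $\pi_{eq}(U_k)$'' with a radius $r'$ that can be taken uniformly in $g$ and shrinks with $r$. This is where one genuinely uses that $(X_{eq},G)$ can be taken to be an isometry — otherwise the enlargement constant could blow up along the orbit. A secondary technical point is the passage from $\overline{BD}$ of a subset of $G$ to a $\nu_{eq}$-measure via unique ergodicity: one must phase this through continuous Urysohn functions squeezed between indicators of the open set and of a slightly larger closed set, since $\overline{BD}$ is a sup over F\o lner sequences and one needs the estimate to hold for each of them simultaneously. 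Both points are routine but require care; everything else (continuity of $\pi_{eq}$, openness of $\pi_{eq}(U_k)$ from semi-openness, the final limit $r\searrow 0$) is straightforward.
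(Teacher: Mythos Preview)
Your overall strategy is sound and in fact more direct than the paper's. The paper argues by contradiction: it fixes $U_k'$ with $\overline{U_k'}\subset U_k$, assumes $\nu_{eq}(\cap_k\pi_{eq}(\overline{U_k'}))<\delta_*$, and then proves a separate compactness claim (that any sufficiently small ball meeting every $\pi_{eq}(\overline{U_k'})$ must already meet a fixed neighborhood of $\cap_k\pi_{eq}(\overline{U_k'})$) to squeeze the density below $\delta_*$. Your route---take $U\subset\pi_{eq}^{-1}(B_r^{X_{eq}}(y_0))$, note that $gU\cap U_k\neq\emptyset$ forces $gy_0\in B_r^{X_{eq}}(\pi_{eq}(U_k))$ via the isometry on $X_{eq}$, and bound $\overline{BD}$ by $\nu_{eq}$ of a closed neighborhood through unique ergodicity and Urysohn functions---avoids that auxiliary claim entirely and is cleaner. (Incidentally, no enlargement is needed: $r'=r$ already works, since $X_{eq}$ is isometric.)

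There is, however, a genuine gap in your final limit step. You assert that ``$\pi_{eq}(U_k)$ is open in $X_{eq}$ from semi-openness'', but this is false: semi-openness of $\pi_{eq}$ only says $\pi_{eq}(U_k)$ has nonempty interior, not that it is open. As a consequence, as $r\searrow 0$ the nested closed sets $\overline{\bigcap_k B_r^{X_{eq}}(\pi_{eq}(U_k))}$ decrease to $\bigcap_k\overline{\pi_{eq}(U_k)}$, not to $\bigcap_k\pi_{eq}(U_k)$, and outer regularity does not bridge this. The fix is easy and is exactly the shrinking step the paper performs at the outset: first choose open $U_k'$ with $x_k\in U_k'$ and $\overline{U_k'}\subset U_k$, run your entire argument with the $U_k'$ in place of the $U_k$, and at the end use continuity of $\pi_{eq}$ to get $\overline{\pi_{eq}(U_k')}\subset\pi_{eq}(\overline{U_k'})\subset\pi_{eq}(U_k)$. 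Your limit then gives $\nu_{eq}\big(\bigcap_k\overline{\pi_{eq}(U_k')}\big)\ge\delta$, hence $\nu_{eq}\big(\bigcap_k\pi_{eq}(U_k)\big)\ge\delta>0$, as desired.
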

	\begin{proof} Choose an  open neighborhood  $U_k'$ of $x_k$ with $\overline{U_k'}\subset U_k$ for $k=1,2,\cdots,K$.  By definition of   weakly mean-sensitive tuple, there is $\delta_*>0$ such that for any nonempty open set $U\subset X$,
		$$\overline{BD}\{g\in G: U_k'\cap g U\neq\emptyset\text{ for }k=1,2\cdots,K\}\ge \delta_*.$$
		This implies that for any nonempty open set $V\subset X_{eq}$,
		\begin{align}\label{eq-24}
			\overline{BD}\{g\in G: \pi_{eq}(U_k')\cap gV\neq\emptyset\text{ for }k=1,2\cdots,K\}\ge \delta_*.
		\end{align}
		
		We will prove that $\nu_{eq}(\cap_{k=1}^K\pi_{eq}(U_k))\ge\delta_*>0$, which proves Lemma \ref{ss}. Suppose, for the sake of contradiction, that
		$\nu_{eq}(\cap_{k=1}^K\pi_{eq}(U_k))<\delta_*,$
		and hence
		\begin{align}\label{eq1}\nu_{eq}(\cap_{k=1}^K\pi_{eq}(\overline{U_k'}))<\delta_*.
		\end{align}
	Since $(x_1,\dots,x_K)$ is a   weakly mean-sensitive tuple, it follows that	$\pi_{eq}(x_1)=\cdots=\pi_{eq}(x_K)$. And so, $\cap_{k=1}^K\pi_{eq}(\overline{U_k'})$ is not empty.	Since $\cap_{k=1}^K\pi_{eq}(\overline{U_k'})$ is compact and nonempty, by \eqref{eq1}, and a standard argument, there exists $\epsilon>0$ such that 
		\begin{align}\label{eq2}
			\nu_{eq}\left(\overline{B_\epsilon^{X_{eq}}(\cap_{k=1}^K
				\pi_{eq}(\overline{U_k'}))}\right)<\delta_*.\end{align}
		
		\begin{cl}\label{c-8}There exists $\epsilon_*>0$ such that for every $\epsilon_*'\in(0,\epsilon_*)$ and $y\in X_{eq}$,
			if $B_{\epsilon_*'}^{X_{eq}}(y)\cap \pi_{eq}(\overline{U_k'})\neq\emptyset$ for all  $k=1,2,\cdots,K$, then 
			$$ B_{\epsilon_*'}^{X_{eq}}(y)\cap B_\epsilon^{X_{eq}}(\cap_{k=1}^K
			\pi_{eq}(\overline{U_k'}))\neq\emptyset.$$
		\end{cl}
		\begin{proof}[Proof of Claim \ref{c-8}]If Claim \ref{c-8} does not hold, we can find $\epsilon_n\searrow 0$ as $n\to\infty$ and $\{y_n\}_{n\in\N}\subset X_{eq}$ such that 
			$B_{\epsilon_n}^{X_{eq}}(y_n)\cap \pi_{eq}(\overline{U_k'})\neq\emptyset$ for $k=1,2,\cdots,K$ and 
			\begin{align}\label{eq0}B_{\epsilon_n}^{X_{eq}}(y_n)\cap B_\epsilon^{X_{eq}}(\cap_{k=1}^K
				\pi_{eq}(\overline{U_k'}))=\emptyset.
			\end{align} 
			Denote $B_k= \pi_{eq}(\overline{U_k'})\setminus B_\epsilon^{X_{eq}}(\cap_{k=1}^K
			\pi_{eq}(\overline{U_k'}))$ for $k=1,2,\cdots,K$. It is clear that 
			\begin{equation}\label{eq:123}
			\cap_{k=1}^K B_k = \emptyset.
			\end{equation}
			By \eqref{eq0},  $B_{\epsilon_n}^{X_{eq}}(y_n)\cap \pi_{eq}(\overline{U_k'})\subset B_k$.	Choose $y_n^{(k)}\in B_{\epsilon_n}^{X_{eq}}(y_n)\cap \pi_{eq}(\overline{U_k'})\subset B_k$ for $n\in\N$ and $k=1,2,\cdots,K$.  Passing to a subsequence, we assume that  $y_n^{(k)}\to y^{(k)}$ as $n\to\infty$ for $k=1,2,\cdots,K$. Since  $y_n^{(k)}\in B_k$ and $B_k$ is  compact, one has $y^{(k)}\in B_k$  for $k=1,2,\cdots,K$. Noting that $\epsilon_n\searrow 0$ as $n\to\infty$ and $\{y_n^{(k)}:k=1,2,\cdots,K\}\subset B_{\epsilon_n}^{X_{eq}}(y_n)$, one has
			$$y^{(1)}=y^{(2)}=\cdots=y^{(K)}.$$
			Thus, $\cap_{k=1}^K B_k \neq \emptyset$, which contradicts \eqref{eq:123}. Hence, Claim \ref{c-8} holds.
		\end{proof}
		Let $\epsilon_*>0$ be as in Claim \ref{c-8}. By \eqref{eq2} and a standard argument, we can find $\epsilon_*'\in(0,\epsilon_*)$ such that 
		\begin{align}\label{eq-012}
			\nu_{eq}(\partial(B_{\epsilon_*'+\epsilon}^{X_{eq}}(\cap_{k=1}^K
			\pi_{eq}(\overline{U_k'}))))=0,\end{align}
		and 
		\begin{align}\label{eq-22}	\nu_{eq}(B_{\epsilon_*'+\epsilon}^{X_{eq}}(\cap_{k=1}^K
			\pi_{eq}(\overline{U_k'})))<\delta_*.
		\end{align}
		Fixing $y_*\in X_{eq}$,  by the unique ergodicity of $(X_{eq}, G)$, \eqref{eq-012} and \eqref{eq-22}, we have
		\begin{align}\label{eq-23}
			\overline{BD}\{g\in G: gy_*\in B_{\epsilon_*'+\epsilon}^{X_{eq}}(\cap_{k=1}^K
			\pi_{eq}(\overline{U_k'}))\}<\delta_*. 
		\end{align}
		By Claim \ref{c-8}, if $B^{X_{eq}}_{\epsilon_*'}(y)\cap \pi_{eq}(\overline{U_k'})\neq\emptyset$ for all $k=1,2,\cdots,K$, then 
		$$ B_{\epsilon_*'}^{X_{eq}}(y)\cap B_\epsilon^{X_{eq}}(\cap_{k=1}^K
		\pi_{eq}(\overline{U_k'}))\neq\emptyset,$$
		and thus 
		$$y\in B^{X_{eq}}_{\epsilon_*'}(y)\subset B_{\epsilon_*'+\epsilon}^{X_{eq}}(\cap_{k=1}^K
		\pi_{eq}(\overline{U_k'})).$$
		Combining this with \eqref{eq-23} and the fact $(X_{eq},G)$ is an isometry, one has
		\begin{align*}
			&\overline{BD}\{g\in G: gB^{X_{eq}}_{\epsilon_*'}(y_*)\cap 
			\pi_{eq}(\overline{U_k'})\neq\emptyset\text{ for }k=1,2,\cdots,K\}\\
			&=\overline{BD}\{g\in G: B^{X_{eq}}_{\epsilon_*'}(gy_*)\cap 
			\pi_{eq}(\overline{U_k'})\neq\emptyset\text{ for }k=1,2,\cdots,K\}\\
			& \le \overline{BD}\{g\in G: gy_*\in B_{\epsilon_*'+\epsilon}^{X_{eq}}(\cap_{k=1}^K
			\pi_{eq}(\overline{U_k'}))\}\\
			&<\delta_*. 
		\end{align*}
	This contradicts \eqref{eq-24} (with $V = B^{X_{eq}}_{\epsilon_*'}(y_*)$), completing the proof of Lemma \ref{ss}.
	\end{proof}
	
	Now we prove Theorem \ref{mainB} (ii). 
	\begin{proof}[Proof of Theorem \ref{mainB} (ii)] Assume that $\pi_{eq}$ is regular $K$ to one. By Theorem \ref{mainB} (i), $(X,G)$ has essential  weakly mean-sensitive $K$-tuples. Now we show that $(X,G)$ does not have essential weakly mean-sensitive  $(K+1)$-tuples. 
		
	Suppose, for a contradiction, that there is an essential weakly mean-sensitive $(K+1)$-tuple  $(x_1,\cdots,x_{K+1})$.  We choose $\epsilon>0$ sufficiently small such that the sets $B^X_\epsilon(x_1),\cdots,B^X_{\epsilon}(x_{K+1})$ are pairwise disjoint. By  Lemma \ref{ss},
		\begin{align}\label{eq-010}\nu_{eq}(\cap_{k=1}^{K+1}\pi_{eq}(B^X_\epsilon(x_k)))>0.
		\end{align}
Thus,
\[
\nu_{eq}(\{y \in X_{eq} : |\pi_{eq}^{-1}(y)| = K+1\}) \ge \nu_{eq}\left(\cap_{k=1}^{K+1} \pi_{eq}(B^X_\epsilon(x_k))\right) > 0.
\]
This contradicts the assumption that $\pi_{eq}$ is regular $K$ to one. 
	\end{proof}
	
	Finally, we prove Theorem \ref{mainB} (iii).
	\begin{proof}[Proof of Theorem \ref{mainB} (iii)]Let $E_*$ be a  weakly mean-sensitive set  of $(X,G)$. Let $K\in\N$ and  $x_1,\cdots,x_K\in E_*$. 
		Let  $U_k$ be an open neighborhood of $x_k$ for $k=1,2,\cdots,K$. By Lemma \ref{ss}, $$\nu_{eq}(\cap_{k=1}^K\pi_{eq}(U_k))>0.$$ By Theorem \ref{main1'}, there exists $\tilde y\in \cap_{k=1}^K\pi_{eq}(U_k)$ such that $\pi^{-1}_{eq}(\tilde y)$ is an IT-set. By taking $\tilde x_k\in U_k\cap \pi^{-1}_{eq}(\tilde y)$ for $k=1,2,\cdots,K$, we have a $K$-IT-tuple $(\tilde x_1,\cdots,\tilde x_K)$, and so $(U_1,\cdots,U_K)$ has infinite independence sets.  Thus, by the arbitrariness of $(U_1,\cdots,U_K)$, one has $( x_1,\cdots, x_K)$ is an IT-tuple, and so $E_*$ is an IT-set.
	\end{proof}

	\noindent{\bf Acknowledgment.} 	The authors thank Professor Garc\'{\i}a-Ramos for his many valuable suggestions on earlier versions of this paper.
This paper is  supported by National Key R\&D Program of China (No. 2024YFA1013602, 2024YFA1013600) and National Natural Science Foundation of China grants (12031019, 12371197, 12426201).
	
\end{document}